\documentclass[12pt]{article}
\RequirePackage[OT1]{fontenc}
\RequirePackage{amsthm,amsmath}
\usepackage{mathrsfs}
\usepackage{cite}
\RequirePackage[numbers]{natbib}

\usepackage{amsfonts,amssymb,amsbsy,latexsym,graphicx}
\usepackage{psfrag,epsfig,epsf} 
\usepackage{subfigure}
\usepackage{color}
\usepackage{verbatim}
\usepackage{comment}
\usepackage[margin=2cm]{geometry}
\usepackage{cleveref}





\newcommand{\Expect}[1]{\mathbb{E} \left[{#1}\right]}

\newcommand{\Var}[1]{\mbox{Var} \left[{#1}\right]}

\newcommand{\Corr}[1]{\mbox{Corr} \left[{#1}\right]}
\newcommand{\Cov}[1]{\mbox{Cov} \left[{#1}\right]}
\newcommand{\Prob}[1]{\mathbb{P} \left({#1}\right)}

\newcommand{\Abs}[1]{\left\vert{#1}\right\vert}


\newcommand{\md}{\mbox{d}}



\newcommand{\pih}{\widehat{\pi}}

\newcommand{\bma}{\mathbf{a}}

\newcommand{\bmc}{\mathbf{c}}
\newcommand{\bmC}{\mathbf{C}}

\newcommand{\bmF}{\mathbf{F}}

\newcommand{\bmh}{\mathbf{h}}

\newcommand{\bmI}{\mathbf{I}}

\newcommand{\bmm}{\mathbf{m}}

\newcommand{\bmS}{\mathbf{S}}

\newcommand{\bmu}{\mathbf{u}}
\newcommand{\bmU}{\mathbf{U}}

\newcommand{\bmV}{\mathbf{V}}

\newcommand{\bmx}{\mathbf{x}}
\newcommand{\bmX}{\mathbf{X}}
\newcommand{\bmy}{\mathbf{y}}
\newcommand{\bmY}{\mathbf{Y}}
\newcommand{\bmz}{\mathbf{z}}
\newcommand{\bmZ}{\mathbf{Z}}

\newcommand{\bmzero}{\mathbf{0}}


\newcommand{\bmepsilon}{\mbox{\boldmath$\epsilon$}}

\newcommand{\bmSigma}{\mbox{\boldmath$\Sigma$}}




\newcommand{\PP}{{\sf P\hspace*{-0.9ex} \rule{0.15ex}{1.5ex}\hspace*{0.9ex}}}

\newcommand{\X}{\mathcal{X}}

\newcommand{\U}{\mathcal{U}}

\newcommand{\loc}{\mathrm{loc}}
\newcommand{\scale}{\mathrm{scale}}

\newcommand{\aux}{\gamma}
\newcommand{\Aux}{\Gamma}

\newcommand{\floor}[1]{\lfloor {#1} \rfloor}
\newcommand{\FF}{\mathcal F}
\newcommand{\EE}{\mathbb{E}}
\renewcommand{\PP}{\mathbb{P}}
\newcommand{\OO}{\mathcal{O}}
\newcommand{\Normal}[1]{\textbf{N}\left( #1 \right)}
\newcommand{\RR}{\mathbb{R}}
\newcommand{\LL}{\mathscr{L}}

\renewcommand{\SS}{\mathcal{S}}

\newcommand{\alphahat}{\widehat{\alpha}}
\newcommand{\muhat}{\widehat{\mu}}
\newcommand{\pihat}{\widehat{\pi}}
\newcommand{\lambdahat}{\widehat{\lambda}}
\newcommand{\sigmahat}{\widehat{\sigma}}
\newcommand{\gammahat}{\widehat{\gamma}}

\newcommand{\wtilde}{\widetilde}
\newcommand{\dist}{\overset{\mathcal{D}}{\sim}}

\newcommand{\A}{\mathcal{A}}

\renewcommand{\epsilon}{\varepsilon}
\renewcommand\phi{\varphi}

\newcommand{\eff}{\mathrm{Eff}}

\newcommand{\esjd}{\textrm{ESJD}}

\newcommand{\acc}{\alpha}

\newcommand{\loglik}{\ell}

\newcommand{\accrwm}[2]{\alpha\left({#1};{#2}\right)}
\newcommand{\acca}[2]{\alpha_1\left({#1};{#2}\right)}

\newcommand{\accb}[2]{\alpha_2\left({#1};{#2}\right)}

\newcommand{\accado}[1]{\alpha_1^{(d)}\left({#1}\right)}

\newcommand{\bra}[1]{\langle #1 \rangle}
\newcommand{\BK}[1]{ {\left( #1 \right)} }
\newcommand{\sqBK}[1]{ {\left[ #1 \right]} }
\newcommand{\curBK}[1]{ {\left\{ #1 \right\}} }

\newcommand{\damh}{delayed-acceptance Metropolis-Hastings }
\newcommand{\PsMMH}{pseudo-marginal Metropolis-Hastings }

\numberwithin{equation}{section}
\theoremstyle{plain}
\newtheorem{thm}{Theorem}[section]
\newtheorem{prop}{Proposition}[section]
\newtheorem{lem}{Lemma}[section]

\theoremstyle{remark}

\newtheorem{example}{Example}[section]
\theoremstyle{definition}
\newtheorem{assumptions}{Assumptions}





\title{Efficiency of Delayed-Acceptance \\Random Walk Metropolis Algorithms}
\author{Chris Sherlock$^1$, Alexandre H. Thiery$^2$ and Andrew Golightly$^3$}
\date{$^1$c.sherlock@lancaster.ac.uk, $^2$a.h.thiery@gmail.com and $^3$andrew.golightly@ncl.ac.uk}

\begin{document}
\maketitle

\begin{abstract}
Delayed-acceptance Metropolis-Hastings and 
delayed-acceptance pseudo-marginal Metropolis-Hastings 
algorithms can be applied
when it is computationally expensive to calculate the true posterior
or an unbiased stochastic approximation thereof, but
a computationally cheap deterministic approximation is available. 
An initial accept-reject stage uses the cheap approximation for computing 
the Metropolis-Hastings ratio; proposals which are accepted at 
this stage are subjected to a further accept-reject step which corrects for the error
in the approximation. Since the expensive posterior, or the approximation
thereof, is only evaluated
for proposals which are accepted at the first stage, the cost of the
algorithm is reduced and larger scalings may be used.  

We focus on the random walk
Metropolis (RWM) and consider the delayed-acceptance RWM and the delayed-acceptance pseudo-marginal RWM. 
We provide a framework for incorporating relatively general 
deterministic approximations into
the theoretical analysis of high-dimensional targets. Justified by 
diffusion-approximation arguments, we derive expressions for the 
limiting efficiency and acceptance rates in high dimensional settings. 
Finally, these theoretical insights are leveraged to formulate practical guidelines 
for the efficient tuning of the algorithms. 
The robustness of these guidelines and predicted properties are verified 
against simulation studies, all of which are strictly outside of the 
domain of validity of our limit results. 
\end{abstract}

\emph{Keywords}:
Markov Chain Monte Carlo, Delayed-Acceptance, Pseudo-Marginal MCMC, Diffusion limit.

\section{Introduction}
\label{sec.introduction}

The Metropolis-Hastings algorithm is widely used
to approximately compute expectations with respect to
complicated high-dimensional posterior distributions
\cite{Gilks/Richardson/Spiegelhalter:1996, MCMCHandbook}. The algorithm requires
that it be possible to evaluate point-wise the posterior density $\pi$ up to a fixed but arbitrary constant of
proportionality. In many cases each such evaluation can be computationally expensive, prompting the use of a surrogate model to accelerate the computations.

The delayed-acceptance Metropolis-Hastings algorithm
\cite{ChristenFox:2005,Moulton/etal:2008,MCMCHandbook16,cui2011bayesian,Banterle2015,SGH2017,SherlockLee2017}, also called the modified Metropolis algorithm \cite{AuBeck2001,CatanachBeck2018}, preconditioned MCMC \cite{EffendievHouLuo2006} and two-stage MCMC \cite{efendiev2005efficient}, and a special case of the surrogate transition method \cite{liu2001monte},  assumes that the exact
posterior $\pi$ is available up to a constant of
integration, but is computationally expensive to evaluate. This framework is particularly relevant to the Bayesian approach to inverse problems \cite{kaipio2006statistical,stuart2010inverse} where point estimations of the posterior density typically involve numerically solving sets of partial differential equations. 
A fast approximation is therefore employed as a
first ``screening'' stage, with proposals which are rejected at the
screening stage simply discarded. The correct posterior,
$\pi$, is only  evaluated for proposals which pass the screening stage. A second  accept-reject step, which corrects for the error in the fast approximation, is then applied so that the desired true posterior is obtained as the limiting distribution of the Markov chain. The \damh  algorithm thus provides a principled method to leverage deterministic approximations to the posterior distribution in inverse problem modeling. In the sequel, we give several examples where a tenfold gain in efficiency is easily obtained by a well-tuned delayed-acceptance strategy.
 
The pseudo-marginal Metropolis-Hastings algorithm \cite{beaumont03,AndrieuRoberts:2009} allows
Bayesian inference when only an unbiased stochastic estimate of the target density, possibly up to an unknown normalisation constant, is available. The particle marginal Metropolis-Hastings algorithm \cite{AndrieuDoucetHolenstein:2010}, a special instance of the \PsMMH  algorithm when the unbiased estimates are obtained by using a particle filter, is a popular method for estimating parameters in hidden Markov models \cite{GolightlyWilkinson:2011,Knape/deValpine:2012}. The existing literature on tuning the \PsMMH is reviewed in Section \ref{sect.lit.review}.

The computational expense involved in creating each unbiased
stochastic estimate suggests that an initial accept-reject stage using a computationally cheap, deterministic, approximation to the posterior might be beneficial. This motivates the delayed-acceptance pseudo-marginal Metropolis-Hastings algorithm  \cite{Smith:2011,GolightlyHendersonSherlock:2013,SGH2017,Quiroz2018,EvRow2017, ViholaFranks2016}. Although the theoretical understanding of delayed-acceptance methods is still limited, several results are available. \cite{SherlockLee2017} compares the ergodicity properties of a delayed-acceptance algorithm with those of the parent MH algorithm, while \cite{FranksVihola2017} compares the asymptotic variance of the ergodic average from a delayed-acceptance algorithm with the variance of an importance-sampling estimator which takes as its proposal a sample from an MCMC algorithm targeting a surrogate.  Historically, insights into the performance and tuning of MCMC algorithms have been obtained by examining the limiting behaviour of a rescaled version of the Markov chain as the dimension of the statespace increases to infinity \cite{Roberts/Gelman/Gilks:1997,RobertsRosenthal:1998,Roberts/Rosenthal:2001, BedardA:2007,BedardRosenthal:2008,SherlockThieryRobertsRosenthal:2013, zanella2017dirichlet, yang2019optimal}.  In this article, we focus on random-walk proposals since this class of methods has the advantage of not requiring further information about the target, such as the local gradient or Hessian.
Thus, we concentrate on the delayed-acceptance random walk Metropolis (DARWM) and the delayed-acceptance pseudo-marginal random walk Metropolis (DAPsMRWM) algorithms: we obtain tuning and efficiency insights into these important algorithms through a 
diffusion-approximation.

%
%
\subsection{Contributions} 
\label{sec.contrib}
When an accurate approximate posterior distribution is available, the use of well-tuned DARWM and DAPsMRWM algorithms can lead to large computational savings. Unfortunately, the tuning of these methods is delicate: it involves choosing an appropriate scale for the random walk proposals and, for the DAPsMRWM, a computational budget allocated to the creation of unbiased estimates of the posterior distribution. Tuning these parameters by estimating the Effective Sample Size (ESS) is typically impractical since the ESS is notoriously difficult and computationally expensive to estimate. These tuning difficulties have hindered the adoption of these powerful methods.

We examine the efficiency of the DARWM and DAPsMRWM algorithms when exploring high-dimensional posterior distributions. We express the efficiency of the methods as a function of the scaling of the random walk proposals and, for the DAPsMRWM, of the quality and computational cost of the unbiased estimates 
of the posterior distribution. One of our main innovations is to circumvent the difficulty of characterising the infinite variety of problem-specific errors in the cheap approximations to the posterior distribution by assuming that the error is a realisation of a random function -- importantly, we empirically demonstrate that, in high-dimensional settings, this framework leads to robust conclusions that can be leveraged to develop efficient tuning guidelines. Under assumptions, we obtain MCMC diffusion limits through homogenization arguments. 
For the DAPsMRWM algorithm, we focus on a specific standard asymptotic regime which occurs 
for instance when the unbiased stochastic estimates are obtained
through a particle filter or when using a product of importance samplers for panel data.

We imagine that a practitioner has tuned a (pseudo-marginal) RWM algorithm,
found it too inefficient, and implemented a delayed-acceptance (pseudo-marginal) RWM
algorithm. Our analysis shows that the relative efficiency of the optimally tuned
delayed-acceptance algorithm when compared to the optimally tuned parent algorithm,
as well as the relative changes in the optimal random-walk scaling and computational budget allocated 
to the creation of unbiased estimates, can be characterised by two parameters: {\bf (1)} the relative computational cost of the cheap approximation compared to the cost of the posterior distribution 
{\bf (2)} a measure of the accuracy of the cheap approximation involving the acceptance rate for proposals that have passed the first, screening stage.
Crucially, these parameters can be estimated easily from a single additional short MCMC simulation. In practical terms, this means that once the parent algorithm (i.e. RWM or pseudo-marginal RWM) is approximately tuned, a single additional MCMC simulation is sufficient to tune the associated delayed acceptance algorithm.

Simulation studies verify different aspects of the theory, the theoretical predictions, a pivotal result (Lemma \ref{lem.pivotal}) on the relationship between changes in the posterior and changes in the deterministic approximation, and the tuning advice.

%
%
%

\section{Delayed-acceptance Random Walks}
\label{sect.intro.the.algs}

Consider a posterior distribution $\pi(\md \bmx)$ on a state-space $\X \subseteq \mathbb{R}^d$. We assume throughout this text that $\pi$ possesses a density $\pi(\bmx)$ with respect to the Lebesgue measure. The Random-Walk Metropolis (RWM) updating scheme provides a general class of algorithms for obtaining approximate samples from the distribution $\pi$ by constructing a Markov chain that is reversible with respect to $\pi$. Given the current value $\bmx \in \X$ of the Markov chain, a perturbation $\bmx^*$ distributed as 
\begin{align} \label{eq.rwm}
\bmx^* \; = \; \bmx + \lambda \, \xi
\end{align}
is generated, for a standard Gaussian random variable $\xi \sim \Normal{0, \mathbf{I}_d}$ and a scale parameter $\lambda > 0$. The proposal $\bmx^*$ is accepted with probability
$\accrwm{\bmx}{\bmx^*} \; = \; 1 \, \wedge \, [\pi(\bmx^*)/\pi(\bmx)]$. Upon acceptance, the proposal $\bmx^* \in \X$ becomes the next current value. Otherwise the current value is left unchanged. For a given scale parameter $\lambda > 0$, and $X\sim \pi$, the acceptance rate of the RWM algorithm is defined as
\begin{align}
\alpha_{\textrm{rwm}}(\lambda) = \EE[\alpha(X, X + \lambda \, \xi)].
\end{align}
This setting is more general than it  might appear since for any matrix $V=AA^\top$ with square $A$, exploration of the posterior of $X'$ using a proposal variance matrix of $\lambda^2V$ is equivalent to exploring $X=A^{-1} X'$ using \eqref{eq.rwm}.

\subsection{Delayed-Acceptance strategies}
\label{sec.algo.description}
As described in the introduction,
there are many situations where $\pi$ is computationally expensive to calculate while a computationally cheap approximation $\pi_a(\bmx)$ to the density $\pi(\bmx)$ is available and can be leveraged within MCMC schemes using the delayed-acceptance algorithm.
At the $k$-th iteration and given the current value $\bmx_k \in \X$ of the parameter, the DARWM first generates a proposal $\bmx^* \in \X$ distributed as \eqref{eq.rwm} and proceeds as follows.
%
%
\begin{enumerate}
\item {\bf Stage-One}: compute the approximation $\pi_a(\bmx^*)$ and the screening acceptance probability
  $\alpha_1(\bmx_k; \bmx^*)  =  1  \wedge \frac{\pi_a(\bmx^*)}{\pi_a(\bmx_k)}$.
%
%
With probability $\acca{\bmx_k}{\bmx^*}$ proceed to Stage-Two. Otherwise set $\bmx_{k+1}=\bmx_k$ and iterate.
\item{\bf Stage-Two}: compute the posterior distribution $\pi(\bmx^*)$ and the second stage probability
$\alpha_2(\bmx_k; \bmx^*)  =  
1  \wedge   \frac{\pi(\bmx^*) \, \pi_a(\bmx_k)}{\pi(\bmx_k) \, \pi_a(\bmx^*)}$.
%
%
With probability $\alpha_2(\bmx_k; \bmx^*)$, set $\bmx_{k+1} = \bmx^*$. Otherwise, set $\bmx_{k+1} = \bmx_k$.
\end{enumerate}
This defines a Markov chain that is reversible with respect to the posterior distribution $\pi$. The Stage-One acceptance rate is defined as
\begin{align}
\label{eqn.alphaonelambda}
  \alpha_1(\lambda) = \EE[\alpha_1(X, X^*)]
\end{align}
where $X \sim \pi$ and $X^* = X + \lambda \, \xi$.
Clearly, the more accurate the approximation $\pi_a$, the higher the Stage-Two acceptance probability. 
The overall acceptance probability is
$\alpha_{12}(\bmx_k;\bmx^*)  =  \alpha_{1}(\bmx_k; \bmx^*)  \times  \alpha_{2}(\bmx_k; \bmx^*)$,
%
whilst the overall acceptance rate is
$\alpha_{12}(\lambda)=\EE[\alpha_{12}(X,X^*)]$.
Our tuning guidelines are based of the conditional Stage-Two acceptance rate $\alpha_{2|1}(\lambda)$ defined as
\begin{align}
  \label{eqn.condtwodef}
\alpha_{2|1}(\lambda) = \frac{\alpha_{12}(\lambda)}{\alpha_1(\lambda)}.
\end{align}

Pseudo-marginal Metropolis-Hastings algorithms 
\cite{beaumont03,AndrieuRoberts:2009} presume 
that it is computationally infeasible to evaluate the posterior density $\pi(\bmx)$, even up to a multiplicative constant, but that it is possible to generate a positive and unbiased estimate of it: $\pih(\bmx;\bmu)$. The quantity $\bmu \in \U$ represents a sample from a source of randomness necessary to produce the stochastic estimate, and $\pih: \X \times \U \to [0,\infty)$ is a deterministic function that, given $\bmx \in \X$ and a random sample $\bmu \in \U$, produces the estimate $\pih(\bmx, \bmu)$. Without loss of generality, one can assume that the auxiliary variable $\bmu \in \U$ is sampled from a fixed and known distribution with density $\rho(\bmu)$. For any $\bmx \in \X$ we have that $\int \pih(\bmx, \bmu) \, \rho(\bmu) \, d \bmu = \pi(\bmx)$. The DAPsMRWM defines a Markov chain on the extended space $\X \times \U$ that can be described as follows. At the $k$-th iteration, given the current value $(\bmx_k, \bmu_k) \in \X \times \U$, the DAPsMRWM generates a Gaussian perturbation $\bmx^* \in \X$ distributed \eqref{eq.rwm}. The Stage-One screening procedure is identical to that of the DARWM. If this screening procedure is successful, a new proposal $\bmu^*$ is generated from $\rho(\bmu)$, leading to an estimate $\pih(\bmx^*; \bmu^*)$ to the  posterior distribution $\pi(\bmx^*)$. The modified Stage-Two acceptance probability reads
\begin{align*}
\alpha_2(\bmx_k, \bmu_k, \bmx^*, \bmu^*) = 1 \, \wedge \,  
\frac{\pih(\bmx^*, \bmu^*) \, \pi_a(\bmx_k)}{\pih(\bmx_k, \bmu_k) \, \pi_a(\bmx^*)}.
\end{align*}
With probability $\alpha_2(\bmx_k, \bmu_k, \bmx^*, \bmu^*)$ one sets $(\bmx_{k+1}, \bmu_{k+1}) = (\bmx^*, \bmu^*)$. Otherwise, one sets $(\bmx_{k+1}, \bmu_{k+1}) = (\bmx_k, \bmu_k)$. Standard arguments show that the DAPsMRWM is reversible with respect to the extended density $\pih(\bmx, \bmu) \, \rho(\bmu)$ on $\X \times \U$. Indeed, this extended density has $\pi(\bmx)$ as marginal density. 
Particle marginal MCMC \cite{AndrieuDoucetHolenstein:2010} is a special case of pseudo-marginal MCMC where the unbiased estimate of the posterior is obtained using a particle filter. It has become one of the key generic methodologies for Bayesian inference of hidden Markov 
models \cite{flury2011bayesian,GolightlyWilkinson:2011,DahlinSchon2019}. The conditional Stage-Two acceptance rate is again defined through \eqref{eqn.condtwodef} with $\alpha_1(\lambda)$ as in \eqref{eqn.alphaonelambda}, but 
$\alpha_{12}(\lambda) = \EE[\alpha_1(X, X^*) \, \alpha_{2}(X,U,X^*, U^*)]$ with
 $(X,U)$ distributed according to the stationary distribution of the DAPsMRWM Markov chain, $X^* = X + \lambda \, \xi$ and $U^* \sim \rho(du)$.

\subsection{Tuning of RWM algorithms}
\label{sect.lit.review}
The efficiency of a given RWM algorithm varies enormously with the scale $\lambda > 0$ of the Gaussian perturbations $\bmx^* = \bmx + \lambda \, \xi$. Small proposed jumps lead to high acceptance rates but little movement across the state-space, whereas large proposed jumps lead to low acceptance rates and again to inefficient exploration of the
state-space. Optimisation of the scale of the proposal has been tackled for various shapes of target
\cite{Roberts/Gelman/Gilks:1997,Roberts/Rosenthal:2001,BedardA:2007,BeskosRobertsStuart:2009,Sherlock/Roberts:2009,Sherlock/Fearnhead/Roberts:2010,Sherlock:2013} and has led to the following rule of thumb: choose the scale so that the acceptance rate is approximately $\alphahat_{\textrm{rwm}}\approx 23\%$. Although nearly all of the theoretical results are based upon limiting arguments in high dimension, the rule of thumb appears to be applicable even in relatively low dimensions \cite{Sherlock/Fearnhead/Roberts:2010}.

In discussing the literature on optimising pseudo-marginal algorithms
it is helpful to define  
a standard asymptotic regime
which
is made precise in Assumptions \ref{ass.noise.diff.indep}--\ref{ass.effort.vs.variance} in Section \ref{sec.stoch.approx}, where its justification and wide applicability is discussed further.

A relatively tractable lower bound on the efficiency of a pseudo-marginal Metropolis-Hastings algorithm is provided
in \cite{Doucetetal:2013}.
Under the standard asymptotic regime, it is shown that the inefficiency of the bounding chain, taking into account the computational cost,
is minimised when the variance of
the noise in the estimated log-posterior is
between $0.92^2$ and $1.68^2$.
In \cite{SherlockThieryRobertsRosenthal:2013}  
the pseudo-marginal random walk Metropolis algorithm is examine under various
regimes for the noise in the estimate of the posterior. Mixing efficiency is considered in
terms of both limiting expected squared jump distance and the speed of a
limiting diffusion, and an overall
efficiency (ESJD/time) is defined, which takes into
account the total computational time.  Under the standard asymptotic regime, joint optimisation of this
efficiency with respect to the variance of the noise in the log-target
and the RWM scale parameter is considered. It is shown that the optimal
scaling occurs when the acceptance rate is approximately $\alphahat_{\textrm{pm}}\approx 7.0\%$ and
the variance of the noise in the estimate of the log-posterior is
approximately $\sigmahat_{\textrm{pm}}^2 \approx 1.82^2$. It is also noted in \cite{SherlockThieryRobertsRosenthal:2013} that for the two different noise distributions considered in
the article, the optimal scaling appears to be insensitive to the
noise variance, and even to the distribution. This phenomenon is shown to hold across a large class of
noise distributions in \cite{Sherlock:2015}.

This article extends \cite{SherlockThieryRobertsRosenthal:2013} to
the corresponding delayed-acceptance algorithm, of which the DARWM is a special case. Results on limiting acceptance rates and mixing efficiency are proved, as is a diffusion
limit. For the DARWM, and for the DAPsMRWM under the standard asymptotic regime, efficiency is then considered in detail, leading to the robust, practical tuning advice that we describe and demonstrate next.

\subsection{Tuning the DARWM}
\label{sec.DARWMtune}
In the interest of brevity, we focus here on tuning guidelines for the scaling parameter $\lambda_{\mathrm{da}} > 0$ of the DARWM, leading to an estimate, $\lambdahat_{\mathrm{da}}$. The rationale for these is provided in Section \ref{sec.optimisation}. Analogous guidelines for tuning both the scaling and the number of particles in a  DAPsMRWM algorithm are presented in the \emph{Supplementary Material}.

Assume that a RWM algorithm targeting $\pi$ has been constructed, with associated approximately optimal scaling $\lambdahat_{\textrm{rwm}} > 0$ and corresponding empirical acceptance rate $\alphahat_{\textrm{rwm}}(\lambdahat_{\textrm{rwm}})$, perhaps found using the acceptance-rate heuristic described in Section \ref{sect.lit.review}, or perhaps by directly maximising an empirical measure of efficiency. In order to accelerate inference, the DARWM algorithm makes use of a computationally cheap approximation $\pi_a$. Our tuning guidelines for implementing the DARWM rely on two diagnostics: {\bf(1)} the empirical conditional Stage-Two acceptance rate $\alphahat_{2|1}(\lambdahat_{\textrm{rwm}})$, and {\bf (2)} the empirical relative computational cost $\eta > 0$ of evaluating $\pi_a(x)$ compared to $\pi(x)$. Standard timing functions give the latter, whilst the former is  the ratio of the number of proposals that were accepted at both stages to the number accepted at Stage-One. The quantity $\alphahat_{2|1}(\lambdahat_{\textrm{rwm}})$ is a measure of the accuracy of $\pi_a$: if the approximation were perfect, this would equal one.

\begin{figure}[h]
\begin{center}
\includegraphics[height=0.3\textheight, width=0.45\textwidth]{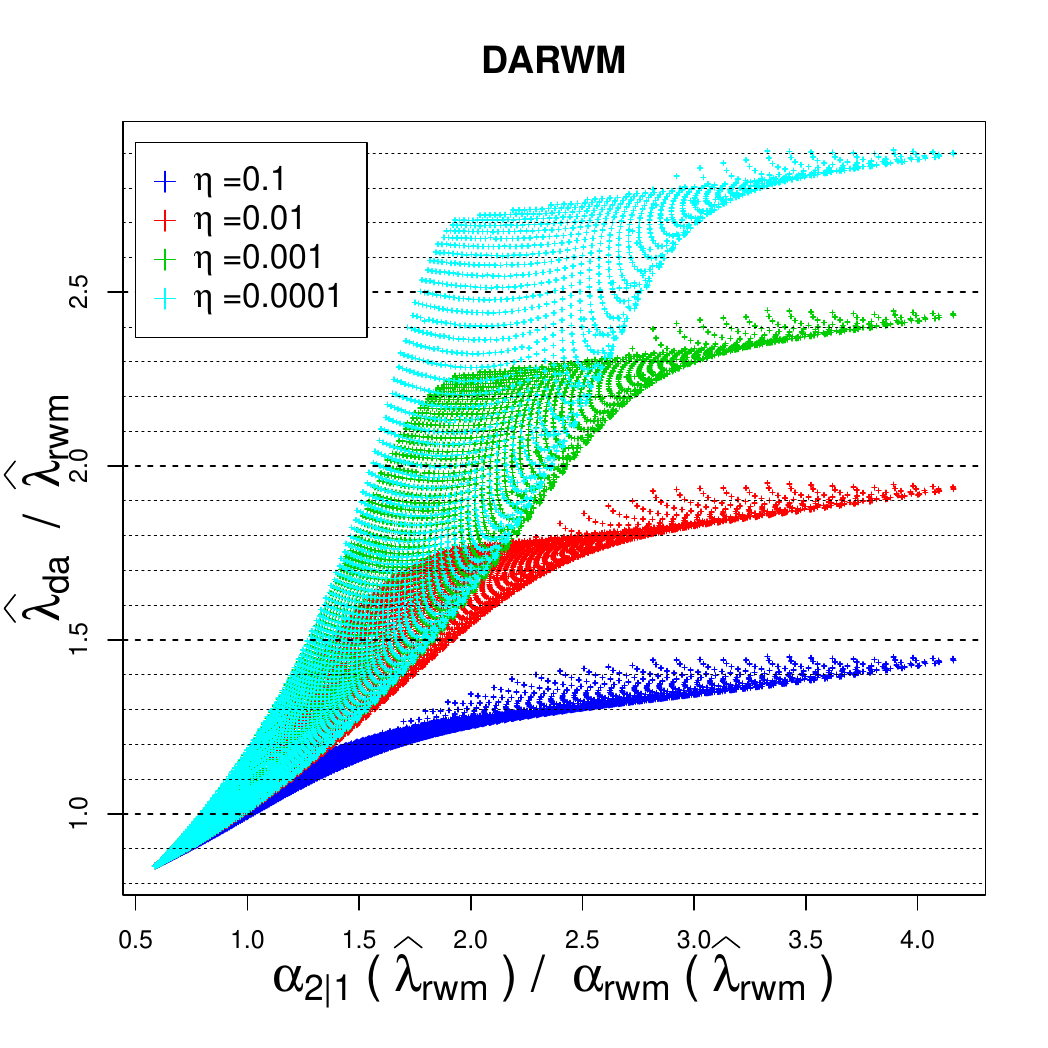}
\includegraphics[height=0.3\textheight, width=0.45\textwidth]{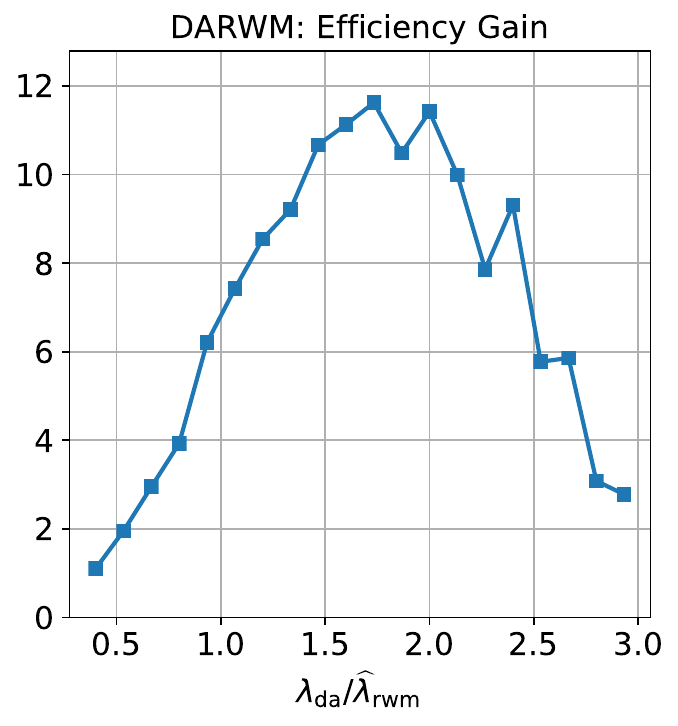}
\caption{
{\bf Left:} Range for $\lambdahat_{\textrm{da}}/\lambdahat_{\textrm{rwm}}$ as a function of $\alpha_{2|1}(\lambdahat_{\textrm{rwm}})/\alpha_{\textrm{rwm}}(\lambdahat_{\textrm{rwm}})$ for different values of $\eta$.
{\bf Right:} Efficiency gains for the ODE model \eqref{eq.ode.model}, defined as the ratio between the minESS/sec when running the DARWM algorithm with parameter $\lambda_{\textrm{da}}$, and the minESS/sec when running the optimally tuned RWM with scale parameter $\widehat{\lambda}_{\textrm{rwm}}$.
\label{fig.da.optimal.lambda}
}
\end{center}
\end{figure}

Figure \ref{fig.da.optimal.lambda}, the creation of which is explained in Section \ref{sec.DARWMEFF} and a larger version of which is given in the {\it Supplementary Material}, then provides a direct look up of the range for $\lambdahat_{\textrm{da}}/\lambdahat_{\textrm{rwm}}$ given $\eta$ and $\alphahat_{2|1}(\lambdahat_{\textrm{rwm}})/\alphahat_{\textrm{rwm}}(\lambdahat_{\textrm{rwm}})$; provided that the approximation is reasonably accurate, which is when delayed acceptance is most helpful, this range is narrow. Given a tuned RWM algorithm with optimal scaling $\widehat{\lambda}_{\mathrm{rwm}}$ and acceptance rate $\alphahat_{\textrm{rwm}}(\lambdahat_{\textrm{rwm}})$, the tuning of the DARWM proceeds as follows:
\begin{enumerate}
\item Run the DARWM with scaling $\widehat{\lambda}_{\mathrm{rwm}}$ and estimate
  $\alpha_{2|1}(\widehat{\lambda}_{\mathrm{rwm}})$.
\item Determine $\eta$ by timing evaluations of $\pi$ and $\pi_a$.
  \item Set $\widehat{\lambda}_{\mathrm{da}}$ according to Figure \ref{fig.da.optimal.lambda} (left).
\end{enumerate}

If the envelope of possible values on the y-axis is wide we suggest using the upper bound of the envelope (see the {\it Supplementary Material}). Moreover, in examples here and in the \emph{Supplementary Material} the predicted optimal scaling is slightly below the true optimum. The practitioner might, therefore, wish to consider a slight upwards shift of the predicted optimal scaling.

We now demonstrate the method and check its efficacy on two examples; further examples are given in the {\it {\it Supplementary Material}}. Throughout, we measure empirical efficiency in terms of the minimum (over all components) number of effective samples per second, minESS/sec. 
Details of parameter settings, priors \emph{etc} are provided in the {\it Supplementary Material}. 

\begin{example}
\label{example.darwmA}A four-state Markov modulated Poisson Process  \cite{FearnheadSherlock2006} is initialised in state $1$, and only transitions $1\rightarrow 2\rightarrow 3\rightarrow 4\rightarrow 1$ are allowed. The model has $8$ unknown parameters. Proposed jumps are $N(0,\lambda^2 I_8)$, and the approximation, $\pi_a$ is the product of the prior density and a Student-$t_5$ density centred at the maximum likelihood estimate (MLE) and with parameter $\Sigma^{-1}$ set to the negative Hessian at the MLE.   

The RWM is approximately optimised at $\lambdahat_{\textrm{rwm}}\approx 0.11$ which gives $\alphahat_{\textrm{rwm}}(\lambdahat_{\textrm{rwm}})\approx 0.19$ and minESS/sec of $1.28$. The single run of the DARWM gives $\alphahat_{2|1}(\lambdahat_{\textrm{rwm}})\approx 0.75$ and $1/\eta \approx 2\times 10^4$. Figure \ref{fig.da.optimal.lambda} with this $\eta$ and with $\alphahat_{2|1}(\lambdahat_{\textrm{rwm}})/\alphahat_{\textrm{rwm}}(\lambdahat_{\textrm{rwm}})\approx 3.93$ suggests $\lambdahat_{\textrm{da}}/\lambdahat_{\textrm{rwm}}\approx 2.9$; i.e. $\lambdahat_{\textrm{da}}\approx 0.32$. 

The minESS/sec at $\lambdahat_{\textrm{da}}$ was $\approx 17.4$,
a thirteenfold improvement.
A thorough grid search using long MCMC runs suggested the true optimal scaling was $\approx 0.4$, although the minESS/sec was only slightly improved, at $18.7$. 
\end{example}

\begin{example} \label{example.ode} 
Consider the ordinary differential equation (ODE) in $\RR^5$:
\begin{align}
\label{eq.ode.model}
\dot{x}_t 
= \phi\BK{x_t \, (1 - x_t) + \bra{x_t, A \, x_t}}
\end{align}
where the product $x_t \, (1 - x_t)$ is to be understood component-wise, $A \in \RR^{5,5}$ is a skew-symmetric matrix, $\bra{\cdot, \cdot}$ is the usual dot-product in $\RR^5$ and the real-valued function $\phi(u) \equiv 20 \cdot \arctan(u/20)$ is applied component-wise. We collect observations at discrete time $t_k = k \, \Delta t$ for $\Delta t \leq t_k \leq 4$ and $\Delta t = 0.2$ with additive Gaussian noise: $y_k = x(t_k) + \xi_k$ with $\xi_k \sim \Normal{0, \sigma^2_{\textrm{noise}}}$, with $\sigma_{\textrm{noise}}=0.03$ fixed and known. For a known initial position $x_0$ and from the set of noisy observations $\{y_k\}_{k=1}^{19}$, we would like to infer the $d=10$ unknown coefficients of the skew-symmetric matrix $A \in \RR^{5,5}$.  We approximate the solution of the ODE with a standard Euler discretization with step $\epsilon = 10^{-3}$. The posterior distribution $\pi$ is highly non-isotropic. Proposed jumps are
$N(0,\lambda^2\widehat{\Sigma})$, where $\widehat{\Sigma}$ is an approximation of the covariance matrix of $\pi$ estimated from a preliminary RWM run. Optimal efficiency is obtained with $\widehat{\lambda}_{\textrm{rwm}} \approx 0.75$, leading to an acceptance rate of $\alphahat_{\textrm{rwm}}(\lambdahat_{\textrm{rwm}})\approx 0.15$. As the computationally cheap approximate posterior distribution $\pi_a$, we simply use a coarser Euler discretization with step $\epsilon = 10^{-1}$, leading to a speed-up of $1/\eta \approx 10^2$. Running a DARWM at $\lambdahat_{\textrm{rwm}}$, one obtains that $\alpha_{2|1}(\lambdahat_{\textrm{rwm}}) \approx 0.55$. Figure \ref{fig.da.optimal.lambda} with this $\eta = 10^{-2}$ and with $\alphahat_{2|1}(\lambdahat_{\textrm{rwm}}) / \alphahat_{\textrm{rwm}}(\lambdahat_{\textrm{rwm}}) \approx 0.55 / 0.15 \approx 3.6$ suggests $\lambdahat_{\textrm{da}}/\lambdahat_{\textrm{rwm}} \approx 1.9$; i.e. $\lambdahat_{\textrm{da}}\approx 1.4$. This is in good agreement with the results presented in Figure \ref{fig.da.optimal.lambda} (right) and leads to an approximately elevenfold efficiency gain (measured in minESS/sec).
\end{example}

In both examples, choosing $\lambdahat_{\textrm{rwm}}$ so that $\alphahat_{\textrm{rwm}}(\lambdahat_{\textrm{rwm}})\approx 23\%$ led to a slightly lower $\lambdahat_{\mathrm{da}}$ and a minESS/sec approximately $80\%$ of the optimum.

%
%
\section{High dimensional regime}
\label{sec.high.dim}
In this section we introduce the high-dimensional asymptotic regime to
be analysed in Sections \ref{sec.asymp.analysis} and
\ref{sec.optimisation}. In Section
\ref{sec.product.form}, the target distributions are  described. In
Section \ref{sec.deter.approx} and \ref{sec.stoch.approx}
respectively, we introduce the deterministic and stochastic
approximation to the target distribution and the associated
notations.

\subsection{Product form target distributions}
\label{sec.product.form}
We consider target densities that have a simple product form. A research program along these lines was
initiated in the pair of papers \cite{Roberts/Gelman/Gilks:1997,RobertsRosenthal:1998}. Although only
simple exchangeable product form targets were considered, a range of subsequent
theoretical analyses confirmed that the results obtained in these articles also hold for more complex target distributions, such as products of one-dimensional distributions with different variances and elliptically
symmetric distributions \cite{Roberts/Rosenthal:2001,breyer2004optimal,Sherlock/Roberts:2009,BedardA:2007,Sherlock/Fearnhead/Roberts:2010}. 
We consider a target distribution $\pi^{(d)}(d\bmx)$ in $\RR^d$ with a density $\pi^{(d)}(\bmx)$ with respect to the Lebesgue measure that can expressed as
\begin{align} \label{eq.target.distribution}
\pi^{(d)}(\bmx) = \pi^{(d)}(x_1, \ldots, x_d) = \prod_{i=1}^d \pi(x_i)
\end{align}
for a one-dimensional density $\pi(x) \equiv \pi^{(1)}(x)$ on the real line. 
Furthermore, we only consider the random walk algorithm with Gaussian perturbations. For a current position $\bmx \in \RR^d$, the proposal $\bmX^*$ is distributed as
\begin{align}
\label{eq.proposal.high.dim}
\bmX^* = \bmx + \lambda^{(d)} \, \bmZ^{(d)}
\qquad \textrm{with} \qquad
\lambda^{(d)} = {\mu} \, d^{-1/2}\,/\,I 
\end{align}
and a standard centred Gaussian random variable $\bmZ^{(d)}$ and a tuning parameter $\mu>0$. The  target-dependent coefficient $I > 0$ is given by
\begin{align} \label{eq.I}
I^2
= \Expect{  \partial_x (\log \pi)(X)^2 }
= -\Expect{ \partial_{xx} (\log \pi)(X) }
\end{align}
for a scalar random variable $X \dist \pi$. The second equality in
Equation \eqref{eq.I} follows from an integration by parts that is
justified, for example, by the regularity Assumptions
\ref{ass.regularity} described in Section \ref{sec.asymp.analysis}. The constant $I>0$ is introduced to simplify the statements of the results to follow.
The scaling $d^{-1/2}$ ensures that, in the high-dimensional regime $d \to \infty$ the mean acceptance probability of a standard Random Walk Metropolis algorithm with proposals \eqref{eq.proposal.high.dim} and target distribution \eqref{eq.target.distribution} stays bounded away from zero and one. Under mild assumptions, this scaling is optimal \cite{Roberts/Gelman/Gilks:1997,BedardA:2007,BeskosRobertsStuart:2009,mattingly2012diffusion}.

%
%
\subsection{Deterministic approximation}
\label{sec.deter.approx}
To circumvent the difficulty of characterising the infinite variety of problem-specific errors in the cheap approximations $\pi_a$ to the posterior distribution $\pi$, we model the discrepancy $s(\bmx)=\log[ \pi_a(\bmx) / \pi(\bmx)]$ as the realisation of a random function. In our setting the target distribution is a $d$-dimensional product of one-dimensional distributions and we imagine that each of the terms in this product is approximated through an independent realisation of a random function.
Thus, the deterministic approximation
$\pi^{(d)}_a(\bmx) = \pi^{(d)}(\bmx) \times \exp\left( s^{(d)}(\bmx) \right)$ to
the posterior density $\pi^{(d)}(\bmx)$ has a deterministic
error, on a logarithmic scale:
\begin{align} \label{eq.deterministic.log.error}
s^{(d)}(\bmx) = \sum_{i=1}^d \SS(x_i, \aux_i),
\end{align}
where $\{\aux_i\}_{i \geq 1}$ is the realisation of an i.i.d sequence
of auxiliary random variables $\{\Aux_i\}_{i \geq 1}$. Without loss of
generality, we can assume that these auxiliary random variables are
uniformly distributed on the interval $[0,1]$. 
We assume that the deterministic function $\SS: \RR \times [0,1] \to \RR$ in Equation \eqref{eq.deterministic.log.error} satisfies the regularity Assumptions \ref{ass.regularity} stated below. The following two properties of the function $\SS$ directly influence the limiting efficiency of the delayed acceptance algorithm,
\begin{align}
\label{eq.betas}
\beta_1 \, = \, {  \Expect{ \, \partial_{xx} \SS(X, \Aux) \,} }/{I^2} 
\quad \textrm{and} \quad
\beta_2 \, = \, \curBK{ { \Expect{ \, \partial_x \SS(X,\Aux)^2 \, } }/{I^2} }^{1/2} ,
\end{align}
where expectation is taken over two independent random variables $\Aux \dist \textrm{Uniform}([0,1])$ and $X \dist \pi$. Equation \eqref{eq.betas} and an integration by parts give that
%
$I^2 \, |\beta_1| 
 = 
|\Expect{\partial_{xx} \SS(X, \Aux)}| 
 =  |\Expect{ \partial_x (\log \pi)(X) \, \partial_x \SS(X,\Aux) }|
$.
The Cauchy-Schwarz inequality and the definition \eqref{eq.I} of the quantity $I>0$ then give that
the coefficients $\beta_1, \beta_2$ satisfy the inequality
\begin{align} \label{ineq.betas}
-\beta_2 \leq \beta_1  \leq \beta_2.
\end{align}
%

%
%
\subsection{Stochastic approximation}
\label{sec.stoch.approx}
We now describe our modeling assumptions on the stochastic approximations to the target distributions $\pi^d$. For modeling purposes, it is more natural to express the stochastic approximation on a logarithmic scale and define the new quantity $w \in \RR$ as
\begin{align*}
\pih^{(d)}(\bmx, \bmu) = \pi^{(d)}(\bmx) \, e^w.
\end{align*}
In other words, $w = \log[\pih^{(d)}(\bmx, \bmu) / \pi^{(d)}(\bmx)]$. For a given value of $\bmx \in \RR^d$, the distribution of the quantity $\log[\pih^{(d)}(\bmx, \bmU^*) / \pi^{(d)}(\bmx)]$, where $\bmU^* \sim \rho(d \bmu)$, is denoted as $\pi_{W^*|X^*}(w^* | \bmx)$. The Markov-Chain $\{(\bmx_k, \bmu_k)\}_{k \geq 0}$ on $\RR^d \times \U$ can equivalently be described as a Markov Chain $\{(\bmx_k, w_k)\}_{k \geq 0}$ on $\RR^d \times \RR$. At the $k$-th iteration, the proposal $(\bmx_k, \bmu_k) \mapsto (\bmx^*, \bmu^*)$ is equivalently expressed as $(\bmx_k, w_k) \mapsto (\bmx^*, w^*)$ where, conditionally upon $\bmx^* \in \RR^d$, the proposal $w^* \in \RR$ is distributed as $\pi_{W^*|X^*}(w^* | \bmx^*)$. The property $\int \pih^{(d)}(\bmx, \bmu) \, \rho(\bmu) \, d \bmu = \pi(\bmx)$ means that for any $\bmx \in \RR^d$ we have $\int e^{w^*} \, \pi_{W^*|X^*}(w^*|\bmx) \, dw^* = 1$. Furthermore, since the Markov Chain $\{(\bmx_k, \bmu_k)\}_{k \geq 0}$ is reversible with respect to the density $\pih^{(d)}(\bmx, \bmu) \, \rho(\bmu)$, one can check that the Markov Chain $\{(\bmx_k, w_k)\}_{k \geq 0}$ is reversible with respect to the density
\begin{align}
\label{joint.posterior}
\pi^{(d)}(\bmx) \, e^w \, \pi_{W^*|X^*}(w | \bmx).
\end{align}
In the remainder of this article, we write $\accb{\bmx;w}{\bmx^*;w^*}$ for denoting the Stage-Two acceptance probability of the proposal $(\bmx, w) \to (\bmx^*, w^*)$.\\

\noindent
{\bf Standard Asymptotic Regime:} similarly to the articles \cite{Pittetal:2012,Doucetetal:2013,SherlockThieryRobertsRosenthal:2013}, we adopt the following three assumptions. These modeling assumptions constitute the standard asymptotic regime alluded to in Section \ref{sect.lit.review}. 
%
%
\begin{assumptions}
\label{ass.noise.diff.indep}
The distribution of the additive noise $W^*$ in the estimated log-target is independent of the proposal value itself. There exists a density $\pi_{W^*}(w^*)$ such that, for any $\bmx^* \in \RR^d$, $\pi_{W^*|X^*}(w^*|\bmx^*)=\pi_{W^*}(w^*)$.
\end{assumptions}
An asymptotic argument justifying this assumption for panel data, where the unbiased estimate is obtained from a product of importance-sampling estimates, and hidden-Markov models, where it is obtained from a particle filter, using the posterior concentration as the number of observations increases to infinity is given in \cite{SDDP2018}. 
It follows from Equation \eqref{joint.posterior} that the Markov Chain $\{(\bmx_k, w_k)\}_{k \geq 0}$ is reversible with respect to distribution $\pi^{(d)} \otimes \pi_W$ where the real valued distribution $\pi_W$ is given by the change of probability
\begin{align}
\label{eq.change.W.W}
\frac{d \pi_{W}}{d \pi_{W^*}}(w) = \exp(w).
\end{align}
This is Lemma $1$ of \cite{Pittetal:2012}.
In Section \ref{sec.optimisation}, we examine the behaviour of the algorithm under the following Gaussian assumption.
%
%
\begin{assumptions}
\label{ass.noise.gauss}
In addition to being independent of the proposal, $\bmX^*$, the additive noise in the estimated log-target at the proposal, $W^*$, is Gaussian: 
\begin{align}
\label{eqn.gauss.noise}
W^* \dist \Normal{ -\sigma^2 / 2, \sigma^2 }.
\end{align}
\end{assumptions}
In Equation \eqref{eqn.gauss.noise} the mean is determined by the variance so as to give an unbiased estimate of the posterior, $\Expect{\exp \left( W^* \right) }=1$. It follows from \eqref{eq.change.W.W} that at stationarity, under Assumptions \ref{ass.noise.gauss}, we have
\begin{align}
\label{eqn.gauss.noise.W}
W \dist \Normal{ \sigma^2 / 2,\sigma^2 }.
\end{align}
This article focuses on algorithms where the stochastic
approximation to the likelihood is computationally expensive. In most
scenarios of interest
\cite{GolightlyWilkinson:2011,Knape/deValpine:2012,GolightlyHendersonSherlock:2013,FilipponeGirolami:2014}
the stochastic approximation is obtained through Monte-Carlo methods
(e.g. importance sampling, particle filter) that converge at the standard $N^{-1/2}$ rate where $N$ designates the number of samples/particles used.
To take into account the computational costs necessary to produce a
stochastic estimate of the target-density, we thus assume the
following in the rest of this article.
%
%
\begin{assumptions}
\label{ass.effort.vs.variance}
The computational time required to obtain an estimate of the log-target density with variance $\sigma^2$ is: $\textrm{(compute-time)} \propto \sigma^{-2}$.
%
\end{assumptions}
The article \cite{berard2013lognormal} shows that for state-space models (and panel data) the unbiased estimate of the likelihood obtained from standard particle methods \cite{DelMoral:2004} (or a product of importance sampling estimators) satisfies a log-normal central limit theorem, as the number of observations and particles (or importance samples) goes to infinity, if this number is of the same order as the number of noisy observations.
This justifies the Gaussian approximation \eqref{eqn.gauss.noise} and shows that the log-error is asymptotically inversely proportional to the number of particles used, justifying Assumptions \ref{ass.effort.vs.variance}. The article \cite{sherlock2017pseudo} studies the tuning of pseudo-marginal MCMC methods when Assumptions \ref{ass.effort.vs.variance} is not appropriate.

%
%
\section{Asymptotic analysis}
\label{sec.asymp.analysis}
In this section we investigate the behaviour of the DAPsMRWM, and hence of the DARWM as a special case, in the high-dimensional regime described in Section \ref{sec.high.dim}. We make the following regularity assumptions.
%
%
\begin{assumptions}
\label{ass.regularity}
The density $\pi: \RR \to (0, \infty)$ and the function $\SS: \RR \times [0,1] \to \RR$ satisfy the following.
\begin{enumerate}
\item
The function $x \mapsto \log \pi$ is thrice differentiable, with second and third derivative bounded and $\Expect{  (\partial_x \log \pi)^2(X)}$ is finite, for
$X \dist \pi$. 
\item
The first three derivatives with respect to the first argument of the function $(x,\aux) \mapsto \SS(x, \gamma)$ exist and are bounded over $(x,\aux) \in \RR \times [0,1]$.
\end{enumerate}
\end{assumptions}
Assumptions \ref{ass.regularity} are used to control
the behaviour of second-order Taylor expansions; they could be relaxed
in several directions at the costs of increasing technicality in the
proofs. When the current position of the algorithm is $(\bmx^{(d)}, w^{(d)}) \in \RR^d \times \RR$, a proposal $(\bmx^{(d),*}, w^{(d),*})$ is generated, distributed as
\begin{align}
\label{eq.proposals.X.W}
\bmX^{(d),*}  =
\bmx^{(d)} +  {\mu } \, d^{-1/2} \, \bmZ^{(d)}\,/\,I
\qquad \textrm{and} \qquad
W^{(d),*} 
\dist \pi_{W^*},
\end{align}
where $\bmZ^{(d)}\sim\Normal{\bmzero,\bmI_d}$. Since the following quantities repeatedly appear in the analysis to follow, we set
\begin{align}
\label{eq.def.q.s}
\left\{
\begin{aligned}
q^{(d)}_{\Delta}(\bmx^{(d)}, \bmx^{(d),*}) &= \log \big[ \pi^{(d)}(\bmx^{(d),*}) / \pi^{(d)}(\bmx^{(d)}) \big],\\
s^{(d)}_{\Delta}(\bmx^{(d)}, \bmx^{(d),*}) &=  s^{(d)}(\bmx^{(d),*}) - s^{(d)}(\bmx^{(d)}).
\end{aligned}
\right.
\end{align}
The following lemma, proved in Section \ref{sec.proof.lemma.pivotal} of the \emph{Supplementary Material}, is pivotal to our analysis of the DAPsMRWM algorithm. It shows that the quantities defined in \eqref{eq.def.q.s} converge jointly to a Gaussian distribution whose parameters can be expressed in terms of the scaling $\mu > 0$ of the RWM perturbations, as well as the parameters $(\beta_1, \beta_2) \in \RR \times \RR^{+}$ that describe the properties of the deterministic approximation to the target distribution.
%
%
\begin{lem} \label{lem.pivotal}
Let the regularity Assumptions \ref{ass.regularity} hold.
Let $\{\aux_i\}_{i \geq 1}$ be a realisation of the sequence of auxiliary random variable used to described the deterministic approximation \eqref{eq.deterministic.log.error}.
Let $\{x_i\}_{i \geq 1}$ be the realisation of an i.i.d sequence marginally distributed as $\pi$. For $d \geq 1$, set $\bmx^{(d)} = (x_1, \ldots, x_d) \in \RR^d$ and let $\bmX^{(d),*}$ and $\bmZ^{(d)}$ be as defined in \eqref{eq.proposals.X.W}. 
For almost all realisations $\{x_i\}_{i \geq 1}$ and $\{\aux_i\}_{i \geq 1}$ and $w \in \RR$, the following limit 
\begin{align}
\label{eq.limiting.Q.S}
\lim_{d \to \infty} \;
  \begin{bmatrix}
    q^{(d)}_{\Delta}(\bmx^{(d)}, \bmX^{(d),*})\\
    s^{(d)}_{\Delta}(\bmx^{(d)}, \bmX^{(d),*})
  \end{bmatrix}
\;=\;
  \begin{bmatrix}
    Q^{\infty}_{\Delta}\\
    S^{\infty}_{\Delta}
  \end{bmatrix}
\dist \Normal{
-\frac{\mu^2}{2} \,
  \begin{bmatrix}
  1\\
  -\beta_1
  \end{bmatrix}
  ,
\mu^2 \,
  \begin{bmatrix}
  1&-\beta_1\\
-\beta_1&\beta_2^2
  \end{bmatrix}
}
\end{align}
holds in distribution with parameters $\beta_1$ and $\beta_2$ defined in \eqref{eq.betas}.
\end{lem}
That the correlation is $-\beta_1/\beta_2 \in [-1,1]$ is another manifestation of inequality
\eqref{ineq.betas}. 
In general, Lemma \ref{lem.pivotal} shows that
if the approximating density has an average excess of (negative) curvature (i.e. $\beta_1 < 0$), the limiting random variables $Q^{\infty}_{\Delta}$ and $S^{\infty}_{\Delta}$ are positively correlated.

The product form Assumptions \eqref{eq.target.distribution} and
\eqref{eq.deterministic.log.error} from which we derive the bivariate Gaussian distribution in Lemma
\ref{lem.pivotal} are chosen for convenience. We expect the same
conclusions to hold, at least approximately, in much broader settings. Detailed, empirical verification of 
Lemma \ref{lem.pivotal} for the delayed-acceptance ODE Example \ref{example.ode} described in Equation \eqref{eq.ode.model} and for a delayed-accept pseudo-marginal example is provided in the \emph{Supplementary Material}, demonstrating the robustness of the results proved in this article.
\subsection{Limiting acceptance probability}
\label{sec.limiting.acc.proba} Since the acceptance rates of the DAPsMRWM can be very simply expressed in terms of the quantities defined in Equation \eqref{eq.def.q.s}, Lemma \ref{lem.pivotal} leads to tractable expression for the acceptance rates as $d \to \infty$. For $\bmx^{(d)} \in \RR^d$, the Stage-One acceptance rate can be expressed as
\begin{align*}
\alpha_1^{(d)}(\bmx^{(d)})
=
\EE\sqBK{
F\BK{ q^{(d)}_{\Delta}(\bmx^{(d)}, \bmX^{(d),*})  + s^{(d)}_{\Delta}(\bmx^{(d)}, \bmX^{(d),*}) }}
\end{align*}
where $F(u) = 1 \wedge \exp(u)$ is the Metropolis-Hastings accept-reject function. Similarly, for $(\bmx^{(d)}, w^{(d)}) \in \RR^d \times \RR$, the overall acceptance rate $\acc_{12}^{(d)}(\bmx^{(d)}, w^{(d)})$ can be expressed as the expectation of the product
\begin{align*}
F\BK{ q^{(d)}_{\Delta}(\bmx^{(d)}, \bmX^{(d),*})  + s^{(d)}_{\Delta}(\bmx^{(d)}, \bmX^{(d),*})}
\times F\left( W^{(d),*} - w^{(d)}  - s^{(d)}_{\Delta}(\bmx^{(d)}, \bmX^{(d),*})\right)
\end{align*}
where $W^{(d),*} \sim \pi_{W*}$ is independent from all the other sources of randomness. The following proposition, whose proof directly follows from Lemma \ref{lem.pivotal} and the dominated convergence theorem, gives the limiting values of these acceptance rates. Convergence is to be understood in the $L^2$ sense: for a sequence of random variables $\{V_k\}_{k \geq 0}$ and a constant $V_\infty \in \RR$, the notation $\lim_{k \to \infty} \, V_k \stackrel{L^2}{=} V_\infty$ indicates that $\EE[(V_k - V_\infty)^2] \to 0$ as $k \to \infty$.
\begin{prop} \label{prop.limi.accept.proba}
Let Assumptions \ref{ass.noise.diff.indep} and \ref{ass.regularity} hold.
For almost every realisation $\{\aux_i\}_{i \geq 1}$ of the sequence of auxiliary random variables used to describe the deterministic approximation \eqref{eq.deterministic.log.error}, we have 
\begin{align*}
\lim_{d \to \infty} \; \accado{\bmX^{(d)}} \; \stackrel{L^2}{=} \; \acc_1
\quad \textrm{and} \quad
\lim_{d \to \infty}  \acc_{12}^{(d)}(\bmX^{(d)}, W^{(d)}) \; \stackrel{L^2}{=} \; \acc_{12}
\end{align*}
where the limiting acceptance rates are given by
\begin{align*}
\left\{
\begin{aligned}
\alpha_1 
&= \Expect{ F\left( Q^{\infty}_{\Delta}  + S^{\infty}_{\Delta} \right) } \\
\alpha_{12} 
&= \Expect{  F\left( Q^{\infty}_{\Delta}  + S^{\infty}_{\Delta}\right) \times F\left( W_{\Delta}  - S^{\infty}_{\Delta} \right) }
\end{aligned} \right.
\end{align*}
for $(Q^{\infty}_{\Delta} ,S^{\infty}_{\Delta} )$ as described in \eqref{eq.limiting.Q.S} and $W_{\Delta} = W^*-W$ for $(W^*,W) \dist \pi_{W^*} \otimes \pi_{W}$.
The dependence of $\alpha_1$ and $\acc_{12}$ upon $(\mu,\beta_1,\beta_2,\pi_{W})$ is implicit.
\end{prop}
For the remainder of our discussion of acceptance rates we suppose
that Assumptions \ref{ass.noise.gauss} holds: there is additive Gaussian noise in the
logarithm of the stochastic approximation. We also make the
dependence of the acceptance rate on the approximation parameters,
$\beta_1$ and $\beta_2$, explicit.
Standard computations (e.g.  Proposition $2.4$ of \cite{Roberts/Gelman/Gilks:1997}) yield that, for $\xi \sim \Normal{\mu, \sigma^2}$, we have that $\Expect{ F(\xi)} = \Phi(\mu
/ \sigma) + \exp \left(\mu + \sigma^2/2 \right) \, \Phi(-\sigma - \mu/\sigma)$, with $\Phi:\RR \to (0,1)$ the standard Gaussian cumulative
distribution function. This permits straightforward evaluation of the Stage-One acceptance rate $\alpha_1(\mu;\beta_1,\beta_2) = \Expect{F(Q^\infty_\Delta+S^\infty_\Delta)}$, 
the overall acceptance rate
\begin{align}
\label{eq.acc.gauss}
\alpha_{12}(\mu,\sigma^2;\beta_1,\beta_2)
= 
\Expect{F(Q^\infty_\Delta+S^\infty_\Delta) \times F(W_\Delta-S^\infty_\Delta)},
\end{align}
as well as the ratio 
$\alpha_{2|1}(\mu,\sigma^2;\beta_1,\beta_2)
=
\alpha_{12}(\mu,\sigma^2;\beta_1,\beta_2)/\alpha_1(\mu;\beta_1,\beta_2)$,
in terms of standard functions and, for $\alpha_{12}$, a one-dimensional numerical integral, as detailed in the  {\it Supplementary Material}.
The limit as $\beta_1\to 0$ and $\beta_2 \to 0$ corresponds to the case when
there is no deterministic error and leads to the usual
\cite{Roberts/Gelman/Gilks:1997,mattingly2012diffusion} limiting
acceptance rate of $2 \times \Phi(- \mu / 2)$. For computing the limiting
overall acceptance rate, note that under the Gaussian Assumption
\ref{ass.noise.gauss} we have $W_\Delta \dist \Normal{ -\sigma^2, 2\sigma^2}$. 

The following result, proved in the {\it Supplementary Material}, shows that it is possible to
characterise the (unknown) values of $\mu$ and $\sigma^2$ in terms of
the Stage-One and the conditional Stage-Two acceptance rates. 
\begin{prop}
\label{prop.acc.rates.dec}
Let Assumptions \ref{ass.noise.diff.indep}, \ref{ass.noise.gauss} and
\ref{ass.regularity} hold.
\begin{enumerate}
\item For any $\beta_2>0$ and $\beta_1<1$ the Stage-One acceptance rate $\alpha_1(\mu;\beta_1,\beta_2)$ is a continuous
decreasing bijection in $\mu$ from $[0,\infty)$ to $(0,1]$.
\item
For any fixed $\mu, \beta_2>0$ and $\beta_1$,
the conditional  Stage-Two acceptance rate
$\alpha_{2|1}(\mu,\sigma;\beta_1,\beta_2)$ is a decreasing bijection in $\sigma$ from $[0,\infty)$ to $(0,\alpha_{2|1}(\mu,0;\beta_1,\beta_2)]$. 
\end{enumerate}
\end{prop}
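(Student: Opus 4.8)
The plan is to exploit the closed forms \eqref{eq.acc.1.gauss} and \eqref{eq.acc.2.gauss} together with the explicit expression $G(m,v)=\Phi(m/\sqrt v)+e^{m+v/2}\,\Phi(-\sqrt v - m/\sqrt v)$, reducing everything to the monotonicity of one-parameter families of Gaussian expectations of $F(u)=1\wedge e^u$. Two reductions organise the argument. First, the Stage One rate $\alpha_1$ does not depend on $\sigma$, so Part 1 concerns only the single-variable map $\mu\mapsto\alpha_1(\mu;\beta_1,\beta_2)$. Second, since $\alpha_{2|1}=\alpha_{12}/\alpha_1$ and $\alpha_1$ is $\sigma$-free, Part 2 is equivalent to showing that $\sigma\mapsto\alpha_{12}(\mu,\sigma;\beta_1,\beta_2)$ is a strictly decreasing bijection; writing $\alpha_{12}$ from \eqref{eq.acc.2.gauss} as $\Expect{A(\xi)\,g(\sigma;t(\xi))}$, where $A(\xi)\ge 0$ is the first $G$-factor (independent of $\sigma$), $t(\xi)=-\tfrac12\mu^2\beta_1-\mu\beta_2\,\xi$ and $g(\sigma;t):=G(t-\sigma^2,\,2\sigma^2)$, it then suffices to prove that $\sigma\mapsto g(\sigma;t)$ is strictly decreasing for every fixed $t\in\RR$ and tends to $0$ as $\sigma\to\infty$.

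The computational engine in both parts is a cancellation I would extract by differentiating $G$ through the formula above. Writing $\alpha_1$ with $a:=1-\beta_1>0$ and $b:=\sqrt{1+\beta_2^2-2\beta_1}>0$ (positivity from $\beta_1<1$ and $|\beta_1|\le\beta_2$, i.e. \eqref{ineq.betas}), the two Gaussian-density terms produced by the chain rule turn out to have identical exponents and therefore coincide, so the derivative collapses to $\frac{d}{d\mu}\alpha_1=-b\,\phi(c_1\mu)+(b^2-a)\,\mu\,e^{\kappa\mu^2}\,\Phi(-c_2\mu)$, with $c_1=a/2b$, $c_2=(2b^2-a)/2b$ and $\kappa=(b^2-a)/2$. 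The same mechanism applied to $g(\sigma;t)$ is cleaner still: with $s=\sqrt 2\,\sigma$ the two density terms again coincide and the two $\Phi'$-contributions cancel exactly, leaving $\frac{d}{ds}g=-\phi\!\left(\tfrac{t}{s}-\tfrac{s}{2}\right)<0$. This clean cancellation — a direct consequence of the unbiasedness constraint, which forces the mean of the Stage Two log-ratio to equal minus half its variance — immediately gives strict monotonicity in $\sigma$ of $g$, hence of $\alpha_{12}$ after differentiating under the $\xi$-expectation (justified since $A,g\in[0,1]$ with bounded $\sigma$-derivative). Combined with $g(0;t)=F(t)$ and $g(\sigma;t)\to0$ as $\sigma\to\infty$ (read off the explicit form), this yields Part 2, with range endpoint $\alpha_{2|1}(\mu,0;\beta_1,\beta_2)$.

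The main obstacle is the sign of $\frac{d}{d\mu}\alpha_1$ in Part 1, since the second term above is positive precisely when $\kappa>0$ (that is $\beta_2^2>\beta_1$). When $\kappa\le0$ the term is nonpositive and the derivative is manifestly negative. When $\kappa>0$ one has $2b^2-a=b^2+(b^2-a)>0$, so $c_2>0$, and I would control $\Phi(-c_2\mu)$ by the strict Mills-ratio bound $\Phi(-x)<\phi(x)/x$ for $x>0$; using the identity $e^{\kappa\mu^2}\phi(c_2\mu)=\phi(c_1\mu)$ this reduces the required inequality to $(b^2-a)/c_2<b$, equivalently to $a>0$, which holds since $\beta_1<1$. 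This gives strict monotonicity on $(0,\infty)$. It then remains to assemble the endpoints: continuity is clear, $\alpha_1(0)=F(0)=1$, and $\alpha_1(\mu)\to0$ as $\mu\to\infty$ (the $\Phi$-term vanishes, and the exponential term tends to $0$, dominated by its exponential factor when $\kappa\le0$ and, when $\kappa>0$, via Mills ratio by $\phi(c_1\mu)/(c_2\mu)$). A continuous, strictly decreasing function on $[0,\infty)$ with value $1$ at $0$ and limit $0$ at infinity is a bijection onto $(0,1]$, completing Part 1; the analogous assembly for $g$ and $\alpha_{12}$ finishes Part 2.
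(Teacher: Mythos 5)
Your proof is correct, but it splits into two cases relative to the paper: your Part 2 is essentially the paper's argument, while your Part 1 takes a genuinely different route. For the Stage One rate the paper first rescales $\mu$ (using $1+\beta_2^2-2\beta_1\ge(1-\beta_1)^2>0$) to reduce to the one-parameter family $h(\mu)=G(-\mu^2,c^2\mu^2)=\Expect{F(-\mu^2+c\,\mu\,\xi)}$, then differentiates under the expectation: since $F'(u)=e^u\,\mathbb{I}_{u<0}$, the integrand $F'(-\mu^2+c\mu z)\,(-2\mu+cz)$ is supported on $\{z<\mu/c\}$, where $-2\mu+cz<-\mu<0$; hence $h'<0$ with no case analysis and no tail bounds. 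You instead differentiate the closed form $\alpha_1(\mu)=\Phi(-c_1\mu)+e^{\kappa\mu^2}\Phi(-c_2\mu)$, exploit the identity $e^{\kappa\mu^2}\phi(c_2\mu)=\phi(c_1\mu)$ (your computation amounts to $c_2^2-c_1^2=2\kappa$, which is right, and your derivative formula $-b\,\phi(c_1\mu)+(b^2-a)\,\mu\,e^{\kappa\mu^2}\,\Phi(-c_2\mu)$ checks out), and when $\kappa>0$ control the positive term with the Mills-ratio bound $\Phi(-x)<\phi(x)/x$, reducing the sign to $a=1-\beta_1>0$. This is correct but costs a case split on the sign of $\kappa$; one tiny imprecision is the boundary case $\kappa=0$, where the limit $\alpha_1(\mu)\to0$ follows not from the (constant) exponential factor but from $c_2=b/2>0$ — trivially repairable. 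In exchange, your route yields an explicit expression for $d\alpha_1/d\mu$ and explicit tail rates, which the paper's probabilistic argument does not provide. For Part 2, the paper likewise computes that $\sigma\mapsto G(a-\sigma^2,2\sigma^2)$ has derivative exactly $-\sqrt{2}\,\phi\left(-\sigma/\sqrt{2}+a/(\sigma\sqrt{2})\right)<0$ and then differentiates under the expectation over $\xi$; your cancellation in the variable $s=\sqrt{2}\,\sigma$ is the same computation, and you additionally spell out the endpoint behaviour $g(0;t)=F(t)$ and $g(\sigma;t)\to0$, which the paper treats as immediate.
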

\noindent
For the DARWM algorithm, $W_\Delta\equiv0$. Therefore, Equation \eqref{eq.acc.gauss} yields that
$\alpha_{12}(\mu,0;\beta_1,\beta_2)= 
\Expect{F(Q^\infty_\Delta+S^\infty_\Delta) \times F(-S^\infty_\Delta)}$,
%
%
which can be evaluated via a one-dimensional numerical integral. 
When $\beta_1=\beta_2^2$, which necessitates $\beta_2\le 1$ by \eqref{ineq.betas}, 
$\Cov{Q^\infty_\Delta+S^\infty_\Delta,-S^{\infty}_\Delta}=0$ so that the random variables 
$Q^\infty_\Delta+S^\infty_\Delta$ and $W_\Delta-S^{\infty}_\Delta$ are independent. In that case, algebra gives that
\begin{align}
\label{eq.alpha2g1uncor}
\begin{aligned}
\alpha_{2|1}(\mu,\sigma^2;\beta_2^2,\beta_2)
&= 
2\Phi\left(-\frac{1}{2}\sqrt{\beta_2^2\mu^2+2\sigma^2}\right).
\end{aligned}
\end{align}
This is the limiting acceptance probability of a pseudo-marginal RWM algorithm with a scaling of $\beta_2 \, \mu$ and a noise variance of $\sigma^2$ \cite[see][]{SherlockThieryRobertsRosenthal:2013}. 
Substituting $\sigma^2=0$ into \eqref{eq.alpha2g1uncor}, we find that for the DARWM, 
$\alpha_{2|1}(\mu,0;\beta_2^2,\beta_2)=2\Phi(-\beta_2\mu/2)$, the limiting acceptance probability for a RWM algorithm with a scaling of $\beta_2 \, \mu$ \cite[see][]{Roberts/Gelman/Gilks:1997}. 
In Section \ref{sec.optimisation} the insights arising from this phenomenon 
help to motivate our approach to understanding the efficiency and tuning of the DARWM and DAPsMRWM algorithms.

%
%
\subsection{Limiting expected squared jumping distance}
A standard measure of efficiency \cite{Sherlock/Roberts:2009,BeskosRobertsStuart:2009,Sherlock:2013} for local algorithms is
the Euclidian Expected Squared Jumping Distance ($\esjd$); see \cite{roberts2014minimising,pasarica2010adaptively} for detailed discussions.
Theoretical motivations for our use of the $\esjd$ are given by the diffusion approximation described in Section \ref{sec.diffusion.approx}.
In our $d$-dimensional setting, it is defined as
\begin{align}
\esjd^{(d)} = \EE \Big[ \, \big\| \bmX^{(d)}_{k+1} - \bmX^{(d)}_{k} \big\|^2 \, \Big]
\end{align}
where the Markov chain $\left\{ (\bmX^{(d)}_k, W^{(d)}_k\right\}_{k \geq 0}$
is stationary and $\| \cdot \|$ is the standard Euclidian norm.
%
%
\begin{prop} \label{prop.limiting.esjd}
Let Assumptions \ref{ass.noise.diff.indep} and \ref{ass.regularity} hold and let $\alpha_{12}$ be the limit identified in Proposition \ref{prop.limi.accept.proba}. 
For almost every realisation $\{\aux_i\}_{i \geq 1}$,
\begin{align} \label{eq.limiting.esjd}
\lim_{d \to \infty} \esjd^{(d)} = \alpha_{12} \times \left( \frac{\mu}{I} \right)^2 \equiv J(\mu)
\end{align}
The dependence of the limiting expected squared jumping distance $J(\mu)$ upon $(\beta,\pi_{W})$ is implicit.
\end{prop}
\subsection{Diffusion limit} \label{sec.diffusion.approx}
We wish to prove that the DAPsMRWM algorithm in high dimensions can be
well-approximated by an appropriate diffusion limit as this provides
theoretical underpinning to our use of the $\esjd$ as measure of
efficiency \cite{bedard2012scaling,roberts2014minimising}. The
connection between $\esjd$ and diffusions arises because the
asymptotic jumping distance $\lim_{d \to \infty} \,
\esjd^{(d)}=J(\mu)$ is equal to the square of the limiting process's
diffusion coefficient and is proportional to the drift coefficient. By a simple time change argument, the asymptotic
variance of {\em any} Monte Carlo estimate of interest is inversely
proportional to $J(\mu)$. Consequently, $J(\mu)$ becomes, at least in the high-dimensional limit $d \to \infty$, unambiguously
the right quantity to optimise.

It is important to stress that the existence of the diffusion limit in this
argument cannot be circumvented. MCMC algorithms which have non-diffusion
limits can behave in very different ways and ESJD may not be a natural way
to compare algorithms.
The main result of this section is a diffusion 
limit for a rescaled version $V^{(d)}$ of the first coordinate process. For time $t \geq 0$ we define 
the piecewise constant continuous time process
\begin{align}
V^{(d)}(t) \equiv X^{(d)}_{\floor{d \times t},1} \ .
\end{align}
with the notation $\bmX^{(d)}_k = (X^{(d)}_{k,1}, \ldots, X^{(d)}_{k,d}) \in \RR^d$. In general, the process $V^{(d)}$ is not Markovian; the next theorem shows nevertheless that in the limit $d \to \infty$ the process $V^{(d)}$ 
can be approximated by a Langevin diffusion.
%
%
\begin{thm} \label{thm.diff.lim}
Let Assumptions \ref{ass.noise.diff.indep} and \ref{ass.regularity} hold.
Let $T>0$ be a finite time horizon and suppose that for all $d \geq 1$ the DAPsMRWM Markov chain  starts at stationarity, $(\bmX^{(d)}_0, W^{(d)}_0) \dist \pi^{(d)} \otimes \pi_{W}$.  Then, as $d \to \infty$, the sequence of processes $V^{(d)}$ converges weakly to $V$ in the Skorokhod topology on $D([0,T], \RR)$ where the diffusion process $V$ satisfies the Langevin stochastic differential equation
\begin{align}
\label{e.limiting.diffusion}
dV_t = \frac12 \, J(\mu) \, (\log \pi)'(V_t) \, dt + J^{1/2}(\mu) \, dB_t
\end{align}
with initial distribution $V_0 \dist \pi$. The process $B_t$ is a standard scalar Brownian motion.
\end{thm}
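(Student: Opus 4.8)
The plan is to prove weak convergence via convergence of generators combined with the martingale-problem characterisation of the limit. The generator of \eqref{e.limiting.diffusion} is $\Gen f = \tfrac12 J(\mu) f'' + \tfrac12 J(\mu)\, \ell'\, f'$, and this diffusion admits a unique weak solution because the diffusion coefficient is constant and $\ell'$ is Lipschitz by Assumption \ref{ass.regularity}. Since $V^{(d)}$ is not itself Markov, I would work with the full chain $(\bmX^{(d)}, W^{(d)})$, which is Markov and reversible with respect to $\pi^{(d)} \otimes \pi_W$, and apply its one-step generator to test functions $\hat f(\bmx,w) = f(x_1)$ depending on the first coordinate only, with $f$ ranging over a core such as $C_c^\infty$. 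Writing the one-step change of the first coordinate as $(\mu/I) d^{-1/2} Z_1$ on acceptance and $0$ otherwise, the rescaled generator is $\Gen^{(d)} f = d\, \EE[(f(x_1^*) - f(x_1))\, F(A^{(d)}) F(B^{(d)})]$, where $A^{(d)} = \mathsf{Q}^{(d)}_\Delta + \mathsf{S}^{(d)}_\Delta$, $B^{(d)} = \mathsf{W}^{(d)}_\Delta - \mathsf{S}^{(d)}_\Delta$, and the expectation is over $\bmZ^{(d)}$ and $W^{(d),*}$. The target is $\Gen^{(d)} f \to \Gen f(x_1)$ in $L^1(\pi^{(d)}\otimes\pi_W)$.

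A second-order Taylor expansion gives $f(x_1^*) - f(x_1) = (\mu/I)d^{-1/2} Z_1 f'(x_1) + \tfrac12 (\mu/I)^2 d^{-1} Z_1^2 f''(x_1) + O(d^{-3/2})$, so $\Gen^{(d)} f$ splits into a diffusion term $\tfrac12 (\mu/I)^2 f''(x_1)\, \EE[Z_1^2 F(A^{(d)})F(B^{(d)})]$ and a drift term $d^{1/2}(\mu/I) f'(x_1)\, \EE[Z_1 F(A^{(d)})F(B^{(d)})]$. For the diffusion term, the first coordinate contributes only $O(d^{-1/2})$ to both $A^{(d)}$ and $B^{(d)}$, so $F(A^{(d)})F(B^{(d)})$ becomes asymptotically independent of $Z_1$; the concentration argument underlying Lemma \ref{lem.pivotal} and Proposition \ref{prop.limi.accept.proba} then gives $\EE[Z_1^2 F(A^{(d)})F(B^{(d)})] \to \alpha_{12}$, so the diffusion term converges to $\tfrac12 (\mu/I)^2 \alpha_{12} f''(x_1) = \tfrac12 J(\mu) f''(x_1)$.

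The drift term is the key step. Splitting off the order-$d^{-1/2}$ first-coordinate contributions $q_1 = \ell'(x_1)(\mu/I)d^{-1/2}Z_1 + O(d^{-1})$ and $s_1 = \partial_x \SS(x_1,\gamma_1)(\mu/I)d^{-1/2}Z_1 + O(d^{-1})$ and expanding $F(A^{(d)})F(B^{(d)})$ to first order, the drift term converges to $(\mu/I)^2 f'(x_1)\{\ell'(x_1)\EE[F'(A)F(B)] + \partial_x\SS(x_1,\gamma_1)(\EE[F'(A)F(B)] - \EE[F(A)F'(B)])\}$, where $(Q^\infty_\Delta,S^\infty_\Delta)$ is the Gaussian limit of Lemma \ref{lem.pivotal}, $W_\Delta = W^* - W$ as in Proposition \ref{prop.limi.accept.proba}, and $A = Q^\infty_\Delta + S^\infty_\Delta$, $B = W_\Delta - S^\infty_\Delta$. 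The potentially troublesome $\SS$-dependent term vanishes and the remaining coefficient equals exactly $\tfrac12\alpha_{12}$, both facts following from the reversibility identity $\EE[G(A,B)] = \EE[e^{A+B}G(-A,-B)]$. This identity holds because each coordinate's pair $(Q_i,S_i)$ satisfies $\EE[g(Q_i,S_i)] = \EE[e^{Q_i}g(-Q_i,-S_i)]$ (symmetry of the RWM proposal at stationarity), a relation that passes to the Gaussian limit, while $W_\Delta$ satisfies $\EE[h(W_\Delta)] = \EE[e^{W_\Delta}h(-W_\Delta)]$ by $d\pi_W/d\pi_{W^*}(w) = e^w$ from \eqref{eq.change.W.W}; since $A+B = Q^\infty_\Delta + W_\Delta$, the two combine into the master identity. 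Writing $F(u) = \mathbf{1}\{u>0\} + F'(u)$ a.e., with $F'(u) = e^u \mathbf{1}\{u<0\}$, the master identity yields both $\EE[F'(A)F(B)] = \EE[F(A)F'(B)]$ (killing the $\SS$ term) and $\EE[F'(A)F(B)] = \tfrac12 \EE[F(A)F(B)] = \tfrac12\alpha_{12}$. Hence the drift term converges to $\tfrac12 J(\mu)\, \ell'(x_1) f'(x_1)$, and altogether $\Gen^{(d)} f \to \Gen f(x_1)$.

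It then remains to upgrade these pointwise limits to $L^1(\pi^{(d)}\otimes\pi_W)$ and to assemble the weak limit. The remainder terms are controlled uniformly using the boundedness of the first three $x$-derivatives of $\ell$ and $\SS$ (Assumption \ref{ass.regularity}), which dominates all $O(d^{-1/2})$ and $O(d^{-1})$ errors by integrable quantities; the state-dependent coefficients such as $\EE[F'(A^{(d)})F(B^{(d)}) \mid \bmX^{(d)}]$ converge to constants in $L^1$ by the same law-of-large-numbers-across-coordinates argument that underlies Lemma \ref{lem.pivotal} and Proposition \ref{prop.limi.accept.proba}. Finally, because $V^{(d)}$ is non-Markov, I would establish tightness of $\{V^{(d)}\}$ in $D([0,T],\RR)$ via an Aldous-type moment criterion, using that each increment is $O(d^{-1/2})$ and at most one coordinate moves per step, and then show that every subsequential limit solves the martingale problem for $\Gen$ with initial law $\pi$; uniqueness of that solution identifies the limit as $V$. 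The main obstacle is this last assembly: transferring the $L^1$ generator convergence into the asymptotic-martingale property for the non-Markovian $V^{(d)}$ and controlling the remainder accumulated over the $\floor{dT}$ steps, where the stationarity of the environment and the uniform bounds of Assumption \ref{ass.regularity} must be combined carefully.
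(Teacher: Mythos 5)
There is a genuine gap, and it sits exactly at the step your proposal treats as established before the ``final assembly'': the claim that the one-step generator converges, $\Gen^{(d)} f \to \Gen f(x_1)$ in $L^1(\pi^{(d)}\otimes\pi_W)$. This is false. In the one-step generator the current noise value $w$ is held \emph{fixed} (only $\bmZ^{(d)}$ and $W^{(d),*}$ are averaged), so after your Taylor/Stein computation the drift term converges not to $\tfrac12 J(\mu)\,\ell'(x_1)f'(x_1)$ but to $(\mu/I)^2\bigl\{A(w)\,\ell'(x_1) + [A(w)-B(w)]\,\partial_x\SS(x_1,\gamma_1)\bigr\}f'(x_1)$, where $A(w) = \EE[F'(Q^{\infty}_{\Delta}+S^{\infty}_{\Delta})\,F(W^*-w-S^{\infty}_{\Delta})]$ and $B(w)$ is its counterpart, exactly as in \eqref{eq.A.B}; the diffusion term has the same defect. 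Your two ``master identity'' consequences, $\EE[F'(A)F(B)]=\EE[F(A)F'(B)]$ and $\EE[F'(A)F(B)]=\tfrac12\alpha_{12}$, silently replace the fixed $w$ by a fresh $W\dist\pi_W$ and average over it; in that averaged form they are precisely Equation \eqref{eq.chris.lemma}, $\EE[A(W)]=\EE[B(W)]=\tfrac12\alpha_{12}$, but they do \emph{not} hold pointwise in $w$: $A(\cdot)$ is strictly decreasing with $A(w)\to 0$ as $w\to+\infty$, so $A(w)\not\equiv\tfrac12\alpha_{12}$ and $A(w)\not\equiv B(w)$, and in particular the $\partial_x\SS$ term does not vanish for fixed $w$. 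The noise is a persistent component of the state (it is refreshed only on acceptance), so it cannot be integrated out within a single step; only in the noiseless DARWM special case does this issue disappear and your argument collapse back to the classical Roberts--Gelman--Gilks one.

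Because of this, the assembly step fails as described: the asymptotic martingale property requires $\frac1d\sum_{k\le dT}\bigl\{\A f(X^{(d)}_{k,1},W^{(d)}_k)-\Gen f(X^{(d)}_{k,1})\bigr\}\to 0$, and the summands are mean-zero under stationarity but not individually small, so one needs an ergodic-averaging (homogenization) argument for the noise chain $\{W^{(d)}_k\}$ --- whose mixing is itself delicate precisely because refreshment is tied to acceptance. This two-time-scale argument is the heart of the paper's proof and is absent from yours: the paper subsamples at an intermediate scale $T^{(d)}=\floor{d^{\gamma}}$, $\gamma\in(0,1/4)$, uses the telescoping identity \eqref{eq.telescoping} to write the subsampled generator as a time average of one-step generators, proves the $w$-dependent approximation $\LL^{(d)}\phi\approx\A\phi$ (Lemma \ref{lem.generator.approx}), and then proves a coupling-based ergodic averaging result (Lemma \ref{lem.averaging}) giving $\tfrac{1}{T^{(d)}}\sum_k \A\phi(x,W^{(d)}_k)\to\EE_{\pi_W}[\A\phi(x,W)]=\LL\phi(x)$; generator convergence in $L^2$ holds only for the subsampled process $\wtilde{V}^{(d)}$, and the theorem follows by additionally showing $\|V^{(d)}-\wtilde{V}^{(d)}\|_{\infty,[0,T]}\to 0$. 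Your ingredients (the Taylor expansion, the Stein identity, and the reversibility identities, which reappear in Lemma \ref{lem.generator.approx} and Appendix \ref{appendix.chris.lemma}) are the right ones, but without the intermediate-time-scale averaging the proof does not go through.
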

As with Propositions \ref{prop.limi.accept.proba} and
\ref{prop.limiting.esjd}, the Gaussian Assumption
\ref{ass.noise.gauss} is not necessary for the conclusion of Theorem
\ref{thm.diff.lim} to hold. The proof can be found in Section
\ref{sec.proof.diff.lim} of the {\it Supplementary Material}. It relies on an homogenization 
argument necessary to average-out the rapidly mixing $\bmu$-process.
Theorem \ref{thm.diff.lim} shows that the
rescaled first coordinate process converges to a Langevin diffusion
$V$ that is a time-change of the diffusion $d \overline{V}_t = 
\frac12 \, (\log \pi)'( \overline{V}_t) \, dt + dB_t$; indeed, $t \mapsto V_{t}$
has the same law as $t \mapsto \overline{V}_{J(\mu) \, t}$. This
reveals that when speed of mixing is measured in terms of the number
of iterations of the algorithm, the higher $J(\mu)$, the faster the mixing of the Markov
chain.  See \cite{roberts2014complexity} for a detailed
discussion and rigorous results. However any measure of overall
efficiency should
also take into account the computational time required for each
iteration of the algorithm, and this is the subject of the next section.

%
%
\section{Optimising the efficiency}
\label{sec.optimisation}
When examining the efficiency of a standard RWM the computational time is usually either
not taken into account or is implicitly supposed to be independent of the choice of tuning
parameter(s). In any delayed-acceptance scenario, the computational time depends on the number of acceptances at Stage-One; furthermore, in any pseudo-marginal setting the computational time also depends on the variance of the stochastic estimate of $\log \pi$. 
For this article, we measure the efficiency through a rescaled version of the expected squared jump distance,
\begin{align} \label{def.efficiency.rough}
\textrm{(Efficiency)} \equiv \frac{\textrm{(Expected Squared Jump Distance)}}{\textrm{(Averaged one-step computing time)}} .
\end{align}
The quantity 
$\mathscr{F}(\esjd) / \textrm{(Averaged one step computing time)}$, for any increasing function $\mathscr{F}$, is a valid measure of efficiency \cite[e.g.][]{Pittetal:2012,Doucetetal:2013,SherlockThieryRobertsRosenthal:2013,GolightlyHendersonSherlock:2013}; the discussion at the start of Section 
\ref{sec.diffusion.approx} reveals nonetheless, because of the diffusion approximation proved in Theorem \ref{thm.diff.lim}, that  \eqref{def.efficiency.rough} is the essentially unique measure of efficiency valid in the high-dimensional asymptotic regime considered in this article.
Proposition \ref{prop.limiting.esjd} shows that the limiting $\esjd$ equals $\alpha_{12} \times (\mu / I)^2$ where $I$, defined in Equation \eqref{eq.I}, is a constant irrelevant for the optimisation of the efficiency discussed in this section; the constant also appears in the same form in the limiting $\esjd$ for the equivalent non-delayed acceptance algorithm, and so it may also safely be ignored when calculating relative efficiencies. We examine the efficiency of the DARWM first, then move on to the DAPsMRWM. 

\subsection{Delayed-acceptance random walk Metropolis}
\label{sec.DARWMEFF}
For the DARWM we define an evaluation of $\pi$ as taking one unit of time and 
define $\eta>0$ to be the time for an evaluation of $\pi_a$: the one-step cost 
of a DARWM algorithm is $\eta+\alpha_1$. 
Following Equation \eqref{def.efficiency.rough} and eliminating unnecessary constants, the limiting efficiency of the DARWM can be quantified by
the following efficiency functional
\begin{align} \label{eq.efficiency.functional.noW}
\eff_{\textrm{da}}(\mu) = \frac{\mu^2 \, \alpha_{12}(\mu, 0)}{\eta \,  + \alpha_1(\mu)},
\end{align}
with the dependence upon $\beta_1$ and $\beta_2$ implicit. In the limit where $\pi_a\equiv \pi$ and $\eta=0$ the movement of the Markov chain becomes that of a RWM on $\pi$, but the efficiency reduces to $\mu^2$ rather than the $\mu^2\alpha_1$ of a RWM because, in this limit, only acceptances are associated with a computational cost.
Using the same timescale, the efficiency of the RWM is
$\eff_{\textrm{rwm}}(\mu)\equiv2\mu^2\Phi(-\mu/2)$ \cite{Roberts/Gelman/Gilks:1997}, which is
optimised at $\mu=\muhat_{\textrm{rwm}}\approx 2.38$. We therefore define
the relative efficiency of the DARWM algorithm to the optimal
efficiency of the RWM:
\begin{align}
\label{eqn.eff.rel.CF}
\eff^{\textrm{rel}}_{\textrm{da}}(\mu)\equiv
\frac{\eff_{\textrm{da}}(\mu)}{\eff_{\textrm{rwm}}(\muhat_{\textrm{rwm}})}.
\end{align}
In the special case of $\beta_1=\beta_2^2$, and as investigated in and around \eqref{eq.alpha2g1uncor}, 
\begin{align}
  \label{eqn.simple.special}
\eff^{\textrm{rel}}_{\textrm{da}}(\mu;\beta_2^2,\beta_2)
=
\frac{\mu^2}{\muhat^2_{\textrm{rwm}}}
\frac{\alpha_1(\mu;\beta_1,\beta_2) \, \Phi(-\mu\beta_2/2)}
{(\eta+\alpha_1(\mu;\beta_1,\beta_2)) \, \Phi(-\muhat_{\textrm{rwm}}/2)}.
\end{align}
In the limit as $\eta\downarrow 0$,
the efficiency is maximised 
at $\muhat_{\textrm{da}}=\muhat_{\textrm{rwm}}/\beta_2$, giving an overall relative efficiency of $\eff^{\textrm{rel}}_{\textrm{da}}(\muhat_{\textrm{da}})=1/\beta_2^2$. 
In reality, $\eta>0$, and if $\mu$ is large enough so that $\alpha_1(\mu) \lesssim \eta$ then
$\mu^2\alpha_{12}(\mu;\beta_1,\beta_2)$ decreases rapidly with $\mu$, as does the efficiency.
This suggests that $\alpha_{2|1}(\muhat_{\textrm{rwm}};\beta_1,\beta_2)$ might 
provide insight into the optimal scaling, $\muhat_{\textrm{da}}$, 
provided that $\eta>0$ is also taken into account. 
Figure \ref{fig.effRWM.betas} shows $\alpha_{2|1}(\muhat_{\textrm{rwm}};\beta_1,\beta_2)$ and 
$\muhat_{\textrm{da}}/\muhat_{\textrm{rwm}}$ 
as functions of $\beta_1$ and $\beta_2$ when $\eta=0.01$. 
The shapes of the contours are almost identical (as is the shape for $\eff^{\textrm{rel}}_{\textrm{da}}(\muhat_{\textrm{da}})$, though not shown), indicating 
that \emph{whatever the values of $\beta_1$ and $\beta_2$}, the quantity
$\alpha_{2|1}(\muhat_{\textrm{rwm}};\beta_1,\beta_2)$ provides information 
on the optimal increase in scaling, relative to the optimal scaling for the RWM, as well as 
the corresponding increase in efficiency.
Along the line where $\beta_1=\beta_2^2$, as predicted, at $\beta_2=1$, $\muhat_{\textrm{da}}\approx\muhat_{\textrm{rwm}}/\beta_2$, but, since $\eta>0$, as $\beta_2$ decreases the optimal scaling increase less quickly than \eqref{eqn.simple.special} suggests.
\begin{figure}
\begin{center}
\subfigure{
  \includegraphics[width = 0.4 \textwidth,angle=0]{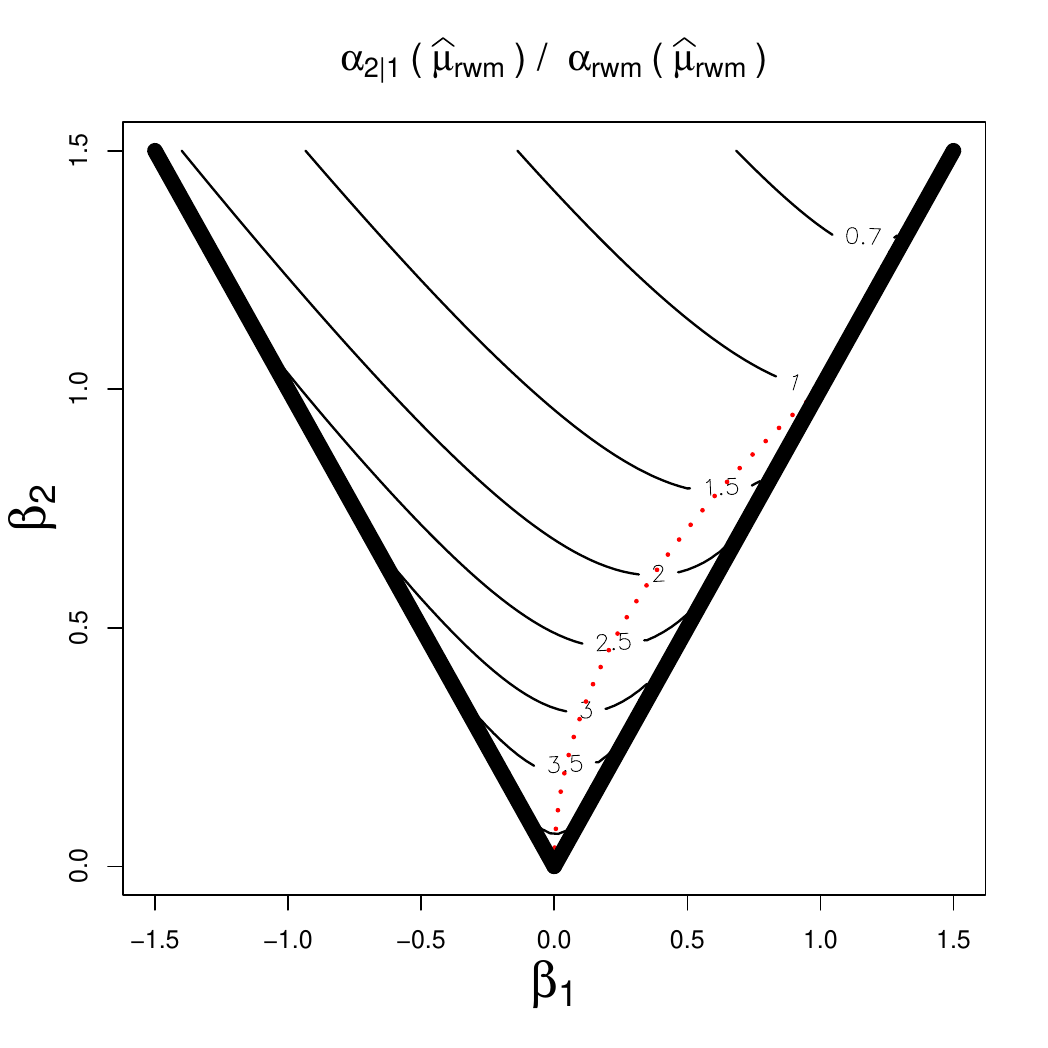}
}
\subfigure{
  \includegraphics[width = 0.4 \textwidth,angle=0]{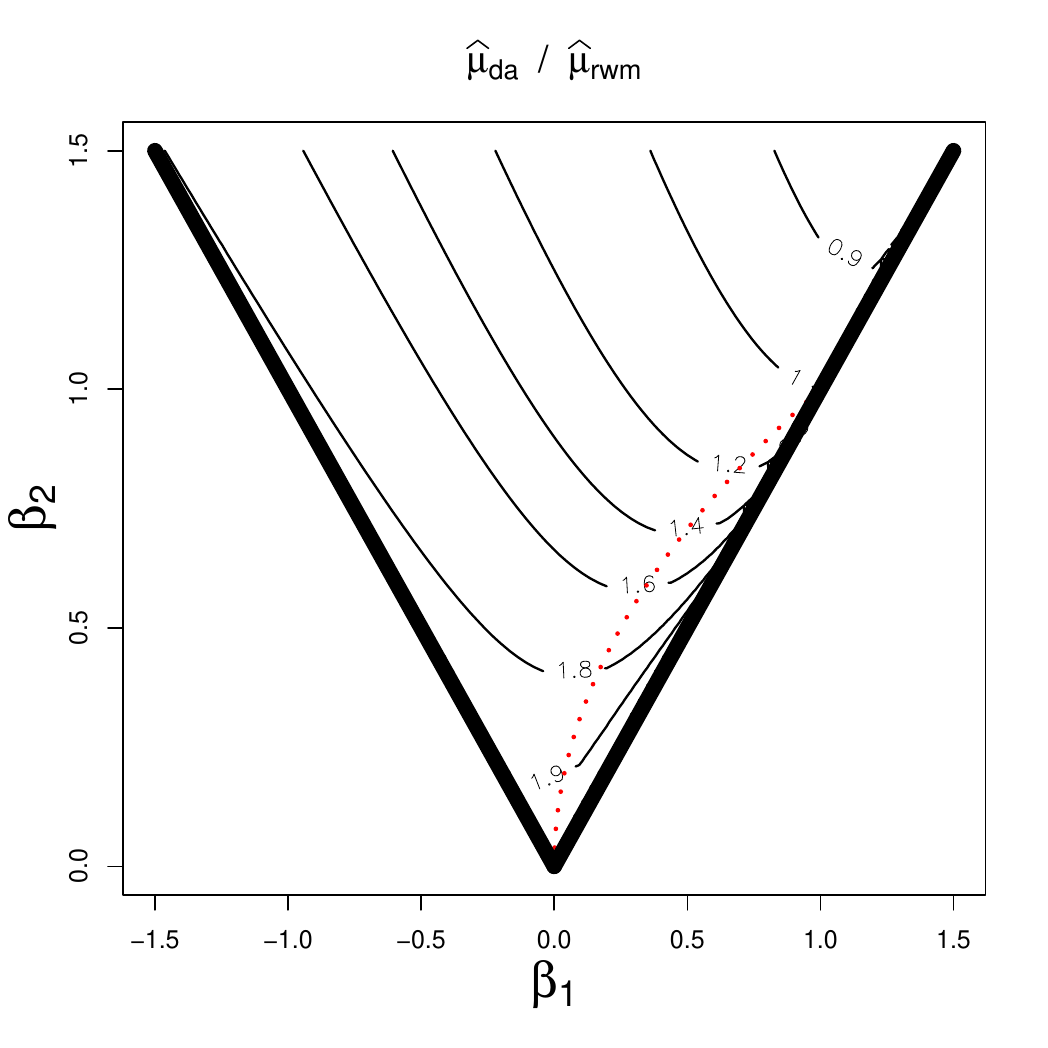}
}
\caption{
Contour plots of $\alpha_{2|1}(\muhat_{\textrm{rwm}},0;\beta_1,\beta_2)$ (left) and  $\muhat_{\textrm{da}}/\muhat_{\textrm{rwm}}$ (right), 
 as a
function of $\beta_1$ and $\beta_2$ for $\eta=0.01$. The red, dotted line satisfies $\beta_1=\beta_2^2$.
\label{fig.effRWM.betas}
}
\end{center}
\end{figure}

Figure
\ref{fig.da.optimal.lambda} (left)
is in fact a plot of $\muhat_{\textrm{da}}/\muhat_{\textrm{rwm}}$  
vs $\alpha_{2|1}(\muhat_{\textrm{rwm}})/\alpha_{\textrm{rwm}}(\muhat_{\textrm{rwm}})$ over the fine grid of values of $(\beta_1,\beta_2)$ used to create Figure \ref{fig.effRWM.betas}. Since $\mu\propto\lambda$, from \eqref{eq.proposal.high.dim}, 
 $\muhat_{\textrm{da}}/\muhat_{\textrm{rwm}}\equiv\lambdahat_{\textrm{da}}/\lambdahat_{\textrm{rwm}}$; the constant of proportionality is unknown, which is why we provide a graph for the ratio.
It suggests that $\alpha_{2|1}(\muhat_{\textrm{rwm}})$ combined with $\eta$ does indeed provide information on the relative increase in scaling needed over $\muhat_{\textrm{rwm}}$.

\subsection{Delayed-acceptance pseudo-marginal RWM}
\label{sec.dapmrwm}
For the DAPsMRWM we define an evaluation of $\pih$ with $\sigma^2=1$ as taking one unit of time, 
and $\eta>0$ is defined to be the time for an evaluation of $\pi_a$ on this scale. Under Assumption \ref{ass.effort.vs.variance}, the average time needed to compute the stochastic approximation is inversely proportional to the variance, $\sigma^2$, of the estimate of the log-target,
which leads to an average computational time for a single iteration of the algorithm of:
$\eta + \alpha_1 / \sigma^{2}$.
%
%
As discussed in Section \ref{sec.stoch.approx}, 
Assumption \ref{ass.effort.vs.variance} is reasonable when using particle MCMC to perform inference on the parameters of a hidden-Markov model, or when analysing panel data using a product of importance sampling estimators. Hence, we simplify notation and refer to the resulting efficiency as that of a Delayed-Acceptance Particle-Marginal method. Our efficiency functional is, therefore,

%
%
\begin{align} \label{eq.efficiency.functional}
\eff_{\textrm{dapm}}(\mu, \sigma^2) = \frac{\mu^2 \, \sigma^2 \, \alpha_{12}(\mu, \sigma)}{\eta \, \sigma^2 + \alpha_1(\mu)}.
\end{align}
Theorem \ref{thrm.behave.eff}, which is proved in the {\it Supplementary Material}, shows that $\eff_{\textrm{dapm}}(\mu, \sigma^2)$ possesses intuitive limiting properties.
\begin{thm}
\label{thrm.behave.eff}
Let Assumptions \ref{ass.noise.gauss}, \ref{ass.effort.vs.variance} and \ref{ass.regularity} hold. Then:
\begin{enumerate}
\item
For a fixed variance $\sigma^2>0$, $\eff_{\textrm{dapm}}(\mu, \sigma^2) \to 0$ as $\mu \to 0$ or $\mu \to \infty$.
\item
For a fixed jump size $\mu>0$, $\eff_{\textrm{dapm}}(\mu, \sigma^2) \to 0$ as $\sigma^2 \to 0$ or $\sigma^2 \to \infty$.
\end{enumerate}
\end{thm}
Using the same time scale as in \eqref{eq.efficiency.functional}, the
equivalent efficiency function for the Particle-Marginal RWM is
$\eff_{\textrm{pm}}(\mu,\sigma^2)\equiv2\mu^2\sigma^2\Phi\left(-\frac{1}{2}\sqrt{\mu^2+2\sigma^2}\right)$,
and this
is maximised at $\muhat_{\textrm{pm}} \approx 2.562$ and
$\sigmahat_{\textrm{pm}}^2\approx 3.283$
\cite[]{SherlockThieryRobertsRosenthal:2013}. 
Thus, we  define the efficiency of the DAPsMRWM relative to the maximum
achievable efficiency of the Particle-Marginal RWM as:
\begin{align}
\label{eqn.eff.rel.PM}
\eff^{\textrm{rel}}_{\textrm{dapm}}(\mu,\sigma^2)\equiv
\frac{\eff_{\textrm{dapm}}(\mu,\sigma^2)}{\eff_{\textrm{pm}}(\muhat_{\textrm{pm}},\sigmahat_{\textrm{pm}}^2)}.
\end{align}
An argument analogous to the one used for analyzing the DARWM suggests that $\alpha_{2|1}(\muhat_{\textrm{pm}}\sigmahat^2_{\textrm{pm}})$ and $\eta>0$ together should be informative on $\muhat_{\textrm{dapm}}$ and 
$\sigmahat^2_{\textrm{dapm}}$. 
Analogous contour plots to those in Figure \ref{fig.effRWM.betas}, provided in the {\it Supplementary Material}, 
show the same key property. Scatter plots of $\muhat_{\textrm{dapm}}$ and $\sigmahat^2_{\textrm{dapm}}$ 
against $\alpha_{2|1}(\muhat_{\textrm{pm}},\widehat{\sigma^2}_{\textrm{pm}})$ partitioned by $\eta$, analogous to Figure \ref{fig.da.optimal.lambda} (left), are provided in the {\it Supplementary Material}. Again the combination of known quantities provides insight on the optimal relative tunings of the DA parameters compared with their non-DA optimal values. An equivalent  efficiency plot suggests that it is not worth implementing a DAPsMRWM algorithm if $\pi_a$ is only ten times faster to evaluate than $\pih$ is with $\sigma^2=1$.

As discussed in Section \ref{sect.lit.review}, an alternative tuning methodology 
relies on the property of the Particle-Marginal RWM algorithm that the optimal $\mu$ for a given $\sigma^2$, $\muhat(\sigma^2)$, is almost invariant to $\sigma^2$ \cite{SherlockThieryRobertsRosenthal:2013,Sherlock:2015}. This effectively reduces a two-dimensional optimisation 
problem to two one-dimensional problems. 
Contour plots in the \emph{Supplementary Material} of $\eff^{\textrm{rel}}_{\textrm{dapm}}$ as a function
of $\mu$ and $\sigma^2$ for specific combinations of
$\beta_2 \ge 0$, $\Abs{\beta_1}<\beta_2$ and $\eta>0$. 
all show a single mode and also show that for a particular variance, the optimal scaling 
$\muhat(\sigma)$ is insensitive to the value of $\sigma$, except 
when $\sigma\lesssim 1$, at which point the optimal scaling increases. 
Provided the noise variance is not made too small, therefore, 
$\mu$ and $\sigma$ may also be tuned independently for the DAPsMRWM.

\subsection{Tuning guidelines}
Theorem \ref{thrm.behave.eff} suggests that our goal of finding 
the optimal scaling $\muhat_{\textrm{da}}>0$ or, for the DAPsMRWM, $\muhat_{\textrm{dapm}}>0$ and $\sigmahat_{\textrm{dapm}}^2 > 0$, is sensible.
However, $\beta_1$, $\beta_2$ and $I$ arise from an idealisation of the form of the target distribution, and the dependence of quantities of interest on these parameters arises from a limiting argument as $d\rightarrow \infty$. In reality, the quantities $\beta_1$ and $\beta_2$ and $I$ might not exist. Even if they did exist, their values would not be known. This is why the tuning guidelines in Section \ref{sec.DARWMtune} use features that appear to be approximately independent of the specific values of $\beta_1$ and $\beta_2$, and for which $I$ is irrelevant. In addition to the two examples in Section \ref{sec.DARWMtune}, a wide-ranging simulation study in the \emph{Supplementary Material} provides further evidence for the appropriateness of these guidelines.

The \emph{Supplementary Material} also details tuning guidelines for the both the scaling and variance of the DAPsRWM, and tests these via an extensive simulation study on a discretely observed Lotka-Volterra model \cite{boys08}.

\section{Discussion}
\label{sec.Discussion}
We have analysed the delayed-acceptance
random walk Metropolis (DARWM) and delayed-acceptance
pseudo-marginal random walk Metropolis algorithm
(DAPsMRWM) in the limit as the dimension of the parameter space
tends to infinity. The theory leads to tuning guidelines which we have verified empirically across a wide variety of scenarios.

The theoretical work also supports the intuition that, provided the cheap
deterministic approximation is fast and reasonably accurate, the DAPsMRWM and DARWM algorithms
should be optimally efficient when $\mu$ is much larger than (and the overall acceptance rate is much
lower than) that of the equivalent
(pseudo-marginal) RWM algorithm.

In some DAPsMRWM scenarios, even the cheap approximation may only be evaluated approximately, \emph{e.g.}, via a particle filter. The expectation of this approximation would then be treated as we have treated $\pi_a$. If the noise in the logarithm of the cheap approximation is additive, then an alternative to Proposition \ref{prop.limi.accept.proba} can account for this via a term akin to $W_{\Delta}$. In the \emph{Supplementary Material} we describe this and investigate some consequences.

The surrogate transition method of \cite{liu2001monte} is a generalisation of delayed acceptance where, from the current position, $\bmx$, multiple sub-iterations of a  Metropolis-Hastings kernel targeting $\pi_a$, are made. The final position of this sub-chain, $\bmx^*$, is a proposal that is accepted  with a probability analogous to the Stage 2 acceptance probability in delayed-acceptance. For a complex target with a very cheap yet accurate approximation this offers the possibility of even greater efficiency, with a large number of sub-iterations leading to an approximation of an independence sampler proposing directly from a complex $\pi_a$. Our theory is based on a limiting diffusion obtained through Taylor expansion about the current point, $\bmx$, and so does not apply here, where $\| \bmx^*-\bmx \|$ can be large and the limiting process might not even be a diffusion; moreover, our theory also relies on independence between  the components of $\bmx^*-\bmx$, which does not hold after multiple sub-iterations. When $\pi_a$ is both cheap and accurate, intuition and our experimentation suggest that the optimal scaling for $\pi$ is also a sensible scaling for $\pi_a$, whilst the number of sub-iterations should be no more than is required for approximate convergence, and fewer if $\pi_a$ is relatively expensive to evaluate. In Example 2.1 an optimally scaled and iterated surrogate transition kernel was  $\approx 1.7$ times more efficient than an optimally scaled DARWM because a single jump proposal of the DARWM with its  larger optimal scaling  was  equivalent to approximately $30$ sub-iterations. Once $\pi_a$ is available, however, the surrogate transition method requires little extra coding effort, and, hence, could be worthwhile even for such relatively small gains in efficiency. Just as with PsMRWM, we found that the optimal scaling for the DAPsRWM was insensitive to the choice of the variance of the estimator, turning a two-dimensional optimisation into two one-dimensional optimisations. It is plausible that this insensitivity might hold for the surrogate transition method, offering one possible tuning simplification in the pseudo-marginal setting.

\section*{Acknowledgements}
AHT acknowledges support from the Singapore Ministry of Education Tier 2 (MOE2016-T2-2-135) and a
Young Investigator Award Grant (NUSYIA FY16 P16; R-155-000-180-133).

\bibliographystyle{plain}
\bibliography{../dapmrwm}

\appendix

\section*{Supplementary Material}
This document contains the following material. Section \ref{sec.example.targets} provides further details of the example targets used in Section \ref{sec.DARWMtune}. Section \ref{sec.explicit.alphas} provides explicit expressions for the theoretical acceptance probabilities derived in Section \ref{sec.limiting.acc.proba}. Section \ref{app.additional.contours} provides further plots derived from our theory, including enlargements of the plots required for tuning. Section \ref{sec.valid.both} provides numerical verification of Lemma \ref{lem.pivotal} in a DARWM (\ref{sec.valid.ODE}) and DAPsMRWM (\ref{sec.valid.lotka}) setting. Section \ref{sec.proof} provides proofs of important results, as well as of the diffusion limit. Section \ref{sec.technical} gives the proofs of several technical results, whilst Section \ref{app.pmstageOne} describes key results parallel to those in Section \ref{sec.asymp.analysis} but for the case when the error at Stage One is noisy and biased. The method typically recommends a range of tuning parameter values rather than one specific value; Section \ref{app.envelopeWidth} contains an investigation of how to choose from the values in the envelope. Section \ref{app.DARWMbetas} details a broad simulation study, further validating the tuning advice for the DARWM. Section \ref{sec.DAPsRWMtuneandsim} describes the tuning strategies for the DAPsRWM and details a simulation study based on noisy observations from the Lotka-Volterra model where the  Linear Noise Approximation is used as $\pi_a$.


\section{Example targets}
\label{sec.example.targets}
In this section we provide further details on the target distributions used in Section \ref{sec.DARWMtune}.\\

\noindent
{\bf The Markov modulated Poisson process:} consider a $k$-state, continuous-time Markov chain $Z_t$ started from state $1$, and a Poisson process $N_t$ whose rate $\lambda_t$ is a fixed function of $Z_t$. The doubly-stochastic process is parameterised by the rate matrix for the Markov chain, $Q$, and a vector of rates for the Poisson process, $\lambda$, where $\lambda_i,~(i=1,\dots,k)$ is the rate of $N_t$ when $Z_t=i$.

The event times of $N_t$ are observed over a time window
$[0,t_{\mathrm{end}}]$, but the behaviour of $Z_t$ is unknown, and we
wish to perform inference on $(Q,\lambda)$. Setting
$\Lambda=\mbox{diag}(\lambda)$, the likelihood for the number of
events $n$ and the event times $t_1,\dots,t_n$ is  \cite[e.g.][]{FearnheadSherlock2006}:
  \[
  L(Q,\lambda;t)
  =
  e' \exp[(Q-\Lambda)t_1]\Lambda \exp[(Q-\Lambda)(t_2-t_1)]\Lambda \dots
  \Lambda\exp[(Q-\Lambda)(t_{\mathrm{end}}-t_n)] 1,
  \]
  where $1$ is the $k$-vector of ones and $e'=(1,0,\dots,0)$.
  We simulated a dataset using a cyclic four-state Markov chain for a
  $200$-second time window with $Q$ parameters of:
  $Q_{12}=Q_{23}=Q_{41}=1.0,~Q_{34}=0.25$ and all other
  off-diagonal rates set to zero. The rate parameters were
  $\lambda_1=20.0, \lambda_2=5.0,~\lambda_3=1.0$ and $\lambda_4=10.0$. We
  then conducted inference on the natural logarithm of each parameter
  that was not systematically zero,
  placing independent $N(0,2^2)$ priors on each of these. \\

\noindent
    {\bf The ODE model:}
    We set $x_0=(1,\ldots,1)\in \RR^5$ and assume independent centred Gaussian priors with standard deviations of $\sigma_0 = 10$ on the upper-triangular part of $A \in \RR^{5,5}$.
\section{Explicit expressions for the acceptance probabilities}
\label{sec.explicit.alphas}
Define $G(a,b):=\Expect{1\wedge \exp(\Normal{a,b^2})}=\Phi(a / b) + \exp \left(a + b^2/2 \right) \, \Phi(-b - a/b)$ with $\Phi:\RR \to [0,1]$ the standard Gaussian cumulative
distribution function. 
Then
\begin{align}
\label{eq.acc.1.gaussb}
\alpha_1(\mu, \sigma;& \beta_1,\beta_2) 
= G\BK{-\frac{\mu^2}{2}(1-\beta_1), \, \mu^2 \, \BK{1+\beta^2_2 - 2 \beta_1}}.
\end{align}
Further, we may rewrite
\begin{align*}
Q^\infty_\Delta =-\frac{1}{2}\mu^2+\mu \frac{\beta_1}{\beta_2}\xi +\Normal{0,\mu^2-\mu^2\frac{\beta_1^2}{\beta_2^2}}
\qquad \textrm{and} \qquad
S^\infty_\Delta =\frac{\beta_1}{2}\mu^2-\mu\beta_2\xi,
\end{align*}
where $\xi\sim\Normal{0,1}$ is independent of any other source of variability. Thus, the quantity $\alpha_{12}(\mu,\sigma^2;\beta_1,\beta_2)$ can also be expressed as
\begin{align}
\label{eq.acc.2.gaussb}
\Expect{G\left(-\frac{\mu^2}{2}(1-\beta_1)
+
\mu\left(\frac{\beta_1}{\beta_2}-\beta_2\right)\xi,\mu^2-\mu^2\frac{\beta_1^2}{\beta_2^2}\right)
G\left(-\frac{\beta_1}{2}\mu^2-\sigma^2+\mu\beta_2\xi,2\sigma^2\right)}.
\end{align}

\section{Further plots derived from the theory}
\label{app.additional.contours}
Figure \ref{fig.effPMRWM.betas} shows contour plots of
$\alpha_{2|1}(\muhat_{\textrm{pm}},\sigmahat_{\textrm{pm}}^2)$, $\muhat_{\textrm{dapm}}/\muhat_{\textrm{pm}}$, $\sigmahat^2_{\textrm{dapm}}/\sigmahat^2_{\textrm{pm}}$ and $\eff_{\textrm{rel}}(\muhat_{\textrm{dapm}},\sigmahat^2_{\textrm{dapm}})$, as a
function of $\beta_1$ and $\beta_2$ for $\eta=0.01$. The contours all have a very similar shape, which suggests that $\alpha_{2|1}(\muhat_{\textrm{pm}},\sigmahat_{\textrm{pm}}^2)$, $\muhat_{\textrm{dapm}}/\muhat_{\textrm{pm}}$ together with $\eta$ should provide information about the other three quantities.

Figure \ref{fig.musigma2scatterpm.app} shows an enlarged scatter plot of $\lambdahat_{\textrm{da}}/\lambdahat_{\textrm{rwm}}$ as a function of $\alpha_{2|1}(\lambdahat_{\textrm{rwm}})/\alpha_{\textrm{rwm}}(\lambdahat_{\textrm{rwm}})$. It also shows   
 $\muhat_{\textrm{dapm}}/\muhat_{\textrm{pm}}$,  $\sigmahat^2_{\textrm{dapm}}/\sigmahat^2_{\textrm{pm}}$ and $\eff^{\textrm{rel}}_{\textrm{dapm}}(\muhat_{\textrm{dapm}},\sigmahat^2_{\textrm{dapm}})$, all vs $\alpha_{2|1}(\muhat_{\textrm{pm}},\sigmahat^2_{\textrm{pm}})$ , with all plots partitioned by $\eta$.
\begin{figure}
\begin{center}
\subfigure{
  \includegraphics[width=0.4 \textwidth,angle=0]{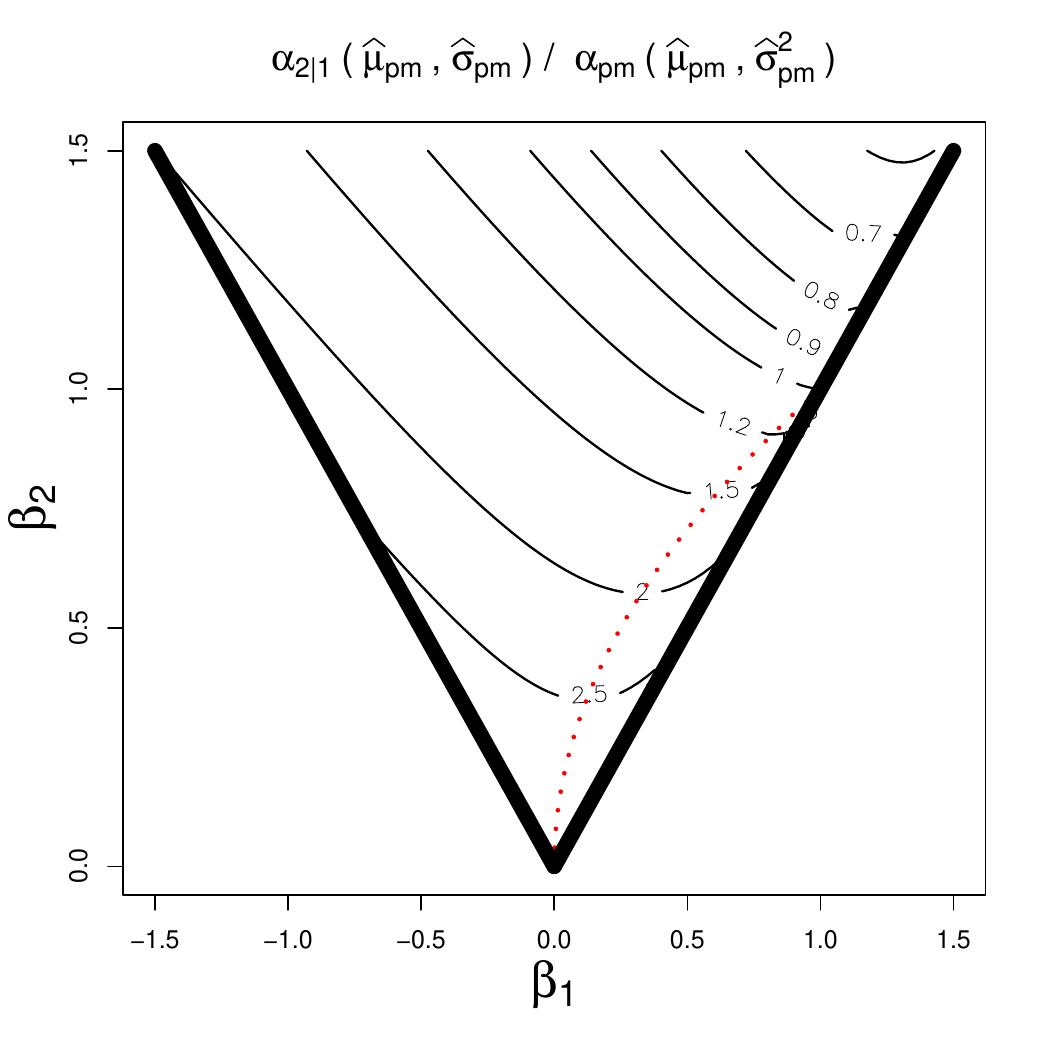}
}
\subfigure{
  \includegraphics[width=0.4 \textwidth,angle=0]{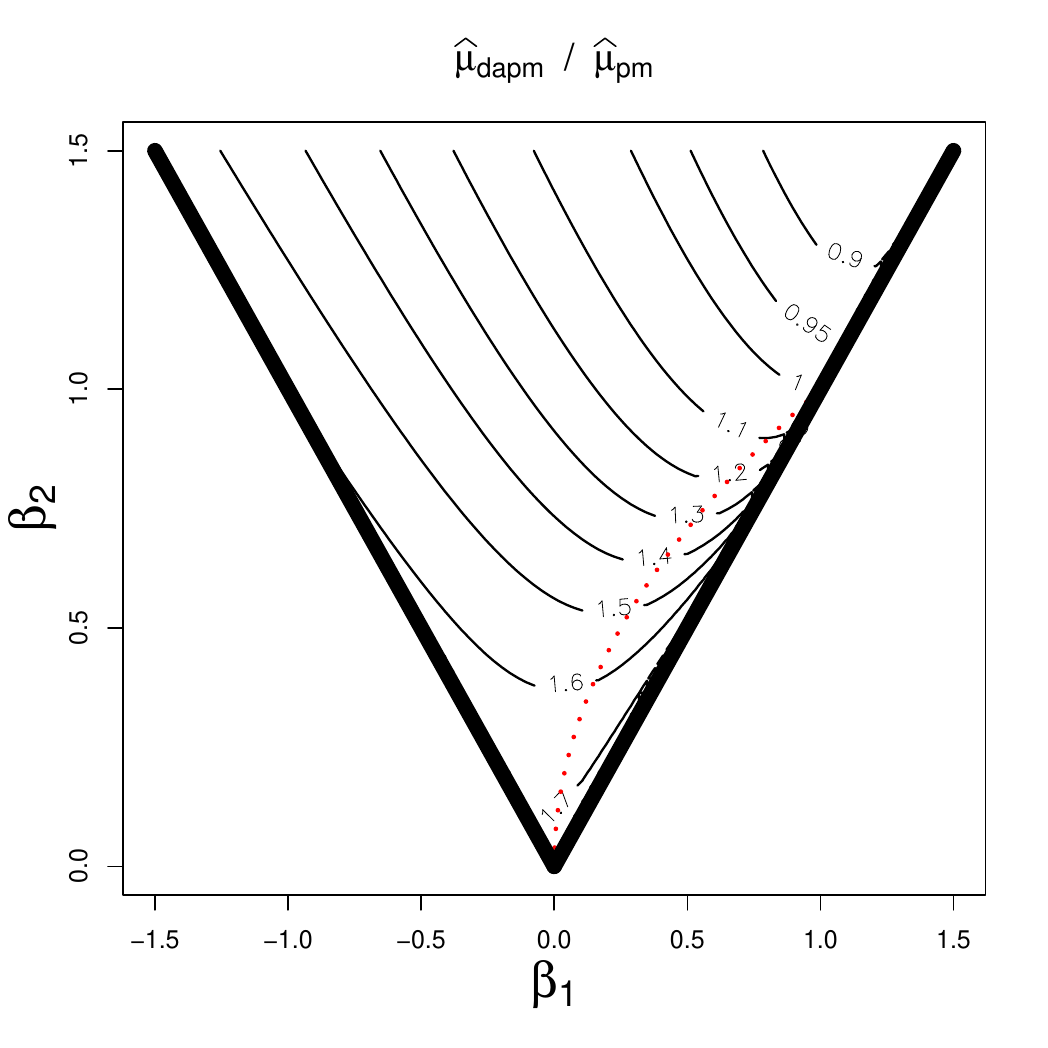}
}\\
\subfigure{
  \includegraphics[width=0.4 \textwidth,angle=0]{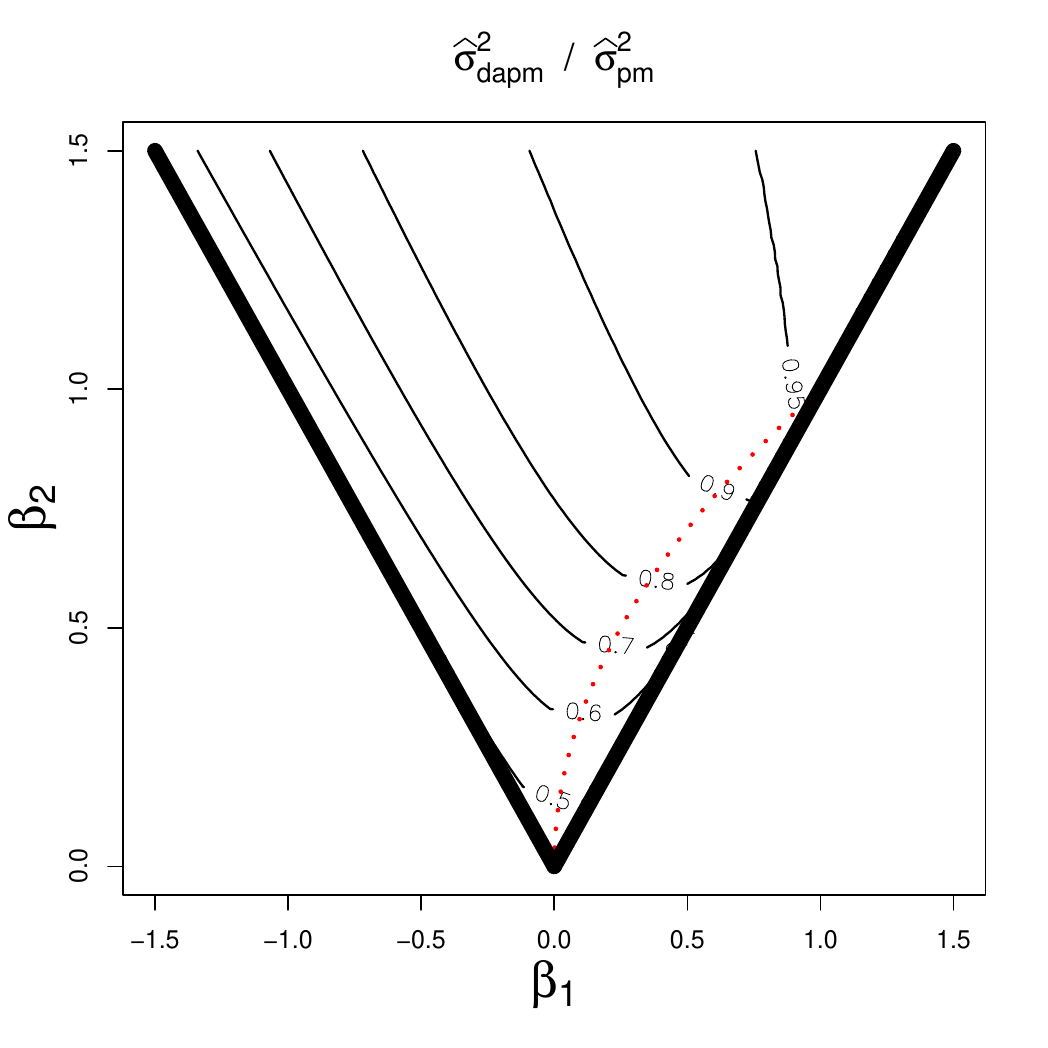}
}
\subfigure{
  \includegraphics[width=0.4 \textwidth,angle=0]{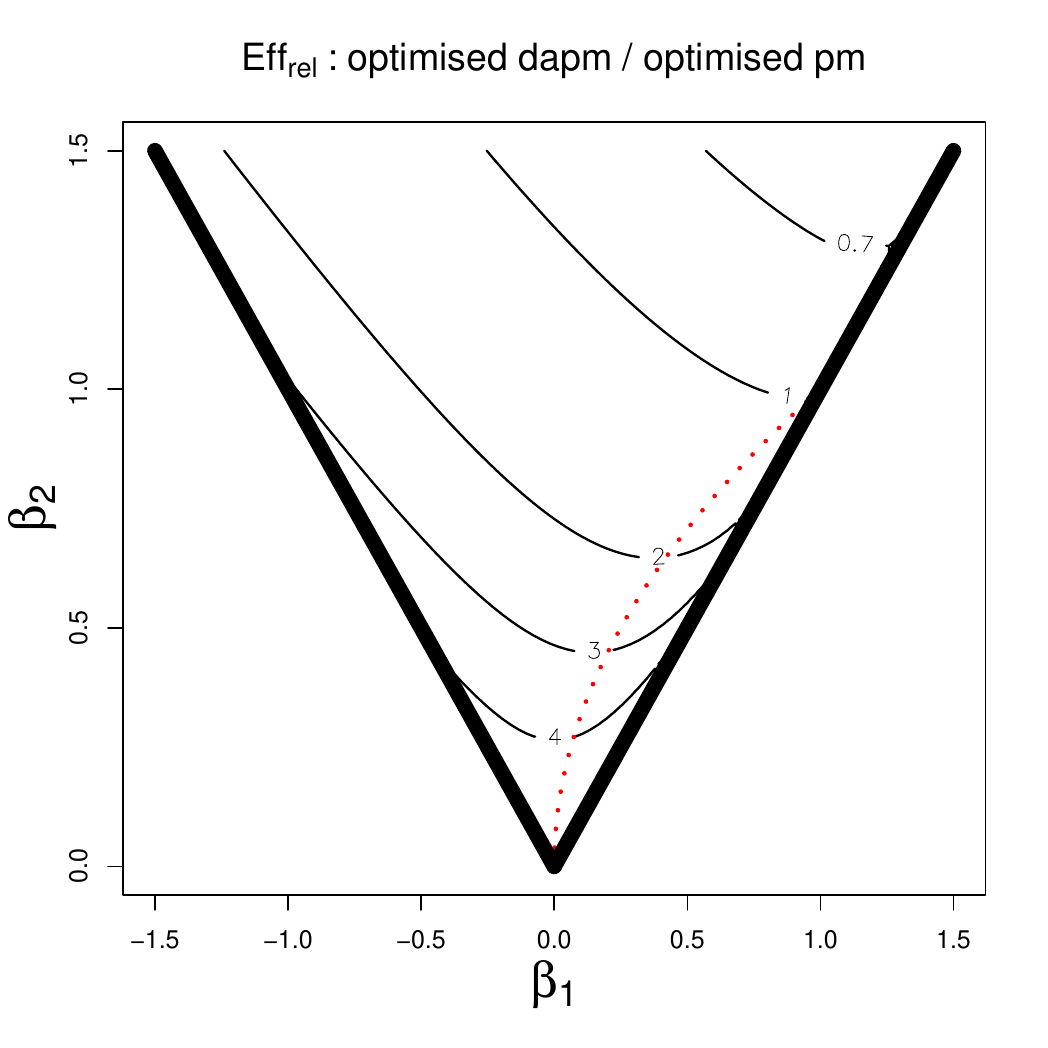}
}
\caption{
Contour plots of
$\alpha_{2|1}(\muhat_{\textrm{pm}},\sigmahat^2_{\textrm{pm}};\beta_1,\beta_2)$ (left),  $\muhat_{\textrm{dapm}}/\muhat_{\textrm{pm}}$, $\sigmahat^2_{\textrm{dapm}}/\sigmahat^2_{\textrm{pm}}$ and $\eff_{\textrm{rel}}(\muhat_{\textrm{dapm}},\sigmahat^2_{\textrm{dapm}})$ (right), as a
function of $\beta_1$ and $\beta_2$ for $\eta=0.01$. The red, dotted line satisfies $\beta_1=\beta_2^2$.
\label{fig.effPMRWM.betas}
}
\end{center}
\end{figure}
\begin{figure}
\begin{center}
\subfigure{
  \includegraphics[width=0.4 \textwidth,angle=0]{DARWM_muvsalpha_xtra.pdf}
  }
\subfigure{
  \includegraphics[width=0.4 \textwidth,angle=0]{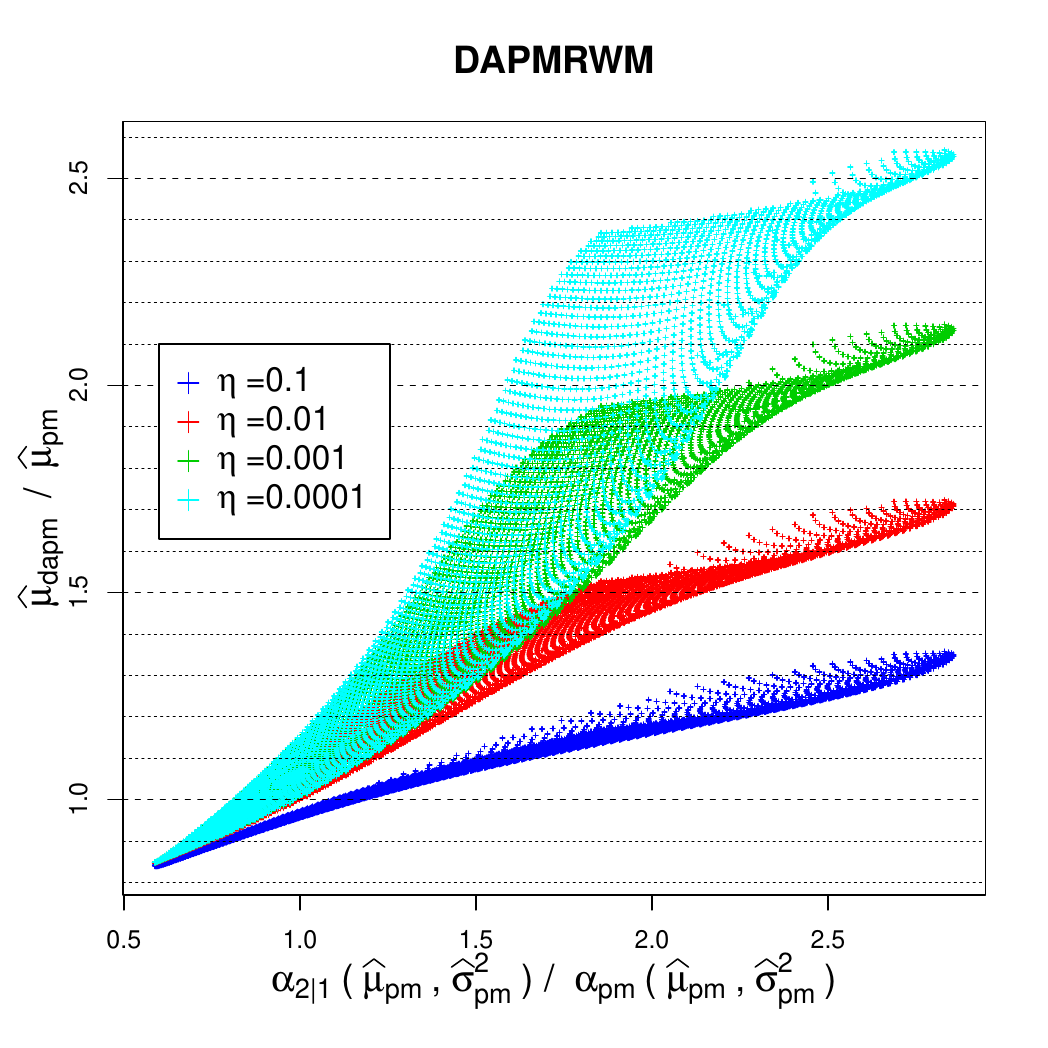}
}
\subfigure{
  \includegraphics[width=0.4 \textwidth,angle=0]{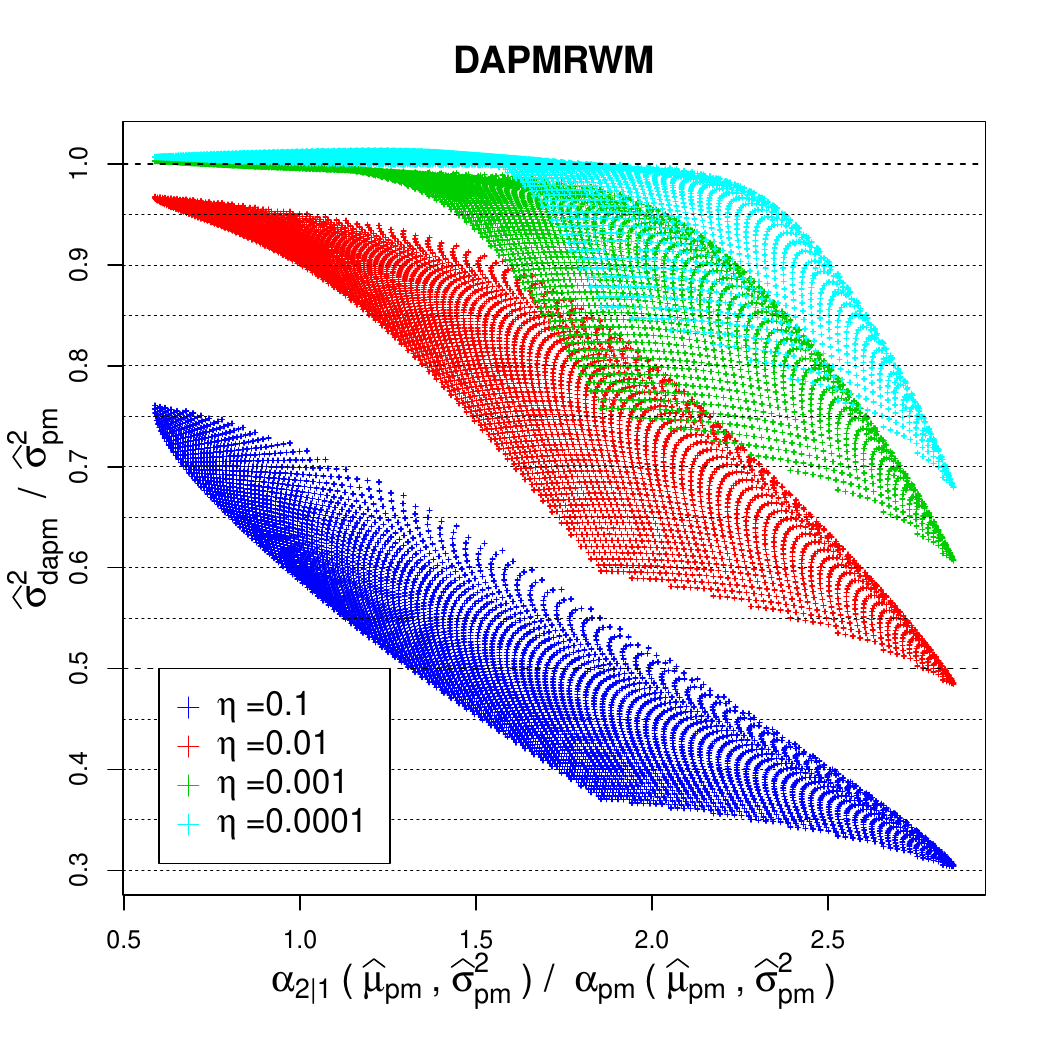}
}
\subfigure{
  \includegraphics[width=0.4 \textwidth,angle=0]{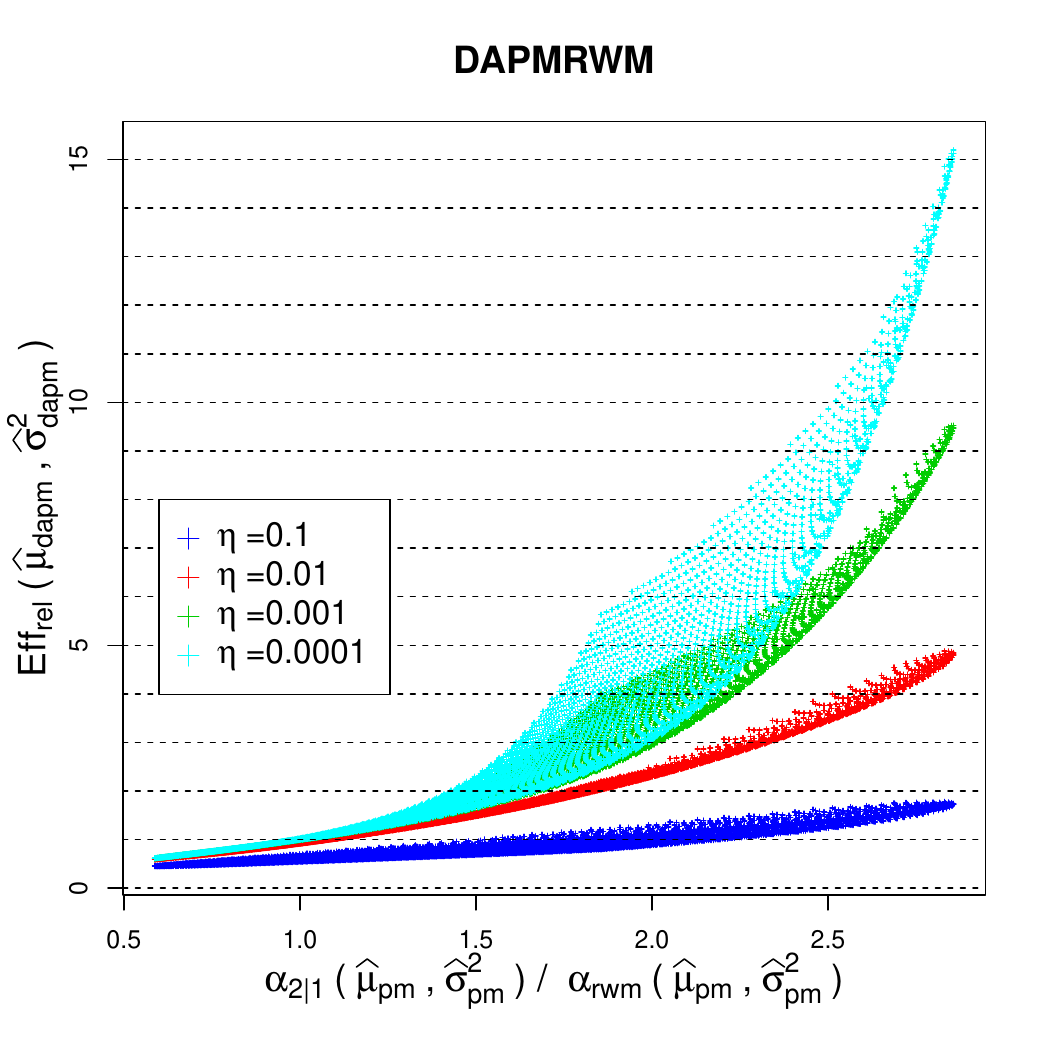}
}
\caption{
Scatter plots of
 $\lambdahat_{\textrm{da}}/\lambdahat_{\textrm{rwm}}$ as a function of $\alpha_{2|1}(\lambdahat_{\textrm{rwm}})/\alpha_{\textrm{rwm}}(\lambdahat_{\textrm{rwm}})$ for different values of $\eta$ (top left), then $\muhat_{\textrm{dapm}}/\muhat_{\textrm{pm}}$ (top right),  $\sigmahat^2_{\textrm{dapm}}/\sigmahat^2_{\textrm{pm}}$ (bottom left) and $\eff^{\textrm{rel}}_{\textrm{dapm}}(\muhat_{\textrm{dapm}},\sigmahat^2_{\textrm{dapm}})$ (bottom right), vs $\alpha_{2|1}(\muhat_{\textrm{pm}},\sigmahat^2_{\textrm{pm}})$ (right), partitioned by $\eta$.
\label{fig.musigma2scatterpm.app} 
}
\end{center}
\end{figure}

Figure \ref{fig.eff.mu.sigma2}, which is typical of many other 
such figures that we produced, shows contour plots of $\eff^{\textrm{rel}}_{\textrm{dapm}}$ as a function
of $\mu$ and $\sigma^2$ for specific combinations of
$\beta_2 \ge 0$, $\Abs{\beta_1}<\beta_2$ and $\eta>0$. 
Each plot shows a single mode and also shows that for a particular variance, the optimal scaling 
$\muhat(\sigma)$ is insensitive to the value of $\sigma$, except 
when $\sigma\lesssim 1$, at which point the optimal scaling increases. 
Provided the noise variance is not made too small, therefore, 
$\mu$ and $\sigma$ may also be tuned independently for the DAPsMRWM.

\begin{figure}
\begin{center}
\subfigure{
  \includegraphics[width=0.3\textwidth,angle=0]{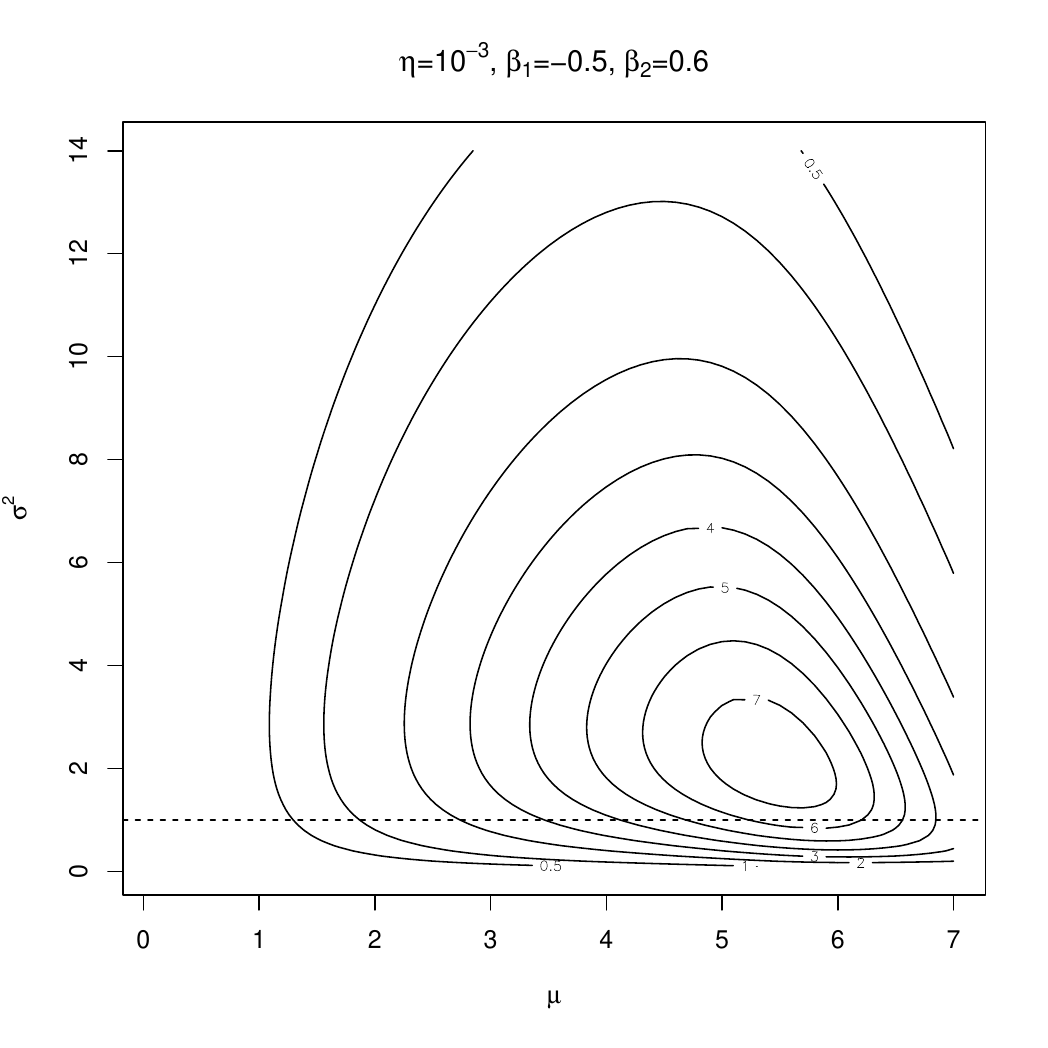}
}
\subfigure{
  \includegraphics[width=0.3\textwidth,angle=0]{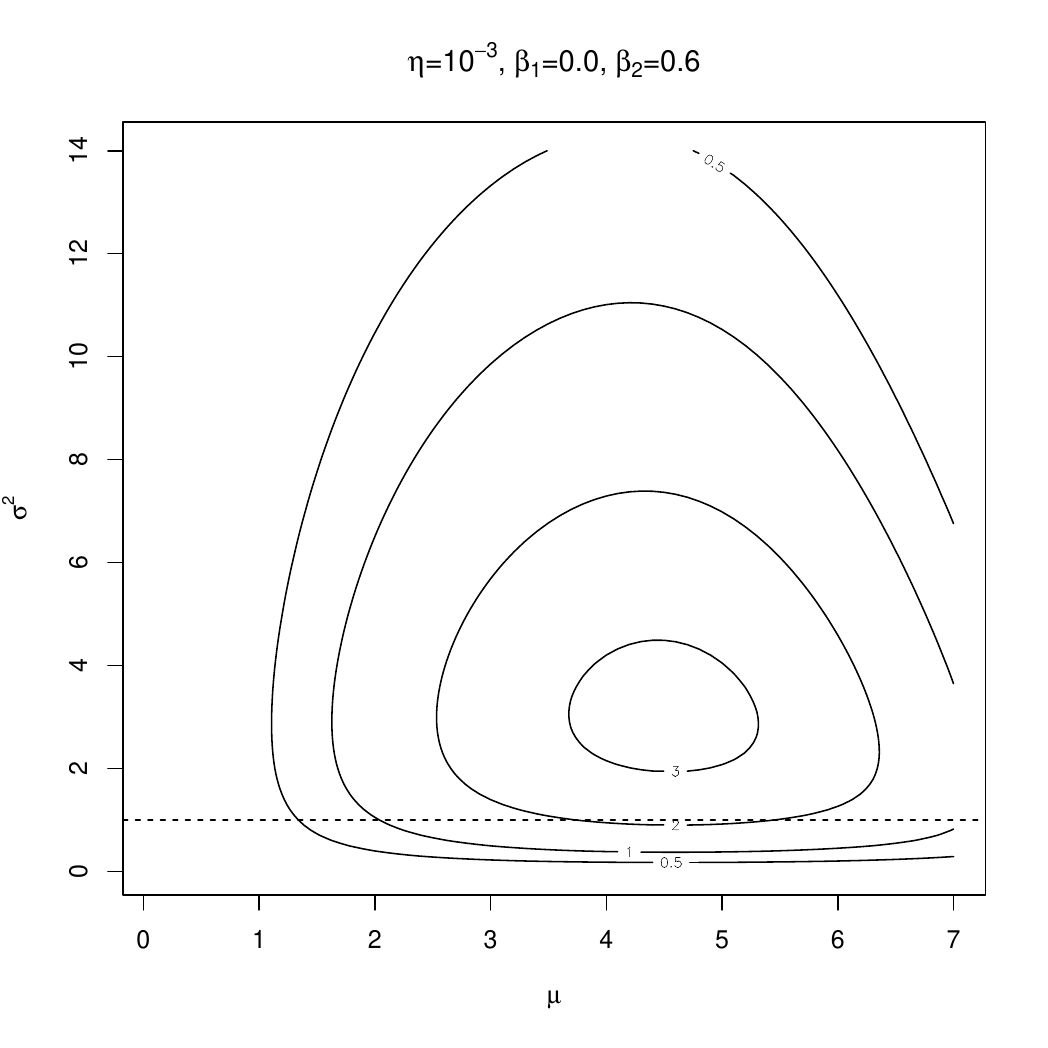}
}
\subfigure{
 \includegraphics[width=0.3\textwidth,angle=0]{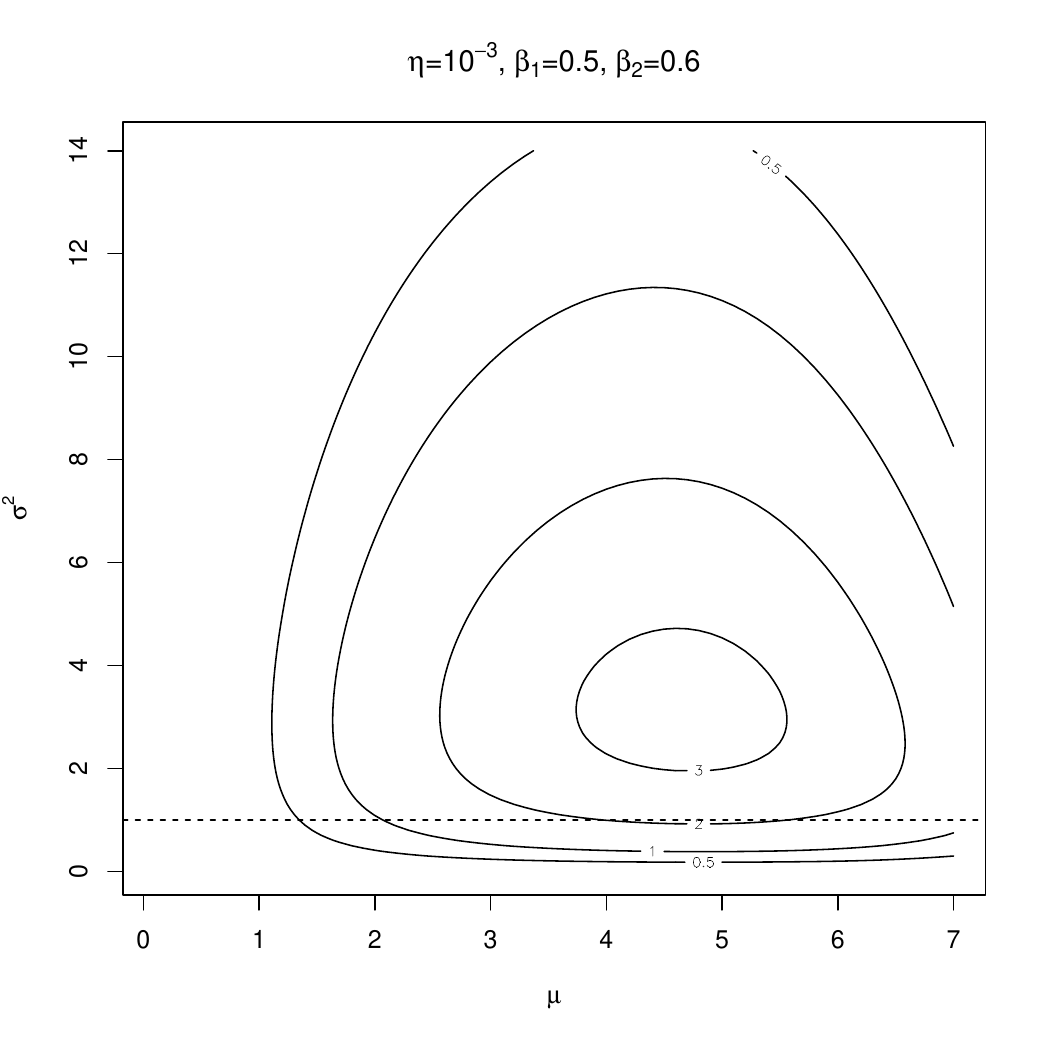}
 }
 
\caption{
Contour plots of the asymptotic efficiency relative to the optimal
efficiency of the equivalent pseudo-marginal RWM algorithm,
$\eff^{\textrm{rel}}_{\textrm{dapm}}$, as a
function of the scaling, $\mu$, and the variance of the noise in the
log-target, $\sigma^2$, for different choices of $\beta_1,~\beta_2$
at $\eta=10^{-3}$. For comparability, all contours are at
$0.5,1,2,3,4,5,6,7,8,10,12,15$. The horizontal dashed line denotes $\sigma^2=1$.
\label{fig.eff.mu.sigma2}
}
\end{center}
\end{figure}

\section{Validation of Lemma \ref{lem.pivotal}}
\label{sec.valid.both}
The product form Assumptions \eqref{eq.target.distribution} and
\eqref{eq.deterministic.log.error} from which we derive the bivariate Gaussian distribution in Lemma
\ref{lem.pivotal} are chosen for convenience. We expect the same
conclusions to hold, at least approximately, in much broader settings; for example, we believe that extensions of Lemma \ref{lem.pivotal} to non i.i.d target distributions similar to those discussed in
\cite{breyer2004optimal,BedardA:2007,BedardRosenthal:2008,Sherlock/Roberts:2009,BeskosRobertsStuart:2009,PST12} are possible, at the cost of much less transparent proofs. Here we verify the conclusions of Lemma \ref{lem.pivotal} numerically in two examples from the main paper.

\subsection{ODE model}
\label{sec.valid.ODE}
First we investigate the ODE Example \ref{example.ode} described in Equation \eqref{eq.ode.model}. For values of the jump scaling parameter $\lambda$ associated with acceptance rates in the range $10 \% \leq \alpha_{\textrm{rwm}}(\lambda) \leq 95\%$, we display the quantities $\loc(S_{\Delta}) / \lambda^2$, $\scale(S_{\Delta}) / \lambda$, $\loc(S_{\Delta}+Q_{\Delta}) / \lambda^2$ and $\scale(S_{\Delta}+Q_{\Delta}) / \lambda$ where $S_{\Delta} \equiv S(\bmx_0 + \lambda \, \xi) - S(\bmx_0)$ and $S(\bmx) \equiv \log[\pi(\bmx) / \pi_a(\bmx)]$ and $Q_{\Delta} \equiv \log[\pi(\bmx_0 + \lambda \, \xi) / \pi(\bmx_0)]$. Here, $\bmx_0 \in \RR^{10}$ is chosen in the bulk of the distribution, the perturbation $\xi$ is a centred Gaussian random variable with covariance approximately matching that of $\pi$. For a random variable $V$, $\loc(V)$ denotes its median, and $\scale(V) \equiv q_{75} - q_{25}$ is the inter-quartile range. We report the location and scale parameters instead of the mean and standard-deviation for increased robustness. Lemma \ref{lem.pivotal} predicts that the reported quantities are insensitive to the value of the scale parameter $\lambda$, and hence of the acceptance rate $\alpha_{\textrm{rwm}}(\lambda)$. Figure \ref{fig.pivotal.lemma} (right) shows that this property approximately holds true, although departure from the theory is noticeable for low acceptance rate, i.e. large Gaussian perturbations. Figure \ref{fig.pivotal.lemma} (Left) shows that the pair $(Q_\Delta, S_{\Delta})$ is approximately jointly Gaussian, as predicted by Lemma \ref{lem.pivotal}.

\begin{figure}[h]
\begin{center}
  \includegraphics[height=0.3\textheight, width=0.45\textwidth]{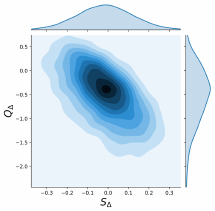}
  \includegraphics[height=0.3\textheight, width=0.45\textwidth]{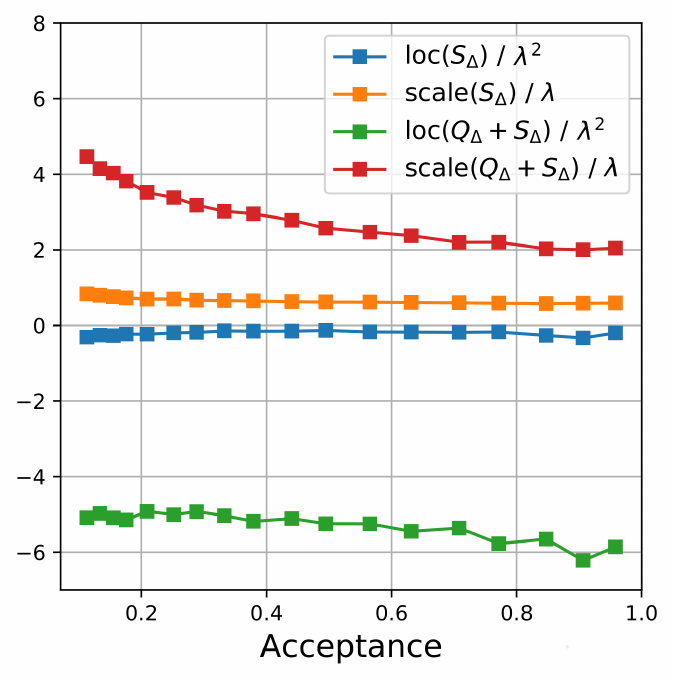}
\caption{Empirical verification of Lemma \ref{lem.pivotal} on the ODE Example \ref{example.ode} defined in Equation \eqref{eq.ode.model}.
\label{fig.pivotal.lemma}
}
\end{center}
\end{figure}

\subsection{Lotka-Volterra model}

\label{sec.valid.lotka}

\begin{figure}[h]
\begin{center}
  \includegraphics[width = 1. \textwidth]{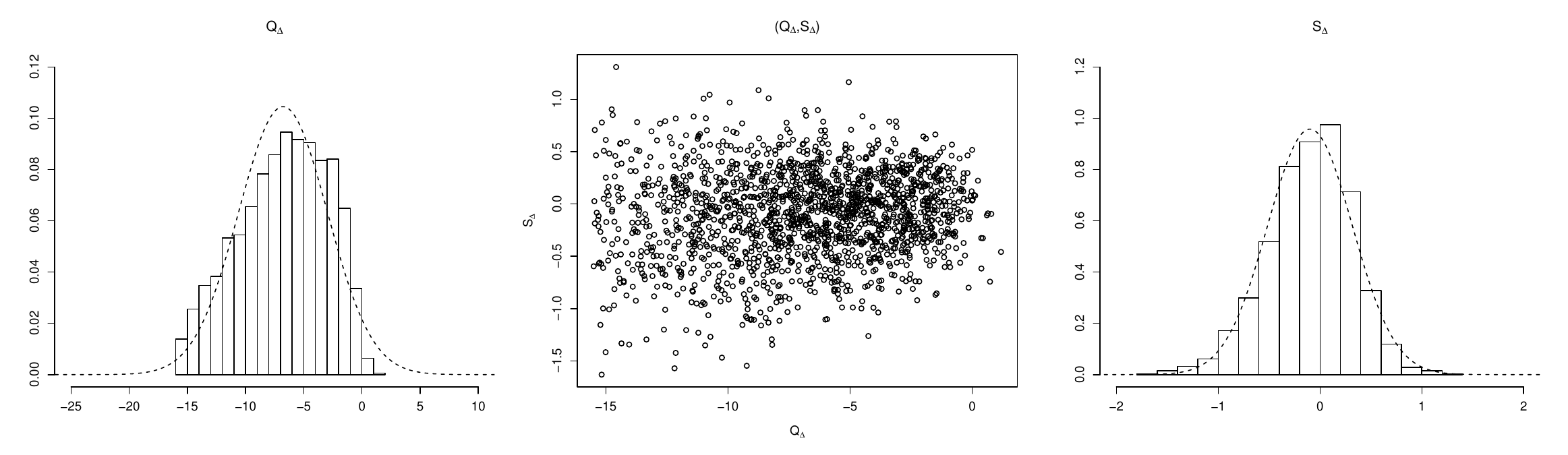}
\caption{
Empirical distribution, for the Lotka-Volterra model with LNA approximation, of $q^{(d)}_{\Delta}(\bmx^{(d)}, \bmX^{(d),*})$ and 
$s^{(d)}_{\Delta}(\bmx^{(d)}, \bmX^{(d),*})$ evaluated from a fixed
current point in the bulk of the target distribution. The dashed lines in the
left and right panels show the densities of Gaussian fits
to the empirical marginal distributions of $q^{(d)}_{\Delta}(\bmx^{(d)}, \bmX^{(d),*})$ and 
$s^{(d)}_{\Delta}(\bmx^{(d)}, \bmX^{(d),*})$ respectively.
\label{fig.QS.distributions.LV}
}
\end{center}
\end{figure}

To validate Lemma \ref{lem.pivotal} in the case of the five-dimensional Lotka-Volterra example we picked a point from the bulk of the posterior and used a very large number of particles to obtain an accurate estimate of the posterior, $\pi$, then ran the LNA to obtain $\pi_a$. From this point we repeatedly proposed jumps, and for each proposed point we also  obtained $\pi_a$ and an accurate estimate of $\pi$. 

Figure \ref{fig.QS.distributions.LV} shows histograms of $Q_{\Delta}$ and $S_{\Delta}$ and a scatter plot of the two quantities. Because of the low dimension ($d=5$), $Q_{\Delta}$ is only approximately Gaussian, although $\mathbb{E}[Q_{\Delta}]\approx-6.7 \approx -0.5\times 13.7\approx -0.5 \times  \mbox{var}[Q_{\Delta}]$, which fits with the theory. The Gaussian approximation to $S_\Delta$ is much more accurate, and the further three quantities of $\mathbb{E}[S_{\Delta}]\approx-0.096$, $\mbox{Var}[S_\Delta]\approx 0.17$ and $\mbox{Cor}[Q_{\Delta},S_{\Delta}]\approx 0.11$ are consistent with the two values $\beta_1\approx -0.013$ and $\beta_2\approx 0.11$.

%
%
\section{Proofs}
\label{sec.proof}
It will be helpful to introduce i.i.d sequences $\{X_i\}_{i \geq 1}$
and $\{\Aux_i\}_{i \geq 1}$ respectively marginally distributed as
$\pi$ and $\pi_{\Aux}$, and corresponding realisations of them, $\{x_i\}_{i \geq 1}$ and $\{\aux_i\}_{i \geq 1}$. Similarly, we consider an i.i.d sequence $\{Z_{i,k}\}_{i,k \geq 0}$ of standard Gaussian $\Normal{0,1}$ random variables, $\{U_k\}_{k \geq 0}$ an i.i.d sequence of random variables uniformly distributed on $[0,1]$, $W$ a random variable distributed as $\pi_W$ and $\{W^*_{k}\}_{k \geq 0}$ an i.i.d sequence distributed as $\pi_{W^*}$. For any dimension $d \geq 1$ we set $\bmX^{(d)}_0 = (X_1, \ldots, X_d) \in \RR^d$ and  $W^{(d)}_0 = W$ and $X^{(d),*}_{k,j} = X^{(d)}_{k,j} + (\mu / I) \, d^{-1/2} \, Z_{k,j}$; we recursively define 
\begin{align*}
(\bmX^{(d)}_{k+1}, W^{(d)}_{k+1})
=
\left\{
\begin{array}{ll}
(\bmX^{(d),*}_{k}, W^*_k) & \mathrm{if} \quad U_k < \acc^{(d)}\left( \bmX^{(d)}_{k}, W^{(d)}_k ;\bmX^{(d),*}_{k}, W^*_k \right)\\
(\bmX^{(d)}_{k}, W^{(d)}_{k}) & \mathrm{otherwise},
\end{array}
\right.
\end{align*}
for a proposal $\bmX^{(d),*}_d = (X^{(d),*}_{k,1}, \ldots, X^{(d),*}_{k,d})$. Indeed, the process $(\bmX^{(d)}_k, W^{(d)}_k)$ is a DAPsMRWM Markov chain started at stationarity and targeting $\pi^{(d)} \otimes \pi_{W}$.
We denote by $\FF_k$ the $\sigma$-algebra generated by the family of random variables $\big\{ \bmX^{(d)}_{t},W^{(d)}_{t} \mid t \leq k \big\}$ and use the notation $\EE_k[ \, \cdot \, ]$ for designating the conditional expectation $\EE[ \, \cdot \mid \FF_k]$. Similarly, we use the notation $\EE_{\bmx,w}[ \, \cdot \, ]$ instead of $\EE[ \, \cdot \mid (\bmX^{(d)}_0, W^{(d)}_0)=(\bmx,w)]$. Finally, we set
\begin{align}
\label{eqn.QSWdefs}
\begin{array}{lll}
q^{(d)}_{\Delta} = q^{(d)}_{\Delta}(\bmx^{(d)}, \bmx^{(d),*}),&
s^{(d)}_{\Delta} =  s^{(d)}_{\Delta}(\bmx^{(d)}, \bmx^{(d),*}),&
w^{(d)}_{\Delta} =  w^{(d),*} - w^{(d)},\\
\mathsf{Q}^{(d)}_{\Delta} = q^{(d)}_{\Delta}(\bmx^{(d)}, \bmX^{(d),*}),&
\mathsf{S}^{(d)}_{\Delta} = s{(d)}_{\Delta}(\bmx^{(d)}, \bmX^{(d),*}),&
\mathsf{W}^{(d)}_{\Delta} =  W^{(d),*} - w^{(d)},\\
Q^{(d)}_{\Delta} = q{(d)}_{\Delta}(\bmX^{(d)}, \bmX^{(d),*}),&
S^{(d)}_{\Delta} =  s{(d)}_{\Delta}(\bmX^{(d)}, \bmX^{(d),*}),&
W^{(d)}_{\Delta} =  W^{(d),*} - W^{(d)}.\\
\end{array}
\end{align}
and use the shorthand notation $\ell(x) \equiv \log \pi(x)$.

\subsection{Proof of Lemma \ref{lem.pivotal}}
\label{sec.proof.lemma.pivotal}
The Law of Large Numbers and the separability of $L^1(\pi \otimes \pi_{\Aux})$ readily yield that for almost every realisations $\{x_i\}_{i \geq 1}$ and $\{\aux_i\}_{i \geq 1}$, the following holds,
\begin{align} \label{eq.ae.cv.dist}
\lim_{n \to \infty} \; n^{-1} \, \sum_{i=1}^n \phi(x_i, \aux_i) = \int \phi(x,\aux) \, \left(\pi \otimes \pi_{\Aux}\right)(dx, d\aux)
\qquad \textrm{for all} \quad 
\phi \in L^1(\pi \otimes \pi_{\Aux}).
\end{align}
We can thus safely assume in the remainder of this section that Equation
\eqref{eq.ae.cv.dist} holds for the realisation $\{\aux_i\}_{i \geq 1}$
of the auxiliary random variables used to describe the deterministic approximation \eqref{eq.deterministic.log.error} .
By the Cramer-Wold device, for proving Lemma \ref{lem.pivotal} it suffices to establish that for any coefficient $c_Q,c_S \in \RR$ the sequence 
$c_Q \,  Q_{\Delta}^{(d)}(\bmx^{(d)}) + c_S \,  S_{\Delta}^{(d)}(\bmx^{(d)})$ converges in law towards $c_Q \,  Q^{\infty}_{\Delta} + c_S \,  S^{\infty}_{\Delta}$; the boundedness assumption on the derivatives of the functions $x \mapsto \loglik(x)$ and $x \mapsto \SS(x,u)$ and a second order Taylor expansion show that this is equivalent to proving that
the sum
\begin{align}
\frac{\mu }{\sqrt{I^2 \, d}} 
\sum_{i=1}^d \Big\{ c_Q \, \loglik'(x_i) + c_S \, \partial_x \SS(x_i,\aux_i) \Big\} \, Z_i
+
\frac12 \, \frac{\mu^2}{I^2 \, d}
\sum_{i=1}^d \Big\{ c_Q \, \loglik{''}(x_i) + c_S \, \partial_{xx} \SS(x_i,\aux_i) \Big\}
\end{align}
converges in law towards $c_Q \,  Q^{\infty}_{\Delta} + c_S \,  S^{\infty}_{\Delta}$.
Definition \eqref{eq.betas} of the coefficient $\beta_1$ and $\beta_2$
yields that for almost every realisation $\{x_i\}_{i \geq 1}$ and $\{\aux_i\}_{i \geq 1}$ we have
\begin{align}
\label{eq.as.cv.betas}
\frac{1}{I^2 \, d} \sum_{i=1}^d
\Big(
\loglik{'}(x_i)^2, \, 
\loglik{''}(x_i), \, 
\partial_{x} \SS(x_i,\aux_i)^2, \,
\partial_{xx} \SS(x_i,\aux_i), \,
\loglik{'}(x_i) \, \partial_{x} \SS(x_i,\aux_i)
\Big)
\; \to \;
\Big(
1, \,
-1, \, 
\beta^2_2, \,
\beta_1, \,
-\beta_1
\Big),
\end{align}
from which the conclusion directly follows since
$c_Q \,  Q^{\infty}_{\Delta} + c_S \,  S^{\infty}_{\Delta}$ has a Gaussian distribution with mean $\mu^2 \, (c_S \beta_1 - c_Q)/2$ and variance $\mu^2 \, (c_Q^2 + c_S^2 \, \beta^2_2 - 2 \, c_Q \, c_S \, \beta_1)$. 



\subsection{Proof of Proposition \ref{prop.limiting.esjd}}
The quantity $\esjd^{(d)}$ can also be expressed as
\begin{align}
\esjd^{(d)}
&= \left( \frac{\mu^2 }{ I^2 \, d} \right) \, \sum_{j=1}^d \EE\left[  \left( Z_j^{(d)} \right)^2 \times F\left( Q^{(d)}_{\Delta}  + S^{(d)}_{\Delta}\right) \times F\left( W^{(d)}_{\Delta}  - S^{(d)}_{\Delta}\right) \right]\\
&= \frac{\mu^2 }{ I^2 }  \, \EE\left[ \left( Z_1^{(d)} \right)^2 \times F\left( Q^{(d)}_{\Delta}  + S^{(d)}_{\Delta}\right) \times F\left( W^{(d)}_{\Delta}  - S^{(d)}_{\Delta}\right) \right]
\end{align}
for $Q^{(d)}_{\Delta}$, $S^{(d)}_{\Delta}$, $W^{(d)}_{\Delta}$ defined in \eqref{eqn.QSWdefs}; the second equality follows from the exchangeability, at stationarity, of the $d$ coordinates of the Markov chain. One can decompose $Q^{(d)}_{\Delta}$ and $S^{(d)}_{\Delta}$ as a sum of a term that is independent of $Z_1^{(d)}$ and a negligible term; we have  $Q^{(d)}_{\Delta}=Q^{(d),\perp}_{\Delta}+\log \big[ \pi(\bmX^{(d),*}_1) / \pi(\bmX^{(d)}_1) \big]$
and
$S^{(d)}_{\Delta}=S^{(d),\perp}_{\Delta}+\SS(\bmX^{(d),*}_1, \aux_1) - \SS(\bmX^{(d)}_1, \aux_1)$ with
\begin{align}
Q^{(d), \perp}_{\Delta} 
=
\sum_{j=2}^d \log \big[ \pi(\bmX^{(d),*}_j) / \pi(\bmX^{(d)}_j) \big]
\qquad \textrm{and} \qquad
S^{(d)}_{\Delta,\perp}
=
\sum_{j=2}^d \SS(\bmX^{(d),*}_j, \aux_j) - \SS(\bmX^{(d)}_j, \aux_j).
\end{align}
Note that $Q^{(d), \perp}_{\Delta}$ and $S^{(d), \perp}_{\Delta}$ are
independent of $Z^{(d)}_1$. Under Assumption \ref{ass.regularity}, the
moments of order two of the differences $Q^{(d)}_{\Delta}-Q^{(d),
  \perp}_{\Delta}$ and $S^{(d)}_{\Delta}-S^{(d), \perp}_{\Delta}$ are
finite and converges to zero as $d \to \infty$. The Cauchy-Schwarz
inequality and the fact that $F$ is bounded and Lipschitz yield that $\esjd^{(d)} / (\mu / I)^2$ can also be expressed as
\begin{align}
\EE\Big[ &\left( Z_1^{(d)} \right)^2 \times F\left( Q^{(d),\perp}_{\Delta}  + S^{(d), \perp}_{\Delta}\right) \times F\left( W^{(d)}_{\Delta}  - S^{(d), \perp}_{\Delta}\right) \Big] \\
&\qquad + 
\Expect{ \left( Z_1^{(d)} \right)^2 \times F\left( Q^{(d),\perp}_{\Delta}  + S^{(d), \perp}_{\Delta}\right) \times \left\{ F\left( W^{(d)}_{\Delta}  - S^{(d)}_{\Delta}\right) - F\left( W^{(d)}_{\Delta}  - S^{(d), \perp}_{\Delta}\right) \right\}} \\
&\qquad + 
\Expect{ \left( Z_1^{(d)} \right)^2 \times \left\{ F\left( Q^{(d)}_{\Delta}  + S^{(d)}_{\Delta}\right) - F\left( Q^{(d),\perp}_{\Delta}  + S^{(d), \perp}_{\Delta}\right) \right\} \times F\left( W^{(d)}_{\Delta}  - S^{(d)}_{\Delta}\right) } \\
&=
\Expect{ F\left( Q^{(d),\perp}_{\Delta}  + S^{(d), \perp}_{\Delta}\right) \times F\left( W^{(d)}_{\Delta}  - S^{(d), \perp}_{\Delta}\right) } + o(1)\\
&=
\Expect{ F\left( Q^{\infty}_{\Delta}  + S^{\infty}_{\Delta}\right) \times F\left( W_{\Delta}  - S^{\infty}_{\Delta}\right) } + o(1)
= \alpha_{12} + o(1),
\end{align}
as required. We have used the fact that for almost every realisation of the auxiliary random variable $\{\Aux_j\}_{j \geq 1}$ the sequence $\left(Q^{(d),\perp}_{\Delta},S^{(d),\perp}_{\Delta} \right)$ converges in distribution to $\left(Q^{\infty}_{\Delta}, S^{\infty}_{\Delta} \right)$, which readily follows from Lemma \ref{lem.pivotal}.

%
%
\subsection{Proof of Theorem \ref{thm.diff.lim}}
\label{sec.proof.diff.lim}
The proof is a generalisation of the generator approach of \cite{Roberts/Gelman/Gilks:1997,BedardA:2007} coupled with an homogenization argument.
We introduce the subsampled processes 
$\wtilde{\bmX}^{(d)}$ and $\wtilde{W}^{(d)}$ defined by
\begin{align}
\wtilde{\bmX}^{(d)}_k
= \bmX^{(d)}_{k \times T^{(d)}}
\qquad \textrm{and} \qquad
\wtilde{W}^{(d)}_k
= W^{(d)}_{k \times T^{(d)}}
\end{align}
for an intermediary time scale defined as $T^{(d)}=\floor{ d^{\gamma} }$ where $\gamma$ is an arbitrary exponent such that $\gamma \in (0, 1/4)$.
One step of the process $\wtilde{\bmX}^{(d)}$ (resp. $\wtilde{W}^{(d)}$) corresponds to $T^{(d)}$ steps of the process $\bmX^{(d)}$ (resp. $W^{(d)}$). 
We then define an accelerated version $\wtilde{V}^{(d)}$ of the subsampled process $\wtilde{X}^{(d)}$. In order to prove a diffusion limit 
for the process $X^{(d)}$, one needs to accelerate time by a factor of $d$; consequently, in order to prove a diffusion limit for the process 
$\wtilde{X}^{(d)}$, one needs to accelerate time by a factor $d / T^{(d)}$ and thus define $\wtilde{V}^{(d)}$ by
\begin{equation*}
\wtilde{V}^{(d)}(t) := \wtilde{X}^{(d)}_{\floor{td/T^{(d)}},1}.
\end{equation*}
The proof then consists of showing that the sequence $\wtilde{V}^{(d)}$ converges weakly in the Skorohod topology towards the 
limiting diffusion \eqref{e.limiting.diffusion} and verifying that $\| \wtilde{V}^{(d)} - V^{(d)}\|_{\infty, [0,T]}$ converges to zero in probability; 
this is enough to prove that the sequence  $V^{(d)}$ converges weakly in the Skorohod topology towards the 
limiting diffusion \eqref{e.limiting.diffusion}.
We denote by $\LL$ the generator 
of the limiting diffusion $\eqref{e.limiting.diffusion}$. Similarly,
we define $\LL^{(d)}$ and $\wtilde{\LL}^{(d)}$ the approximate generators 
of the first coordinate processes $X^{(d)}_1$ and $\wtilde{X}^{(d)}_1$; for
any smooth and compactly supported test function $\phi:\RR \to \RR$,
vector $\bmx=(x_1, \ldots,x_d) \in \RR^d$ and scalar $x,w \in \RR$ we have 
\begin{align}
\left\{
\begin{array}{ll}
\LL^{(d)} \phi(\bmx,w)
&=
\EE_{\bmx,w}[ \phi(X^{(d)}_{1,1})-\phi(X^{(d)}_{0,1})] /  \delta\\
\wtilde{\LL}^{(d)} \phi(\bmx,w)
&=
\EE_{\bmx,w}[ \phi(\wtilde{X}^{(d)}_{1,1})-\phi(\wtilde{X}^{(d)}_{0,1})] / (T^{(d)} \times \delta)
\qquad \textrm{for} \qquad \delta \equiv 1/d\\
\LL \phi(x) 
&= 
\frac{1}{2}\, J(\mu) \times \left( \loglik'(x) \phi'(x) + \phi''(x) \right).
\end{array}
\right.
\end{align}
Note that although $\phi$ is a scalar function, the functions $\LL^{(d)} \phi$ and $\wtilde{\LL}^{(d)}\phi$ are defined on $\RR^d \times \RR$.
The law of iterated conditional expectation yields the important identity between the generators $\LL^{(d)}$ and $\wtilde{\LL}^{(d)}$,
\begin{align}
\label{eq.telescoping}
\wtilde{\LL}^{(d)}\phi(\bmx,w)
=
\frac{1}{T^{(d)}} \, \EE_{\bmx,w}\sqBK{ \sum_{k=0}^{T^{(d)}-1} \LL^{(d)}\phi \left( \bmX^{(d)}_k, W^{(d)}_k \right) }.
\end{align}
For clarity, the proof of Theorem \ref{thm.diff.lim} is divided into several steps.\\

\subsubsection{The finite dimensional marginals of $\wtilde{V}^{d}$ converge to those of the diffusion \eqref{e.limiting.diffusion}}
Since the limiting process is a scalar diffusion, the set of smooth and compactly supported functions
is a core for the generator of the limiting diffusion (\cite{ethier1986markov},Theorem $2.1$, Chapter $8$); in the sequel, one can thus work with test functions belonging to this core only.
Because the processes are started at stationarity, it suffices to show 
(\cite{ethier1986markov},Chapter $4$, Theorem $8.2$, Corollary $8.4$) that 
for any smooth and compactly supported function $\phi:\RR \to \RR$ the following limit holds,
\begin{align} \label{Lp.convergence.generator}
\lim_{d \to \infty} \; 
\EE \left[ \left| \wtilde{\LL}^{(d)}\phi(X_1, \ldots, X_d,W) - \LL \phi(X_1) \right|^2 \right]
= 0.
\end{align}
The proof of Equation \eqref{Lp.convergence.generator} spans the remaining of this section and is based on an asymptotic expansion that we now describe.
For every $x,w \in \RR$ we define the approximated generator $\A \phi: \RR \times \RR \to \RR$ by
%
%
\begin{align}
\label{eq.approximate.generator.2}
\A \phi(x,w) = 
\left( \frac{\mu}{I} \right)^2 \left\{ A(w) \, \loglik'(x) + \left( \frac{1}{2} \alpha_{12} + [A(w) - B(w)] \, \partial_x \SS(x, \gamma_1) \right) \, \phi^{''}(x) \right\}
\end{align}
where $A,B: \RR \to (0;\infty)$ are two bounded and continuous functions defined by
\begin{align} \label{eq.A.B}
\left\{
\begin{array}{ll}
A(w) &= \EE \big[ F'(Q^{\infty}_{\Delta}+S^{\infty}_{\Delta}) \times F(W^*-w - S^{\infty}_{\Delta})\big]\\
B(w) &= \EE \big[ F(Q^{\infty}_{\Delta}+S^{\infty}_{\Delta}) \times F'(W^*-w - S^{\infty}_{\Delta})\big]
\end{array}
\right.
\end{align}
for $W^* \dist \pi_{W^*}$ 
and $F'(u) = e^u \, \mathbb{I}_{u<0}$
and $(Q^{\infty}_{\Delta},S^{\infty}_{\Delta})$ as defined in \eqref{eq.limiting.Q.S}. The functions $A,B: \RR \to \RR_+$ are such that
\begin{align}
\label{eq.chris.lemma}
\Expect{ A(W) } = \Expect{ B(W) } = \frac{1}{2} \, \alpha_{12}.
\end{align}
The proof of \eqref{eq.chris.lemma} can be found in Appendix \ref{appendix.chris.lemma}. It follows from \eqref{eq.chris.lemma} that for any fixed $x \in \RR$ we have
\begin{align}
\label{eq.averaging.A}
\EE\big[ \A \phi(x,W) \big] = \LL \phi(x)
\end{align}
for a random variable $W \dist \pi_W$.
%
%
\begin{lem}
\label{lem.generator.approx}
Let Assumptions \ref{ass.regularity} hold. We have
\begin{align}
\label{generator.approx}
\lim_{d \to \infty} \; 
\Expect{ \, \left| \LL^{(d)} \phi(X_1, \ldots, X_d, W)
- \A \phi(X_1, W) \right|^2 \, } \; = \; 0.
\end{align}
\end{lem}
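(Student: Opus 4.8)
The plan is to expand $\LL^{(d)}\phi$ to second order in the proposal increment and to exploit the independence of the first proposal coordinate from the remaining $d-1$ coordinates, identifying the limiting coefficients through Lemma \ref{lem.pivotal}. Writing $\epsilon := (\mu/I)\,d^{-1/2}$ and $Z := Z_{0,1}$, and integrating out the accept/reject uniform so that the overall acceptance probability \eqref{eq.accept.proba.formulae} appears, we have
\begin{equs}
\LL^{(d)} \phi(\bmx,w)
= d \, \EE\Big[ \big( \phi(x_1 + \epsilon Z) - \phi(x_1) \big) \, F\big(\mathsf{Q}^{(d)}_\Delta + \mathsf{S}^{(d)}_\Delta\big) \, F\big(\mathsf{W}^{(d)}_\Delta - \mathsf{S}^{(d)}_\Delta\big) \Big],
\end{equs}
with $\mathsf{W}^{(d)}_\Delta = W^* - w$ and $\mathsf{Q}^{(d)}_\Delta, \mathsf{S}^{(d)}_\Delta$ as in \eqref{eqn.QSWdefs}.

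The key step is to peel off the first-coordinate contributions, $\mathsf{Q}^{(d)}_\Delta = q_1 + \mathsf{Q}^{(d),\perp}_\Delta$ and $\mathsf{S}^{(d)}_\Delta = s_1 + \mathsf{S}^{(d),\perp}_\Delta$, where $q_1 = \log[\pi(x_1+\epsilon Z)/\pi(x_1)]$ and $s_1 = \SS(x_1+\epsilon Z,\gamma_1) - \SS(x_1,\gamma_1)$ are of order $\epsilon$ and carry all the dependence on $Z$, whereas the perpendicular sums over coordinates $2,\ldots,d$ are independent of $Z$ and, by Lemma \ref{lem.pivotal}, converge in distribution to $(Q^\infty_\Delta, S^\infty_\Delta)$ jointly with the independent $W^* \dist \pi_{W^*}$. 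Using Assumption \ref{ass.regularity} to Taylor expand $\phi(x_1+\epsilon Z) - \phi(x_1) = \epsilon Z \phi'(x_1) + \tfrac12 \epsilon^2 Z^2 \phi''(x_1) + O(\epsilon^3)$, together with $q_1 + s_1 = \epsilon Z (\loglik'(x_1) + \partial_x \SS(x_1,\gamma_1)) + O(\epsilon^2)$ and $s_1 = \epsilon Z \, \partial_x \SS(x_1,\gamma_1) + O(\epsilon^2)$, and expanding $F$ to first order about the perpendicular arguments $a := \mathsf{Q}^{(d),\perp}_\Delta + \mathsf{S}^{(d),\perp}_\Delta$ and $b := W^* - w - \mathsf{S}^{(d),\perp}_\Delta$ (with $F'(u) = e^u \, \mathbb{I}_{u<0}$), one retains only the terms of order $\epsilon^2$, since $d\epsilon^2 = (\mu/I)^2$ while $d\epsilon = (\mu/I)\sqrt{d}$.

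The order-$\epsilon$ contribution carries a factor $\EE[Z] = 0$ and disappears; using $\EE[Z^2] = 1$ and the independence of $Z$ from $(a,b)$, the surviving terms multiplied by $d$ are finite linear combinations of $\EE[F(a)F(b)]$, $\EE[F'(a)F(b)]$ and $\EE[F(a)F'(b)]$, with coefficients assembled from $\phi'(x_1), \phi''(x_1), \loglik'(x_1)$ and $\partial_x\SS(x_1,\gamma_1)$. Since $F$ and $F'$ are bounded and continuous off a single point, bounded convergence and the convergence of the perpendicular sums let these expectations converge to the bounded continuous functionals $A(w)$, $B(w)$ of \eqref{eq.A.B} and the analogous functional of $(Q^\infty_\Delta, S^\infty_\Delta, W^*)$ governing the second-order term, reproducing $\A\phi(x_1,w)$ of \eqref{eq.approximate.generator.2}. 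To upgrade this pointwise-in-$(x_1,w)$ convergence to the $L^2(\pi^{(d)} \otimes \pi_W)$ statement \eqref{generator.approx}, note that $\LL^{(d)}\phi(\bmx,w)$ is itself a conditional expectation, uniformly bounded because $\phi$ has compact support and $F, F', \phi', \phi''$ are bounded; the almost-sure convergence \eqref{eq.as.cv.betas} of the empirical averages over coordinates $2,\ldots,d$ then yields convergence of the relevant conditional expectations, and a final application of dominated convergence gives \eqref{generator.approx}.

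The main obstacle is the non-smoothness of $F$ at the origin, which forbids a naive second-order expansion of $F$; this is handled by observing that the perpendicular argument $a$ possesses a limiting density, so the kink is met with vanishing probability, while the global Lipschitz bound on $F$ controls the remainders uniformly. A second, more structural subtlety is that the drift coefficients $A(w)$ and $B(w)$ arise solely from the correlation between the $Z$ appearing in $\phi'(x_1)$ and the same $Z$ entering $q_1$ and $s_1$ inside the accept--reject functions; it is precisely this cross term, and its dependence on $w$, that subsequently drives the homogenisation over the fast $W$-dynamics through the averaging identity \eqref{eq.averaging.A}.
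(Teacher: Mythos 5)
Your proposal is correct and arrives at the right limiting operator $\A\phi$, but it takes a genuinely different route for the key step. The paper also begins with a second-order Taylor expansion of $\phi$, splitting $\LL^{(d)}\phi$ into a drift term $(1/\delta)\,\EE_{\bmx,w}\bigl[(X^{(d),*}_{1,1}-x_1)\,\alpha^{(d)}_{12}\bigr]\,\phi'(x_1)$ and a volatility term, and it identifies the limits of the resulting expectations via Lemma \ref{lem.pivotal}, exactly as you do. The difference lies in how the drift (cross) term is computed: the paper applies Stein's identity $\EE[Z\,g(Z)]=\EE[g'(Z)]$ to $g(Z_1)=\alpha^{(d)}_{12}$, which is legitimate because the composition is continuous and piecewise $C^1$; differentiating the product $F(\mathsf{Q}^{(d)}_{\Delta}+\mathsf{S}^{(d)}_{\Delta})\,F(\mathsf{W}^{(d)}_{\Delta}-\mathsf{S}^{(d)}_{\Delta})$ in $Z_1$ then produces the $F'F$ and $FF'$ terms, with the factors $\loglik'+\partial_x\SS$ and $-\partial_x\SS$ appearing automatically, evaluated at the \emph{full} arguments. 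You instead peel off the first-coordinate contributions $q_1,s_1$, expand $F$ to first order about the perpendicular arguments $a,b$, and use $\EE[Z]=0$, $\EE[Z^2]=1$ together with the independence of $Z$ from $(a,b)$. Both yield the same three expectations $\EE[F(a)F(b)]$, $\EE[F'(a)F(b)]$, $\EE[F(a)F'(b)]$ and hence $A(w)$, $B(w)$, $\alpha_{12}$ in the limit. What each approach buys: Stein's identity sidesteps the non-differentiability of $F$ at the origin entirely (no expansion of $F$ is ever performed), making the paper's argument slicker; your more elementary expansion must confront the kink explicitly, and your fix --- that the perpendicular argument $a$ converges to a non-degenerate Gaussian, so the straddling event $\{|a|\le |q_1+s_1|\}$ has vanishing probability while the global Lipschitz bound controls the remainder there --- is sound and can be made rigorous. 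One point to tighten: your claim that $\LL^{(d)}\phi$ is \emph{uniformly} bounded is not literally what you need; rather, after the expansion the surviving terms are dominated by a square-integrable (indeed, since $\phi'$ has compact support and $\loglik'$ is continuous, bounded) envelope uniformly in $d$, and it is this domination, combined with the almost-sure convergence from Lemma \ref{lem.pivotal}, that upgrades pointwise convergence to the $L^2$ statement --- the same level of care (no more, no less) that the paper exercises with its $o_{L^2}(1)$ bookkeeping.
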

The proof of Lemma \eqref{lem.generator.approx} consists in second order Taylor expansion and an averaging argument; details are in Section \ref{sec.proof.lem.generator.approx}. For proving Equation \eqref{Lp.convergence.generator}, note that identity \eqref{eq.telescoping} and Jensen's inequality yield 
the quantity inside the limit described in Equation \eqref{Lp.convergence.generator} is less than two times the expectation of
\begin{align}
\left\{ \frac{ \sum_{k=0}^{T^{(d)}} \LL^{(d)}\phi\left(\bmX^{(d)}_k,W^{(d)}_k\right) - \A\phi\left( \bmX^{(d)}_{k,1},W^{(d)}_k \right)  }{  T^{(d)} }\right\}^2 + 
\left\{\frac{ \sum_{k=0}^{T^{(d)}} \A\phi \left(\bmX^{(d)}_{k,1},W^{(d)}_k\right) - \LL\phi \left(\bmX^{(d)}_{0,1}\right) }{ T^{(d)}}\right\}^2.
\end{align}
The expectation of the first term is less than $\Expect{ \big| \LL^{(d)} \phi(X_1, \ldots, X_d, W)- \A \phi(X_1, W) \big|^2 }$ and Lemma \eqref{lem.generator.approx} shows that this quantity goes to zero as $d \to \infty$. To finish the proof it thus remains to verify that the expectation of the second term also converges to zero; to prove so, note that the second term is less than two times
\begin{align}
\label{eq.A.L}
\frac{\sum_{k=0}^{T^{(d)}} \left| \A\phi(X^{(d)}_{k,1},W^{(d)}_k) - \A\phi(X^{(d)}_{0,1},W^{(d)}_k) \right|^2}{T^{(d)}}
+
\left\{
  \sum_{k=0}^{T^{(d)}} \A\phi\left( X^{(d)}_{0,1},W^{(d)}_k \right) - \LL\phi\left( X^{(d)}_{0,1} \right){T^{(d)}} \right\}^2.
\end{align}
Under the assumptions of Theorem \ref{thm.diff.lim}, it is straightforward to verify that the function $\A \phi$ is globally Lipschitz in the sense that there exists a constant $\|\A \phi\|_{\textrm{Lip}}$ such that for every $x_1, x_2, w \in \RR$ we have $|\A \phi(x_1, w) - \A \phi(x_2, w)| \leq \|\A \phi\|_{\textrm{Lip}} \times |x_1 - x_2|$; since $\EE[(X^{(d)}_{k,1} - X^{(d)}_{0,1})^2] \lesssim k^2 / d$, it follows that
\begin{align*}
\EE \BK{ \big| \A\phi(X^{(d)}_{k,1},W^{(d)}_k) - \A\phi(X^{(d)}_{0,1},W^{(d)}_k) \big|^2 }
\lesssim \frac{k^2}{d}.
\end{align*}
Consequently, the expectation of the first term in \eqref{eq.A.L} converges to zero. For proving that the second term also converges to zero, we make use of the following ergodic averaging Lemma whose proof can be found in Section \ref{sec.proof.lem.averaging}.
%
%
\begin{lem}\label{lem.averaging}
Let $h:\RR \to \RR$ be a bounded and measurable test function. We have
\begin{align*}
\lim_{d \to \infty} \;
\Expect{ \, \left|  \frac{ \sum_{k=0}^{T^{(d)}-1} h(W^{(d)}_k)}{T^{(d)}} - \Expect{h(W)} \right|^2 \, } = 0,
\end{align*}
for a random variable $W \dist \pi_W$ independent from any other sources of randomness. 
\end{lem}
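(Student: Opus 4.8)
The plan is to use stationarity to reduce the statement to the decay of the variance of the time average, and then to bound the autocovariances of the sequence $\{h(W^{(d)}_k)\}_{k \ge 0}$ by exploiting the fact that every accepted move refreshes the noise variable with a fresh draw governed by $\pi_{W^*}$. Since the chain $(\bmX^{(d)}_k,W^{(d)}_k)$ runs at stationarity, each $W^{(d)}_k$ is marginally distributed as $\pi_W$, so $\Expect{h(W^{(d)}_k)} = \Expect{h(W)}$ and the centred function $g := h - \Expect{h(W)}$ has zero mean under $\pi_W$. Writing $\bar g_d := \frac{1}{T^{(d)}}\sum_{k=0}^{T^{(d)}-1} g(W^{(d)}_k)$, the quantity in the statement is exactly $\Expect{\bar g_d^{\,2}} = \Var{\bar g_d}$ (as $\Expect{\bar g_d}=0$), so it suffices to show $\Var{\bar g_d} \to 0$.

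Expanding and using stationarity, $\Var{\bar g_d} = \frac{1}{T^{(d)}}\Big(c_d(0)+2\sum_{\ell=1}^{T^{(d)}-1}\big(1-\tfrac{\ell}{T^{(d)}}\big)c_d(\ell)\Big)$, where $c_d(\ell) := \Cov{g(W^{(d)}_0),g(W^{(d)}_\ell)}$ and $|c_d(0)|\le \norm{g}_\infty^2$ uniformly in $d$. The crux is a uniform bound forcing the autocovariances to be small at large lags. Conditioning on $\FF_0$ and using that $g$ is centred, $c_d(\ell) = \Expect{g(W^{(d)}_0)\,\Psi_\ell(\bmX^{(d)}_0,W^{(d)}_0)}$ with $\Psi_\ell(\bmx,w) := \EE_{\bmx,w}[g(W^{(d)}_\ell)]$, so it is enough to control $\Psi_\ell$. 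Here I would use the refresh structure: at each accepted step $W$ is replaced by the fresh proposal $W^*\dist\pi_{W^*}$, and since by Proposition \ref{prop.limi.accept.proba} the overall acceptance rate converges to $\alpha_{12}>0$, at stationarity these refreshes occur at a steady positive rate over the $T^{(d)}$ steps.

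To make the decay quantitative and uniform in $d$, I would compare the $W$-dynamics to the one obtained when the spatial state is held fixed. Over $T^{(d)}=\floor{d^{\gamma}}$ steps each coordinate of $\bmX^{(d)}$ moves by $O(d^{\gamma-1/2})\to 0$ because $\gamma<1/4$, so the current position is essentially frozen; replacing the proposal increments $(Q^{(d)}_\Delta,S^{(d)}_\Delta)$ by the limiting Gaussian pair $(Q^{\infty}_\Delta,S^{\infty}_\Delta)$ of Lemma \ref{lem.pivotal} (whose law is position-free) yields a single $d$-independent Markov chain on $\RR$ in which one proposes $W^*\dist\pi_{W^*}$ together with $(Q^{\infty}_\Delta,S^{\infty}_\Delta)$, accepts with probability $F(Q^{\infty}_\Delta+S^{\infty}_\Delta)\,F(W^*-W-S^{\infty}_\Delta)$, and sets $W\leftarrow W^*$ on acceptance. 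This chain can be shown to admit $\pi_W$ as invariant law and to be Harris ergodic, so its autocovariances $c_\infty(\ell)$ of the bounded function $g$ vanish as $\ell\to\infty$; carrying the near-freezing estimate through with explicit error terms (this is where $\gamma<1/4$ enters) gives $\sup_{\ell\le T^{(d)}}|c_d(\ell)-c_\infty(\ell)|\to 0$. Since $T^{(d)}\to\infty$, a Cesàro argument then yields $\Var{\bar g_d}\to 0$; crucially only $c_\infty(\ell)\to 0$ is needed, not summability, because the normalisation $1/T^{(d)}$ together with the growing number of terms absorbs even a slowly decaying tail.

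The main obstacle is precisely this uniform-in-$d$ control of the autocovariance decay. Two features make it delicate. First, $W^{(d)}_k$ is not Markov on its own, since the accept/reject decision couples it to $\bmX^{(d)}$, so the reduction to the frozen-position chain must be justified with error terms controlled by the $\gamma<1/4$ scaling rather than waved through. Second, the limiting chain has the unbounded importance weight $\mathrm{d}\pi_W/\mathrm{d}\pi_{W^*}(w)=e^{w}$ from \eqref{eq.change.W.W}, so it is not geometrically ergodic and one cannot invoke a spectral gap; the pointwise acceptance probability also degrades as $w\to\infty$. The argument must therefore rest on ergodicity alone (giving $c_\infty(\ell)\to 0$) together with the boundedness of $g$, via a coupling at the refresh times: whenever an accepted proposal $W^*$ lands in the bulk of $\pi_W$ the conditional law of $W^{(d)}_\ell$ can be coupled to $\pi_W$, giving $|\Psi_\ell(\bmx,w)|\le 2\norm{g}_\infty\,\Prob{\mathcal T > \ell}$ for a coupling time $\mathcal T$ whose tail is controlled uniformly over the bulk, with the rare excursions to large $w$ handled separately using the stationary tail of $\pi_W$.
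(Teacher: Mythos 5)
Your proposal is correct and follows essentially the same route as the paper: both arguments freeze the spatial position over the window of $T^{(d)}=\lfloor d^{\gamma}\rfloor$ steps (with $\gamma<1/4$ controlling the accumulated $O(k\,d^{-1/2})$ per-step error in the acceptance quantities), replace the increments $(Q^{(d)}_{\Delta},S^{(d)}_{\Delta})$ by i.i.d.\ copies of the limiting Gaussian pair from Lemma \ref{lem.pivotal}, and appeal to ergodicity of the resulting $d$-independent $W$-chain, which is invariant (indeed reversible) with respect to $\pi_W$. The only difference is bookkeeping: you convert the frozen-position comparison into uniform closeness of autocovariances and finish with a Ces\`aro argument, whereas the paper couples the true and idealised chains so that their paths coincide over the whole window with probability tending to one and then applies the $L^2$ ergodic theorem directly to the idealised chain.
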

Identity \eqref{eq.averaging.A}, a standard conditioning argument and Lemma \ref{lem.averaging} yield that the expectation of the second term in Equation \eqref{eq.A.L} also converges to zero; this finishes the proof of  the convergence of the finite dimensional marginals of $\wtilde{V}^{d}$ to those of the limiting diffusion \eqref{e.limiting.diffusion}.

\subsubsection{The sequence $\wtilde{V}^{d}$ converges weakly towards the diffusion \eqref{e.limiting.diffusion}}
The finite dimensional marginals of the sequence process
$\wtilde{V}^{d}$ converges to those of the diffusion
\eqref{e.limiting.diffusion}. To prove that the sequence
$\wtilde{V}^{d}$ actually converges to the diffusion
\eqref{e.limiting.diffusion}, it thus suffices to verify that the
sequence $\wtilde{V}^{d}$ is relatively weak compact in the  Skorohod
topology: since the process $\wtilde{V}^{(d)}$ is started at
stationarity and the space of smooth functions with compact support is
an algebra that strongly separates points, (\cite{ethier1986markov},
Chapter $4$, Corollary $8.6$)  states that it suffices to show that for any smooth and compactly supported test function $\phi$ the sequence $d \mapsto \EE \big| \wtilde{\LL}^{(d)} \phi(X_1, \ldots, X_d,W) \big|^2$ is bounded. Equation \eqref{Lp.convergence.generator} shows that it suffices to verify that $\EE \big| \LL \phi(X) \big|^2 < \infty$ for $X \dist \pi$, which is obvious since $\phi$ is assumed to be smooth with compact support.
\subsubsection{The sequence $V^{d}$ converges weakly towards the diffusion \eqref{e.limiting.diffusion}}
Because the sequence $\wtilde{V}^{d}$ converges weakly to the diffusion \eqref{e.limiting.diffusion}, it suffices to prove that the difference $\|V^{d}-\wtilde{V}^{d}\|_{\infty,[0,T]}$ goes to zero in probability. To this end, it suffices to prove that the supremum
\begin{equation} \label{eq.supremum}
\sup \left\{ \, 
\left|X^{(d)}_{kT^{(d)}+i,1} - X^{(d)}_{kT^{(d)},1}  \right| \; : \; k \times T^{(d)} \leq d
\times T, \; i \leq T^{(d)} \, \right\}
\end{equation}
converges to zero in probability. The quantity $\left|X^{(d)}_{kT^{(d)}+i,1} - X^{(d)}_{kT^{(d)},1}  \right|$ is less than a constant times 
\begin{equation*}
\frac{1}{d^{1/2}} \, \Big\{ \big|Z_{k T^{(d)},1}\big| + \ldots + \big|Z_{(k+1)T^{(d)}-1,1} \big| \Big\}.\end{equation*}
Therefore, for any $\epsilon > 0$ and integer $p \geq 1$ and an i.i.d sequence of standard Gaussian random variables $\{\xi_k\}_{k \geq 0}$, the union bound and Markov's inequality show the probability that the supremum in Equation \eqref{eq.supremum} is larger than $\epsilon$ is less than a constant multiple of
\begin{align*}
\frac{d}{T^{(d)}} \times \PP\sqBK{|\xi_1| + \ldots + |\xi_{T^{(d)}|} > \epsilon \, d^{1/2}}
&\leq
\frac{d}{T^{(d)}} \times \frac{ \EE\sqBK{ \BK{ |\xi_1| + \ldots + |\xi_{T^{(d)}}| }^p} }{ \epsilon^p \, d^{p/2}}
\end{align*}
Since $T^{(d)} = d^{\gamma}$ and for every integer $p \geq 1$ we have that $\EE\sqBK{ \BK{ |\xi_1| + \ldots + |\xi_{n}| }^p} \leq C(p) \, n^p$ for a constant $C(p)$ that only depends on $p$, it follows that
\begin{align*}
\frac{d}{T^{(d)}} \times \PP\sqBK{|\xi_1| + \ldots + |\xi_{T^{(d)}|} > \epsilon \, d^{1/2}}
\lesssim d^{1-\gamma + p [\gamma - 1/2]}.
\end{align*}
Since $\gamma < 1/2$, one can choose $p$ large enough such that $1-\gamma + p [\gamma - 1/2] < 0$. This concludes the proof of Theorem \ref{thm.diff.lim}.

\section{Proof of technical results}
\label{sec.technical}
In this section we denote by $\Phi(x) = \int_{-\infty}^x \phi(u) \, du$ the cumulative Gaussian function with $\phi(u) = e^{-u^2/2} / \sqrt{2 \pi}$. The bound $1-\Phi(x) < \phi(x)/x$ for $x>0$ is used in several places.

\subsection{Proof of Proposition \ref{prop.acc.rates.dec}}
\label{sec.prop.acc.rates.dec}
The only not entirely trivial parts of this proposition involve
establishing that $\alpha_1$ and $\alpha_{2|1}$ are decreasing in
$\mu$ and $\sigma$ respectively. For proving that
$\alpha_1=G\BK{-\frac{\mu^2}{2}(1-\beta_1), \, \mu^2 \,
  \BK{1+\beta^2_2 - 2 \beta_1}}$ is decreasing as a function of $\mu$
when $\beta_1<1$, note that since $|\beta_1|<\beta_2$, 
$1+\beta^2_2 - 2 \beta_1 \ge (1-\beta_1)^2$; hence  it suffices to
show that for any positive constant $c>0$ the function $h:\mu \mapsto G(-\mu^2, c^2\mu^2)$ is decreasing. Since $h(\mu) = \Expect{F(-\mu^2 + c \, \mu \, \xi)}$ for a random variable $\xi \dist \Normal{0,1}$ and $F'(x) = e^x \, \mathbb{I}(x<0)$ it follows that 
\begin{align*}
h'(\mu)=\int_{z \in \mathbb{R}} F'(-\mu^2 + c \, \mu \, z) \, (-2 \, \mu + c \, z)\, \phi(z) \, dz
&=
\int_{z < \mu/c} F'(-\mu^2 + c \, \mu \, z) \, (-2 \, \mu + c \, z) \, \phi(z) \, dz.
\end{align*}
This quantity is negative since $-2 \, \mu + c \, z < 0$ on the event $\{z : z<\mu/c\}$. Proving that $\alpha_{2|1}$ is decreasing as a function of $\sigma$ readily follows from the fact that
for any fixed $a \in \mathbb{R}$ the derivative of the function
$\sigma \mapsto G(-\sigma^2+a, 2\sigma^2)<-\sqrt{2}\phi(-\sigma/\sqrt{2}+a/(\sigma\sqrt{2})) < 0$  and differentiation under the integral sign.

\subsection{Proof of Theorem \ref{thrm.behave.eff}}
\label{sec.proof.thrm.behave.eff}
Since $\eff(\mu, \sigma^2) = \frac{\mu^2 \, \alpha_{12}(\mu, \sigma)}{\eta + \alpha_1(\mu) / \sigma^2}$, for a fixed value of scaling $\mu>0$ the efficiency functional goes to zero as $\sigma \to 0$ and $\sigma \to \infty$. Similarly, the fact that the efficiency goes to zero as $\mu \to 0$ for any fixed value of $\sigma > 0$ is straightforward; it remains to verify that the efficiency also converge to zero as $\mu \to \infty$. It suffices to show that $\mu^2 \, \alpha_{12}(\mu, \sigma) \to 0$; since for any $x,y \in \RR$ we have $\min\left( 1, e^x\right) \, \min\left( 1, e^y\right) \leq \min\left( 1, e^{x+y}\right)$, 
\begin{equation*}
\alpha_{12} \leq \Expect{F\left( Q^{\infty}_{\Delta} + W_{\Delta}\right)} = 2 \, \Phi\left\{ -\frac{(\mu^2 + 2\sigma^2)^{1/2}}{2}\right\}
\end{equation*}
and the conclusion readily follows.

\subsection{Proof of Equation \eqref{eq.chris.lemma}}
\label{appendix.chris.lemma}
Equation \ref{eq.change.W.W} yields that $R \equiv W^* - W$ for $(W^*,W) \sim \pi_{W^*} \otimes \pi_W$ has a density $\pi_R$ such that the function $r \mapsto e^{r/2} \, \pi_R(r)$ is symmetric i.e. $e^{r/2} \, \pi_R(r)=e^{-r/2} \, \pi_R(-r)$. 
Similarly, algebra reveals that the joint Gaussian density $\pi_{Q,S}(q,s)$ of the pair $\BK{Q^{\infty}_{\Delta}, S^{\infty}_{\Delta}}$ described in Lemma \ref{lem.pivotal} is such that
\begin{equation*}
e^{q/2} \, \pi_{Q,S}(q,s)
=
e^{-q/2} \, \pi_{Q,S}(-q,-s).
\end{equation*}
That is because $- \log \pi_{Q,S}(q,s) = a\, q^2 + b \, s^2 + c \, qs - q/2 + \textrm{(constant)}$ for some coefficients $a,b,c \in \RR$.
Consequently, since the accept reject function $F$ is such that $e^{-u}F(u)=F(-u)$ for any $u \in \RR$, the function 
\begin{align*}
g(q,r,s)
&= e^{(q+s)/2} \, F(r-s) \, \pi_{Q,S}(q,s) \, \pi_R(r)\\
&= e^{-(r-s)/2} \, F(r-s) \, \BK{ e^{q/2} \, \pi_{Q,S}(q,s)} \, \BK{ e^{r/2} \, \pi_R(r) }
\end{align*}
is such that $g(q,r,s)=g(-q,-r,-s)$. It follows that
\begin{align*}
\Expect{ A(W) } &= \Expect{ F'(Q^{\infty}_{\Delta}+S^{\infty}_{\Delta}) \times F(R - S^{\infty}_{\Delta}) }
=
\iiint_{\RR^3} F'(q+s) \, F(r-s) \, \pi_{Q,S}(q,s) \, \pi_R(r) \, dq \, dr \, ds\\
&=
\iiint_{\RR^3} e^{-(q+s)/2}F'(q+s)\, g(q,r,s) \, dq \, dr \, ds
=
\iiint_{\RR^3} e^{(q+s)/2}F'(-[q+s])\, g(q,r,s) \, dq \, dr \, ds\\
&=
\Expect{e^{Q^{\infty}_{\Delta}+S^{\infty}_{\Delta}}F'(-[Q^{\infty}_{\Delta}+S^{\infty}_{\Delta}]) F(R-S^{\infty}_{\Delta})}.
\end{align*}
Consequently, since $F'(u)+e^u \, F'(-u)=F(u)$ for $u \in \RR$, it follows that
\begin{align*}
2 \times \Expect{ A(W) } 
&= \Expect{F'(Q^{\infty}_{\Delta}+S^{\infty}_{\Delta}) \times F(R - S^{\infty}_{\Delta})+e^{Q^{\infty}_{\Delta}+S^{\infty}_{\Delta}}F'(-[Q^{\infty}_{\Delta}+S^{\infty}_{\Delta}]) F(R-S^{\infty}_{\Delta})}\\
&= \Expect{ F(Q^{\infty}_{\Delta}+S^{\infty}_{\Delta})} \equiv \alpha_{12}.
\end{align*}
The proof that $\Expect{B(W)} = \alpha_{12} / 2$ is similar and thus omitted.

\subsection{Proof of Lemma \ref{lem.generator.approx}}
\label{sec.proof.lem.generator.approx}
In this section we need to consider asymptotic expansions of the type $\EE_{\bmx,w}[\ldots] =\Psi(\bmx,w) + \textrm{(error term)}$, where $(\bmx,w) \in \RR^d \times \RR$ and $\textrm{(error term)} = \epsilon_d(\bmx,w)$ for a function $\epsilon_d: \RR^d \times \RR \to \RR$. We use the notation $\textrm{(error term)} = o_{L^2}(1)$ to indicates that, under the equilibrium distribution, the moment of order two of the error term is asymptotically negligible, $\Expect{\epsilon_d(\bmX^{(d)},W)^2} \to 0$ as $d \to \infty$ for $(\bmX^{(d)},W) \dist \pi^{(d)} \otimes \pi_W$.
Since $\phi$ is smooth with compact support, a second order Taylor expansion reveals that
\begin{align} \label{eq.gen.asymp}
\LL^{(d)} \phi(\bmx,w)
&=
\textrm{(drift term)} \, \phi'(\bmx)
+
(1/2) \, \textrm{(volatility term)} \, \phi^{''}(\bmx)
+ o_{L^2}(1)
\end{align}
where the drift and volatility terms are given by the following conditional expectations,
\begin{align}
\left\{
\begin{array}{ll}
\textrm{(drift term)}
&\;=\;
(1/\delta) \times \EE_{\bmx,w} \left[  \left(X^{(d),*}_{1,1} - x_1 \right) \, \alpha^{(d)}_{12}\left(\bmx,w,\bmX^{(d),*},W^{(d),*}\right) \right]  \\
\textrm{(volatility term)}
&\;=\;
(1/\delta) \times \EE_{\bmx,w} \left[  \left(X^{(d),*}_{1,1} - x_1\right)^2 \, \alpha^{(d)}_{12}\left(\bmx,w,\bmX^{(d),*},W^{(d),*}\right) \right]
\end{array}
\right.
\end{align}
with $\bmX^{(d),*}_1 = \bmx + (\mu / I) \,  \delta^{1/2} \, \bmZ^{(d)}$ and standard centred Gaussian random variable $\bmZ^{(d)} = (Z_1, \ldots, Z_d)$
\begin{itemize}
\item It readily follows from Lemma \ref{lem.pivotal} that for $\pi$-almost every $\bmx$ we have
\begin{equation} \label{eq.vol.asymp}
\textrm{(volatility term)} = \alpha_{12} \times (\mu / I)^2 = J(\mu) + o_{L^2}(1).
\end{equation}
\item
For the drift term, we make use of the following integration-by-part formula, also known as Stein's identity,
%
%
\begin{equation} \label{eq.stein}
\Expect{ Z \times g(Z) } = \Expect{ g'(Z) }
\qquad \textrm{for} \qquad Z \dist \Normal{ 0,1 },
\end{equation}
which holds for any continuous and piecewise continuously differentiable function $g: \RR \to \RR$ such that $x \mapsto \max\left( g(x),g'(x) \right)$ is polynomially bounded. In what follows, $F'(u) = e^u \, \mathbb{I}_{u<0}$. The expression for $\alpha_{12}^{(d)}\left( \bmx,w,\bmX^{(d),*},W^{(d),*} \right)$, identity \eqref{eq.stein} and standard algebraic manipulations yield that
\begin{align} \label{eq.drift.asymp}
\begin{aligned}
\textrm{(drift term)}
&\;=\;
\delta^{-1/2} \, (\mu / I) \, \EE_{\bmx,w}[ Z_1  \, \alpha^{(d)}_{12}(\bmx,w,\bmX^{(d),*},W^{(d),*})] \\
&\;=\;
(\mu / I)^2 \, \EE_{\bmx,w}[ 
F'(\mathsf{Q}^{(d)}_{\Delta}+\mathsf{S}^{(d)}_{\Delta}) \, F(\mathsf{W}^{(d)}_{\Delta}-\mathsf{S}^{(d)}_{\Delta}) \,  \big\{\loglik'(X^{(d),*}_{1,1}) + \partial_x \SS(X^{(d),*}_{1,1},\gamma_1)\big\}]\\
&\qquad -
(\mu / I)^2 \, \EE_{\bmx,w}[ 
F(\mathsf{Q}^{(d)}_{\Delta}+\mathsf{S}^{(d)}_{\Delta}) \, F'(\mathsf{W}^{(d)}_{\Delta}-\mathsf{S}^{(d)}_{\Delta}) \,  \partial_x \SS(X^{(d),*}_{1,1},\gamma_1)]\\
&\;=\;
(\mu / I)^2 \, A(w)  \, \loglik'(x_1)
+
(\mu / I)^2 \, [A(w) - B(w)] \, \partial_x \SS(x_1, \gamma_1)
+ o_{L^2}(1),
\end{aligned}
\end{align}
where the functions $A,B:\RR \to \RR^+$ are defined in Equation \eqref{eq.A.B} and the quantities $\mathsf{Q}^{(d)}_{\Delta},\mathsf{S}^{(d)}_{\Delta}$ and $\mathsf{W}^{(d)}_{\Delta}$ in Equation \eqref{eqn.QSWdefs}.
\end{itemize}
Plugging \eqref{eq.drift.asymp} and \eqref{eq.vol.asymp} into \eqref{eq.gen.asymp} shows that the limit
\begin{equation*}
\lim_{d \to \infty} \; \Expect{ \left| \LL^{(d)} \phi(\bmX^{(d)},W) - \A \phi(X^{(d)}_1, W) \right|^2} = 0
\end{equation*}
holds for $\left( \bmX^{(d)}, W \right) \sim \pi^{(d)} \otimes \pi_W$, as required.

\subsection{Proof of Lemma \ref{lem.averaging}}
\label{sec.proof.lem.averaging}
The strategy of the proof is as follows. We define three stochastic processes 
$\left\{ W^{(d)}_{\clubsuit,k} \right\}_{k \geq 0}$,
$\left\{ W^{(d)}_{\spadesuit,k} \right\}_{k \geq 0}$,
$\left\{ W_{\blacksquare,k} \right\}_{k \geq 0}$ such that
\begin{align} \label{eq.W.ergodic.strategy}
\left\{
\begin{array}{ll}
\quad & \lim_{d \to \infty} \; \Prob{W^{(d)}_{\clubsuit,k}=W^{(d)}_{k} \; : \; 0 \leq k \leq T^{(d)} } \; = \; 1, \\
\quad &\left( W^{(d)}_{\clubsuit,k}=W^{(d)}_{k} \; : \; 0 \leq k \leq T^{(d)} \right) \; {\overset{\mathrm{law}}{=}} \; 
\left( W^{(d)}_{\spadesuit,k}=W^{(d)}_{k} \; : \; 0 \leq k \leq T^{(d)} \right),\\
\quad & \lim_{d \to \infty} \; \Prob{W^{(d)}_{\spadesuit,k}=W_{\blacksquare,k} \; : \; 0 \leq k \leq T^{(d)} } \; = \; 1,\\
\quad &
\textrm{$\left\{ W_{\blacksquare,k} \right\}_{k \geq 0}$ is a Markov chain that is ergodic with respect to $\pi_W$}.
\end{array}
\right.
\end{align}
Once \eqref{eq.W.ergodic.strategy} is proved, Lemma \ref{lem.averaging} immediately follows. Let us now defines these three processes and verify that Equation \eqref{eq.W.ergodic.strategy} holds. To do so, let us consider i.i.d sequences $\{X_i\}_{i \geq 1}$ and $\{W^*_i\}_{i \geq 1}$ and $\{Z_{i,k}\}_{i,k \geq 1}$ and $\{U_k\}_{k \geq 0}$ respectively marginally distributed as $\pi$ and $\pi_{W^*}$ and $\Normal{0,1}$ and $\textrm{Uniform}([0,1])$. We consider $\{x_i\}_{i \geq 1}$ a realisation of $\{X_i\}_{i \geq 1}$ and for any index $d \geq 1$ we set $\bmX^{(d)}_0 = (x_1, \ldots, x_d)$ and $W^{(d)}_0 \dist \pi_W$ and recursively define $\left(\bmX^{(d)}_{k+1}, W^{(d)}_{k+1}\right) = \left(\bmX^{(d),*}_k,W^{*}_k\right)$, with $\bmX^{(d),*}_k = \bmX^{(d)}_k + (\mu/I) \, \delta^{1/2} \, \bmZ^{(d)}_k$ and $\bmZ^{(d)}_k=(Z_{1,k}, \ldots, Z_{d,k})$, if
\begin{align} \label{eq.event.W}
U_k \leq 
F\left( Q^{(d)}_{\Delta,k}  + S^{(d)}_{\Delta,k} \right) \times F\left( W^*_k - W^{(d)}_{k}- S^{(d)}_{\Delta,k}\right)
\end{align}
and  $\left( \bmX^{(d)}_{k+1}, W^{(d)}_{k+1}\right) = \left(\bmX^{(d)}_{k}, W^{(d)}_{k} \right)$ otherwise. In the above
\begin{align*}
\left\{
\begin{array}{ll}
Q^{(d)}_{\Delta,k}
&= \sum_{i=1}^d \ell\left( X^{(d),*}_{k,i} \right) - \ell\left( X^{(d)}_{k,i} \right)\\
S^{(d)}_{\Delta,k}
&= \sum_{i=1}^d \SS\left( X^{(d),*}_{k,i}, \gamma_i \right) - \SS\left( X^{(d)}_{k,i}, \gamma_i \right).
\end{array}
\right.
\end{align*}
Indeed, for any index $d \geq 1$ the process $\left\{ \left( \bmX^{(d)}_{k}, W^{(d)}_{k} \right) \right\}_{k \geq 0}$ is a DAPsMRWM Markov chain that targets $\pi^{(d)} \otimes \pi_W$. Let us now define the processes $W_{\clubsuit}$,$W_{\spadesuit}$,$W_{\blacksquare}$.
\begin{itemize}
%
%
\item We set $W^{(d)}_{\clubsuit,0} = W^{(d)}_0$ and recursively define $W^{(d)}_{\clubsuit,k+1} = W^{*}_{k}$ if
\begin{align} \label{eq.event.W.clubsuit}
U_k \leq F\left( Q^{(d)}_{\clubsuit,\Delta,k}  + S^{(d)}_{\clubsuit,\Delta,k} \right) \times F\left( W^*_k - W^{(d)}_{\clubsuit,k}- S^{(d)}_{\clubsuit,\Delta,k}\right)
\end{align}
and $W^{(d)}_{\clubsuit,k+1} = W^{(d)}_{\clubsuit,k}$ otherwise; we have used the notations
\begin{align*}
\left\{
\begin{array}{ll}
Q^{(d)}_{\clubsuit,\Delta,k}
&= 
(\mu \delta / I) \, \sum_{i=1}^d \ell'(x_i) Z_{i,k}
+
(\mu^2 \delta^2 / 2 \, I^2) \, \sum_{i=1}^d \ell''(x_i) \\
S^{(d)}_{\clubsuit,\Delta,k}
&= 
(\mu \delta / I) \, \sum_{i=1}^d \mathcal{S}'(x_i, \gamma_i) Z_{i,k}
+
(\mu^2 \delta^2 / 2 \, I^2) \, \sum_{i=1}^d \mathcal{S}''(x_i, \gamma_i).
\end{array}
\right.
\end{align*}
%
%
\item Similarly, we set $W^{(d)}_{\spadesuit,0} = W^{(d)}_0$ and recursively define $W^{(d)}_{\spadesuit,k+1} = W^{*}_{k}$ if
\begin{align} \label{eq.event.W.spadesuit}
U_k \leq F\left( Q^{(d)}_{\spadesuit,\Delta,k}  + S^{(d)}_{\spadesuit,\Delta,k} \right) \times F\left( W^*_k - W^{(d)}_{\spadesuit,k}- S^{(d)}_{\spadesuit,\Delta,k}\right)
\end{align}
and $W^{(d)}_{\spadesuit,k+1} = W^{(d)}_{\spadesuit,k}$ otherwise; we have used the notations $\left( Q^{(d)}_{\spadesuit,\Delta,k}, S^{(d)}_{\spadesuit,\Delta,k}\right)$ to designate a Gaussian random variable in $\RR^2$, independent from any other source of randomness, with same law as $\left( Q^{(d)}_{\clubsuit,\Delta,k}, S^{(d)}_{\clubsuit,\Delta,k}\right)$.
%
%
\item Finally, we set $W^{(d)}_{\blacksquare,0} = W^{(d)}_0$ and recursively define $W^{(d)}_{\blacksquare,k+1} = W^{*}_{k}$ if
\begin{align} \label{eq.event.W.infty}
U_k \leq F\left( Q^{(\infty)}_{\Delta,k}  + S^{(\infty)}_{\Delta,k} \right) \times F\left( W^*_k - W^{(d)}_{\blacksquare,k}- S^{(\infty)}_{\Delta,k} \right)
\end{align}
and $W^{(d)}_{\blacksquare,k+1} = W^{(d)}_{\blacksquare,k}$ otherwise; in the above $\left\{ \left( Q^{(\infty)}_{\Delta,k}, S^{(\infty)}_{\Delta,k} \right) \right\}_{k \geq 0}$ is an i.i.d sequence marginally distributed as $\left( Q^{(\infty)}_{\Delta}, S^{(\infty)}_{\Delta} \right)$; see Lemma \ref{lem.pivotal}.
\end{itemize}
It is obvious that $\left\{ W^{(d)}_{\clubsuit,k} \right\}_{k \geq 0}$ and $\left\{ W^{(d)}_{\spadesuit,k} \right\}_{k \geq 0}$ have the same law. The fact that $\left\{ W^{(d)}_{\blacksquare,k} \right\}_{k \geq 0}$ is a Markov chain ergodic with respect to $\pi_W$ readily follows from the fact that it is reversible with respect to $\pi_W$; it is a standard Gaussian computation. The proof of the first and third equation in \eqref{eq.W.ergodic.strategy} is based on the following basic remark. For convenience, let us denote by $\mathcal{E}^{(d)}_k$,$\mathcal{E}^{(d)}_{k, \clubsuit}$,$\mathcal{E}^{(d)}_{k, \spadesuit}$,$\mathcal{E}^{(d)}_{k, \infty}$ the Bernoulli random variables indicating whether or not the respective events \eqref{eq.event.W},\eqref{eq.event.W.clubsuit},\eqref{eq.event.W.spadesuit}, \eqref{eq.event.W.infty} are realised or not. We have
\begin{align} \label{eq.union.bound}
1-\Prob{W^{(d)}_{\clubsuit,k}=W^{(d)}_{k} \; : \; 0 \leq k \leq T^{(d)} }
&\leq \sum_{k=0}^{T^{(d)}-1} \Prob{ \left. \mathcal{E}^{(d)}_k \neq \mathcal{E}^{(d)}_{k, \clubsuit} \right| W^{(d)}_{\clubsuit,k}=W^{(d)}_{k}}
\end{align}
and the conditional probability $\Prob{ \left. \mathcal{E}^{(d)}_k \neq \mathcal{E}^{(d)}_{k, \clubsuit} \right| W^{(d)}_{\clubsuit,k}=W^{(d)}_{k}}$ is less than the expectation, conditioned upon the event $\left\{ W^{(d)}_{\clubsuit,k}=W^{(d)}_{k} \right\}$, of the absolute difference
\begin{align} \label{eq.absolute.difference}
\left| F\left( Q^{(d)}_{\Delta,k}  + S^{(d)}_{\Delta,k} \right) \, F\left( W^*_k - W^{(d)}_{k}- S^{(d)}_{\Delta,k}\right)
-
F\left( Q^{(d)}_{\clubsuit,\Delta,k}  + S^{(d)}_{\clubsuit,\Delta,k} \right) \, F\left( W^*_k - W^{(d)}_{\clubsuit,k}- S^{(d)}_{\clubsuit,\Delta,k}\right) 
\right|.
\end{align}
Because the $[0,1]$-valued function $F$ is assumed to be Lipschitz, if $W^{(d)}_{\clubsuit,k}=W^{(d)}_{k}$ the absolute difference in \eqref{eq.absolute.difference}
is less than $2 \times \|F\|_{\textrm{Lip}} \times \left\{ \left|Q^{(d)}_{\Delta,k} - Q^{(d)}_{\clubsuit,\Delta,k} \right| + \left| S^{(d)}_{\Delta,k} - S^{(d)}_{\clubsuit,\Delta,k}\right| \right\}$.
Because the second and third derivatives of the log-likelihood function $\ell$ are globally bounded, a third order Taylor expansion yield that
\begin{align*}
\EE&\left| Q^{(d)}_{\Delta,k} - Q^{(d)}_{\clubsuit,\Delta,k} \right|
\lesssim
d^{-1/2} \, \EE \left| \sum_{i=1}^d \left( \ell'(X^{(d)}_{k,i}) - \ell'(x_i) \right) \, Z_{i,k} \right|
+
d^{-1} \, \EE \left| \sum_{i=1}^d \left( \ell''(X^{(d)}_{k,i}) - \ell''(x_i) \right) \, Z^2_{i,k} \right|
+ \OO(d^{-1/2})\\
&\lesssim
d^{-1/2} \, \left\{ \sum_{i=1}^d \Expect{ \left( \ell'(X^{(d)}_{k,i}) - \ell'(x_i) \right)^2 } \right\}^{1/2}
+
d^{-1} \, \left\{ \sum_{i=1}^d \Expect{ \left( \ell''(X^{(d)}_{k,i}) - \ell''(x_i) \right)^2 } \right\}^{1/2}
+ \OO(d^{-1/2})\\
&= \OO(k \, d^{-1/2}).
\end{align*}
We have used the fact that for any exponent $p \geq 1$ we have $\Expect{ \left| X^{(d)}_{k,i} - x_i\right|^p}^{1/p} \lesssim k \, d^{-1/2}$, which readily follows from the triangular inequality. Similarly, we have that
$\EE\left| S^{(d)}_{\Delta,k} - S^{(d)}_{\clubsuit,\Delta,k} \right| \lesssim k \, d^{-1/2}$. Plugging these estimates in \eqref{eq.union.bound} shows that
\begin{align*}
1-\Prob{W^{(d)}_{\clubsuit,k}=W^{(d)}_{k} \; : \; 0 \leq k \leq T^{(d)} }
\lesssim
d^{-1/2} \, \sum_{k=0}^{T^{(d)}-1} k \; \to \; 0
\end{align*}
since $T^{(d)} = d^{\gamma}$ for some exponent $\gamma \in (0, 1/4)$; we have thus proved that $\Prob{W^{(d)}_{\clubsuit,k}=W^{(d)}_{k} \; : \; 0 \leq k \leq T^{(d)} }$ converges to one as $d \to \infty$. The proof of the estimate $\Prob{W^{(d)}_{\spadesuit,k}=W^{(d)}_{\blacksquare,k} \; : \; 0 \leq k \leq T^{(d)} } \to 1$ uses the same ingredients and is thus omitted.

\section{Pseudo-marginal estimate at Stage 1}
\label{app.pmstageOne}
Throughout the main article we have assumed that $\pi_a(x)$ is a computationally cheap, \emph{deterministic} approximation to $\pi(x)$. However, in the DAPsMRWM setting, where only a noisy but unbiased non-negative estimator, $\pihat(x,U)$, of $\pi(x)$ is available one might have a cheaper, noisy, non-negative unbiased estimator, $\pihat_a(x,U_a)$ to use in Stage One. Indeed, $\pihat_a(x,U_a)$ could even be biased, and the delayed-acceptance algorithm would still be valid \cite[e.g., see ][]{GolightlyHendersonSherlock:2013}. Here, we provide an equivalent to Lemma 4.1 and Proposition 4.1 for this general scenario, and then investigate the use of a noisy, unbiased estimator at Stage One; this suggests an upper bound on the efficiency achievable in the case of a noisy, biased estimator.

Analogously to Section 3.3, we may write
\[
\pihat_a^{(d)}(x,u_a)=\pi_a^{(d)}(x)e^{w_a}.
\]
Assumptions 1 and 2 lead to distributions $W_a^*\sim N(-\sigma^2_a/2,\sigma_a^2)$ for the noise in the density at the proposed value, and
$W_a\sim N(\sigma^2_a/2,\sigma_a^2)$ for the noise in the density at the current value. Similarly, Assumptions 3 gives the compute time for Stage One as being proportional to $\sigma^{-2}_a$.

Section \ref{sec.PMPM.proof.sketch} sketches the proof, by steps analogous to those in Section 4, that the following limits exist:
 $\alpha_1(\mu,\sigma_a^2;\beta_1,\beta_2)=\lim_{d\rightarrow \infty} \alpha_1^{(d)}(X^{(d)},W_a^{(d)})$ and $\alpha_{12}(\mu,\sigma^2,\sigma_a^2;\beta_1,\beta_2)=\lim_{d\rightarrow \infty} \alpha_{12}(X^{(d)},W_a^{(d)},W^{(d)})$.  We do not pursue a complete analysis of the joint tuning of three parameters over the range of possible values of $\beta_1$ and $\beta_2$; instead we investigate the limit as $\beta_1\rightarrow 0$ and $\beta_2\downarrow 0$. This describes the behaviour as the bias in the cheap approximation approaches $0$, and might provide an approximate upper bound on the possible improvements achievable when the Stage One approximation is, in fact, biased. In this limit may obtain the following values, analogous to the expressions in Appendix \ref{sec.explicit.alphas} (where $G$ is defined):
\begin{align}
  \label{eqn.PMPM.a}
  \alpha_1(\mu,\sigma_a^2;0,0)
  &=
  G\left(-\frac{1}{2}\mu^2-\sigma_a^2,\mu^2+2\sigma_a^2\right)\\
    \label{eqn.PMPM.aa}
  \alpha_{12}(\mu,\sigma_a^2,\sigma^2;0,0)
  &=
  \Expect{G\left(-\frac{1}{2}\mu^2-\sigma_a^2+\sqrt{2}\sigma_a\xi,\mu^2\right)
  G\left(-\sigma^2+\sigma_a^2-\sqrt{2}\sigma_a\xi,2\sigma^2\right)},
\end{align}
where $\xi\sim \mathsf{N}(0,1)$.
An analogous proof to that of Proposition 4.3 then gives
$\lim_{d\rightarrow \infty} ESJD^{(d)}\propto\alpha_{12}\mu^2$.

For the DAPsRWM, $\eta$ was the relative computational cost of the determistic approximation to the pseudo marginal approximation when $\sigma^2=1$. When the approximation at Stage One is also random we define $\eta$ to be the relative computational cost of $\pihat_a$ when $\sigma_a^2=1$ to the cost of $\pihat$  when $\sigma^2=1$. Thus, following Section 5.2, the total computational cost is proportional to $\eta/\sigma_a^2 + \alpha_1/\sigma^2$, and we obtain a limiting algorithm efficiency of
\begin{align}
  \label{eqn.effdapmpm}
  \eff_{\mathrm{dapmpm}}(\mu,\sigma^2,\sigma^2)
  &=
  \mu^2 \frac{\alpha_{12}(\mu,\sigma^2,\sigma_a^2)}
  {\eta/\sigma_a^2 + \alpha_1(\mu,\sigma_a^2)/\sigma^2}.
\end{align}

With $\eta=0.1$ the optimal efficiency in \eqref{eqn.effdapmpm} is only a factor of $\approx 2.1$ better than standard pseudo-marginal MCMC and we conclude that, as with the DAPsRWM algorithm (see Section 5.2), when only an order of magnitude cheaper, $\pihat_a$ is of dubious utility. Finally, at $\eta=0.01$ the optimal efficiency factor is $\approx 6.8$ and the algorithm could be worth implementing provided that the bias is small. In \cite{GolightlyHendersonSherlock:2013}, a biased, noisy estimator at Stage One (with $\eta\approx 0.1$) is found to lead to an efficiency of at most double that of the PsMRWM, agreeing with the above. In contrast, for the same target, a deterministic approximation at Stage One (with $\eta\approx 1/300$) leads to an efficiency increase of over an order of magnitude relative to the PsMRWM.

In \cite{GolightlyHendersonSherlock:2013} it is also suggested (though not implemented) that the noise in $\pihat_a$ could be made positively correlated with the noise in $\pihat$ since this should increase the Stage Two acceptance probability on average. For example, in particle MCMC-based inference on the parameters of an SDE, $\pihat_a$ could use a coarser Euler-Maruyama time step than $\pihat$ and both could use the same driving Brownian motion. Use of the same Brownian motion would produce the highest correlation possible between the two estimators; however, the particle-filter resampling steps would ensure this was below $1$.

The article \cite{berard2013lognormal} considers the output from a single particle filter, rather than the joint output from two correlated particle filters, and it is beyond the scope of this work to rigorously generalise this result. If, however, the joint noise vector $(W_{a,\Delta}^\infty,W_\Delta^{\infty})$ in Proposition \ref{prop.equiv.lem.pivotal} has a bivariate Gaussian distribution with a correlation of $\rho$ then it is straightforward to generalise     \eqref{eqn.PMPM.aa} and, for a given $\rho$, to optimise the efficiency with respect to $(\mu,\sigma_a^2,\sigma^2)$. Figure \ref{fig.dapspsrwmvsrho} plots the optimal $(\mu,\sigma_a^2,\sigma^2)$ and the efficiency relative to optimally tuned PsMRWM, all against the correlation, $\rho$, and under the assumption of a bivariate Gaussian density.

\begin{figure}[h]
\begin{center}
  \includegraphics[width = 0.8 \textwidth]{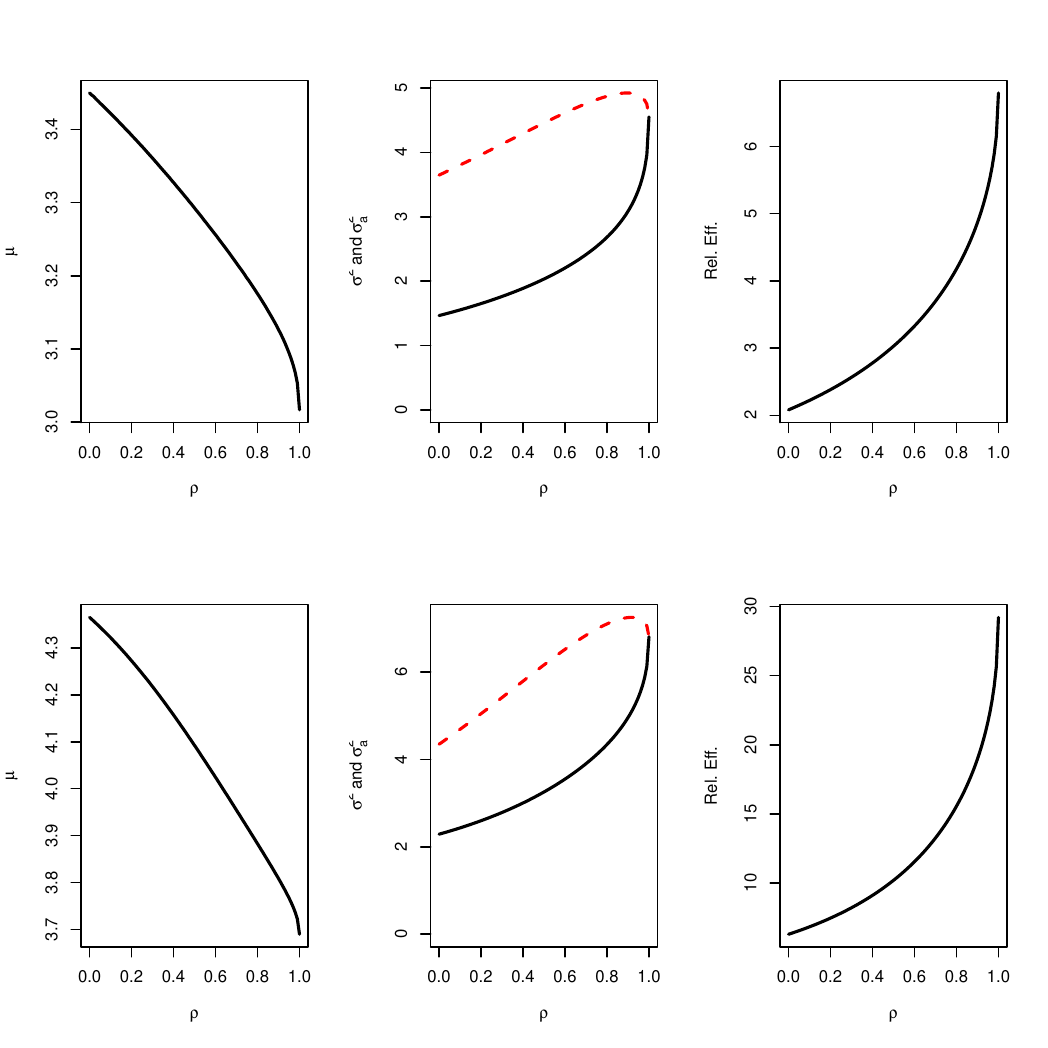}
\caption{
Optimal values for $\mu$ (left) and $\sigma_a^2$ (red dashed line) and $\sigma^2$ (solid black line) (centre), as well as the efficiency relative to optimised pseudo-marginal RWM (right), when $\eta=0.1$ (top) and $\eta=0.01$ (bottom). Plots created under the assumption of a bivariate Gaussian density for $(W_{a,\Delta}^\infty,W_\Delta^{\infty})$.
\label{fig.dapspsrwmvsrho}
}
\end{center}
\end{figure}

As anticipated, the optimal efficiency increases with $\rho$.
Also unsurprisingly, $\sigma_a^2\ge \sigma^2$; further, the optimal variance tends to increase with $\rho$ because a large positive correlation reduces the difference $W_\Delta^\infty -W_{a,\Delta}^\infty$. However, when $\rho=1$, $\widehat{\sigma}^2_a=\widehat{\sigma}^2$ precisely so that (in this limit where $\beta_1=\beta_2=0$) $W_\Delta^\infty -W_{a,\Delta}^\infty=0$ and acceptance is guaranteed at Stage Two. In this case the algorithm becomes PsRWM on the unbiased $\widehat{\pi}_a$, but the cost is $(\eta+\alpha_1)/\sigma^2$ instead of $1/\sigma^2$, potentially leading to a very large improvement in efficiency. In reality, when $\pihat_a$ is biased, but the bias is small, strong positive correlation in the noises of the two approximations should, therefore, lead to substantial improvements in efficiency.

\subsection{Steps in the derivation of \eqref{eqn.PMPM.a} and \eqref{eqn.PMPM.aa}}
\label{sec.PMPM.proof.sketch}
The derivation of \eqref{eqn.PMPM.a} and \eqref{eqn.PMPM.aa} follows analogous steps to those in Section 4 of the main text. We describe the intermediate results; the proofs are either special cases of or slight variations of the proofs of the equivalent steps for the results in the main text and are omitted.

Lemma 4.1 is still applicable, since $(q_{\Delta}^{(d)}(x^{(d)},X^{(d),*}),s_{\Delta}^{(d)}(x^{(d)},X^{(d),*}))$ and  $W_a^{(d),*}-W_a^{(d)}$ are independent; we include the joint limit with $W_a^{(d),*}-W_a^{(d)}$ and $W^{(d),*}-W^{(d)}$ to aid with the exposition on correlated noise.

\begin{prop} \label{prop.equiv.lem.pivotal}
Let Assumptions 1 and 2 hold for both the noise in the Stage One approximation and the noise in the Stage Two approximation, with variances of $\sigma_a^2$ and $\sigma^2$, respectively. Further, let Part 1 of Assumptions 4 hold.
Let $\{x_i\}_{i \geq 1}$ be the realisation of an i.i.d sequence marginally distributed as $\pi$. For $d \geq 1$, set $\bmx^{(d)} = (x_1, \ldots, x_d) \in \RR^d$ and let $\bmX^{(d),*}$ and $\bmZ^{(d)}$ be as defined in (4.1).
For almost all realisations $\{x_i\}_{i \geq 1}$, the following limit 
\begin{align}
\label{eq.limiting.Q.Wa.W}
\lim_{d \to \infty} \;
  \begin{bmatrix}
    q^{(d)}_{\Delta}(\bmx^{(d)}, \bmX^{(d),*})\\
    s^{(d)}_{\Delta}(\bmx^{(d)}, \bmX^{(d),*})\\
    W_a^{(d),*}-W_a^{(d)}\\
    W^{(d),*}-W^{(d)}
  \end{bmatrix}
\;=\;
  \begin{bmatrix}
    Q^{\infty}_{\Delta}\\
    S^{\infty}_{\Delta}\\
    W^{\infty}_{a,\Delta}\\
    W^{\infty}_{\Delta}\\
  \end{bmatrix}
\dist \Normal{
  \begin{bmatrix}
    -\frac{1}{2}\mu^2\\
    \frac{1}{2}\mu^2\beta_1\\
  -\sigma^2_a\\
  -\sigma^2
  \end{bmatrix}
  ,
  \begin{bmatrix}
  \mu^2&-\mu^2\beta_1&0&0\\
  -\mu^2\beta_1&\mu^2\beta_2^2&0&0\\
  0&0&2\sigma_a^2&0\\
  0&0&0&2\sigma^2
  \end{bmatrix}
}
\end{align}
holds in distribution.
\end{prop}

Analogously to Proposition 4.1 (but, for simplicity of presentation, taking Assumption 2 as well) we then obtain the limiting acceptance probabilities as follows.

\begin{prop} \label{prop.pmpm.limi.accept.proba}
Let Assumptions 1 and 2 hold both for the Stage One approximation and the Stage Two approximation, with variances of $\sigma^2_a$ and $\sigma^2$ respectively. Let Part 1 of Assumptions 4 hold.
Then
\begin{align*}
\lim_{d \to \infty} \; \accado{\bmX^{(d)},W_a^{(d)}} \; \stackrel{L^2}{=} \; \acc_1
\quad \textrm{and} \quad
\lim_{d \to \infty}  \acc_{12}^{(d)}(\bmX^{(d)}, W_a^{(d)},W^{(d)}) \; \stackrel{L^2}{=} \; \acc_{12}
\end{align*}
where the limiting acceptance rates are given by
\begin{align}
\label{eq.limiting.rate.supp}
\left\{
\begin{aligned}
\alpha_1 
&= \Expect{ F\left( Q^{\infty}_{\Delta}+S^{\infty}_{\Delta}  + W^{\infty}_{a,\Delta} \right) } \\
\alpha_{12} 
&= \Expect{  F\left( Q^{\infty}_{\Delta}+S^{\infty}_{\Delta} + W^{\infty}_{a,\Delta}\right) \times F\left( W^\infty_{\Delta}  - S^{\infty}_{\Delta} -W^{\infty}_{a,\Delta} \right) }
\end{aligned} \right.
\end{align}
for $(Q^{\infty}_{\Delta} ,S^\infty_\Delta,W_{a,\Delta}^{\infty},W^\infty_\Delta )$ as described in \eqref{eq.limiting.Q.Wa.W}.
The dependence of $\alpha_1$ and $\acc_{12}$ upon $(\mu,\sigma_a^2,\sigma^2,\beta_1,\beta_2)$ is implicit.
\end{prop}

\section{Envelope width}
\label{app.envelopeWidth}
Figures \ref{fig.da.optimal.lambda} (left) and \ref{fig.musigma2scatterpm.app} provide a look-up, given $\eta$ and $\alpha_{2|1}$ at the optimal parameter setting, of the ratio of the optimal tuning parameter (scaling for DARWM, scaling and variance for DAPMRWM) for the DA algorithm to the optimal value for the parent algorithm. However, in places the envelope of possible values is relatively wide. Here, we investigate this envelope.

Figure \ref{fig.envelopeplots} shows the plots in Figures \ref{fig.da.optimal.lambda} (left) and \ref{fig.musigma2scatterpm.app}, but specifically for $\eta=0.01$, and with points coloured according to $\beta_2$. The exception is the top-right plot, which repeats Figure \ref{fig.da.optimal.lambda} (left) but coloured according to theoretical efficiency.

\begin{figure}
\begin{center}
\subfigure{
  \includegraphics[width=0.4 \textwidth,angle=0]{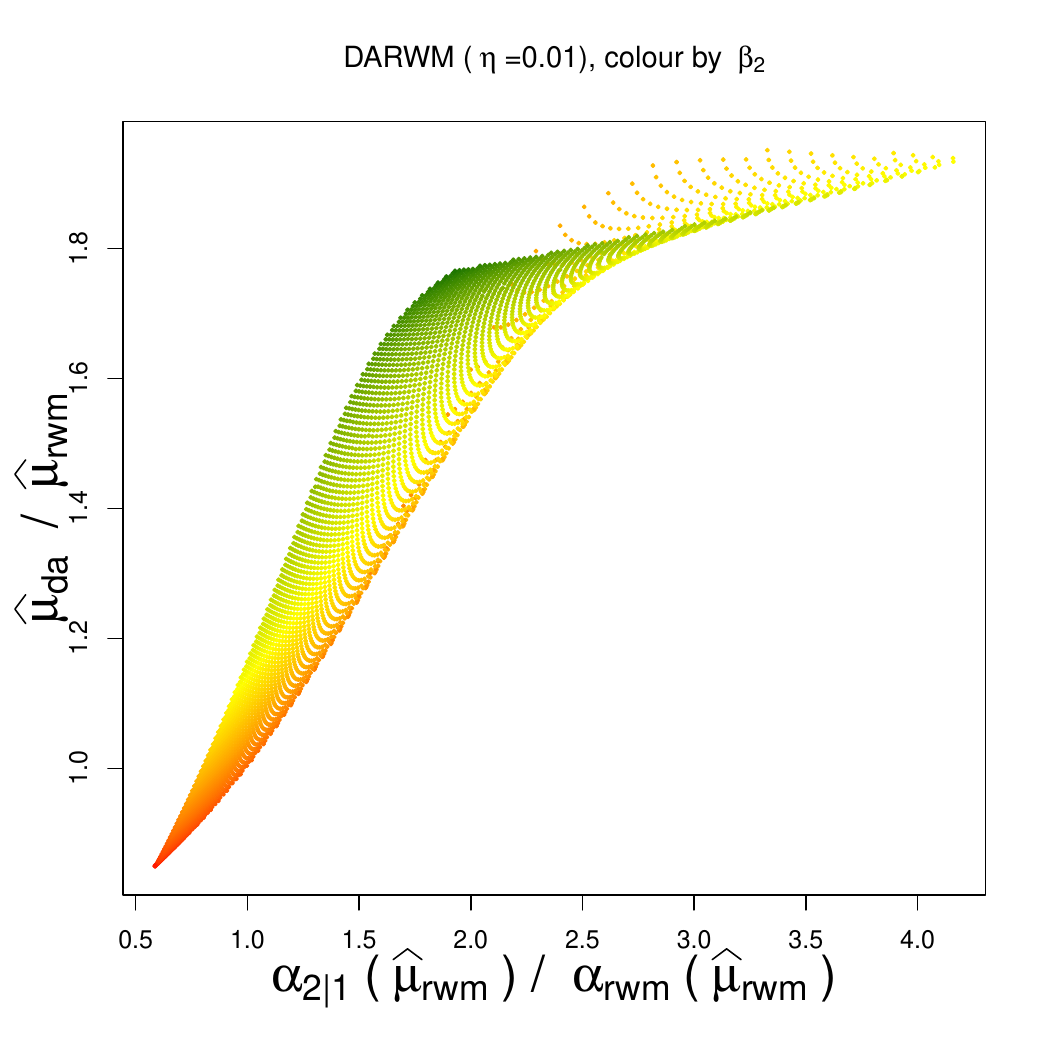}
}
\subfigure{
  \includegraphics[width=0.4 \textwidth,angle=0]{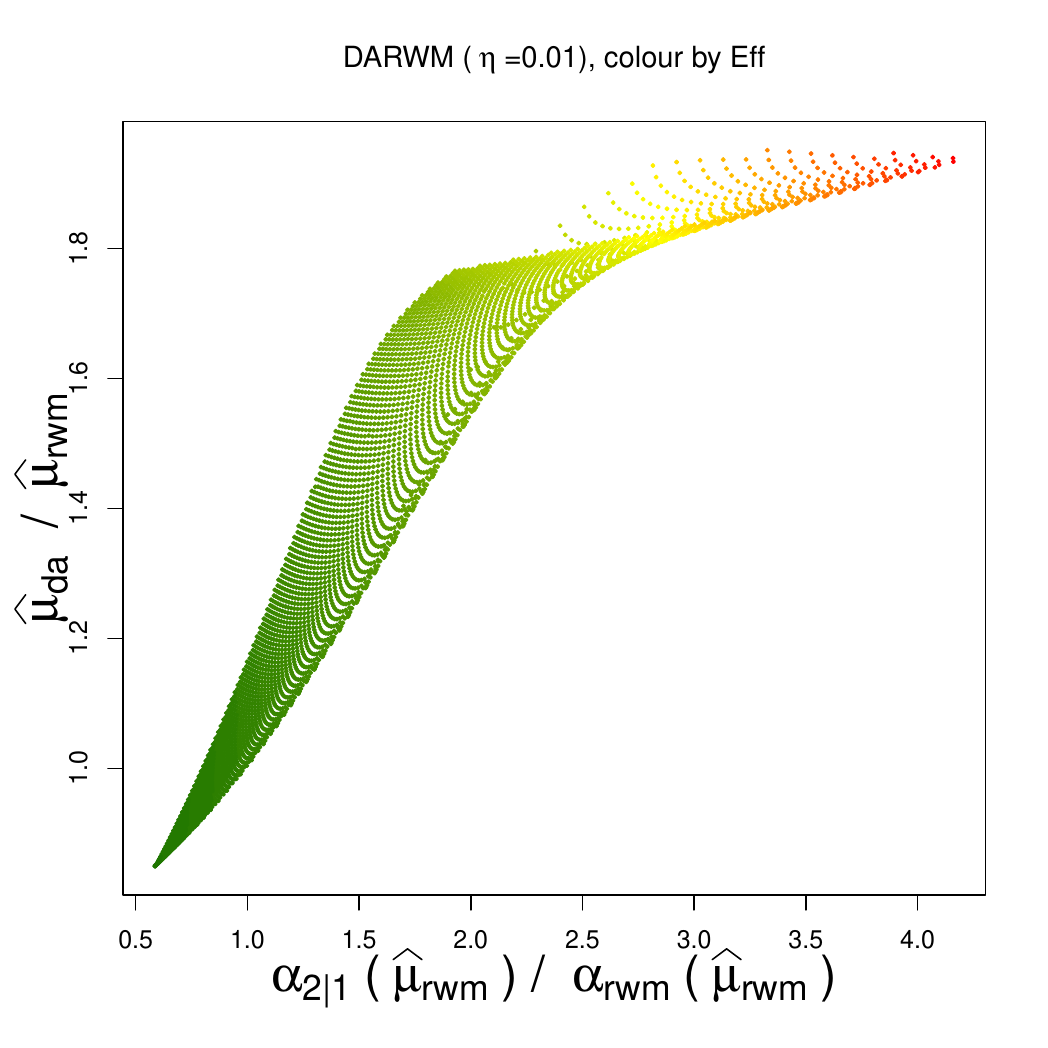}
}
\subfigure{
  \includegraphics[width=0.4 \textwidth,angle=0]{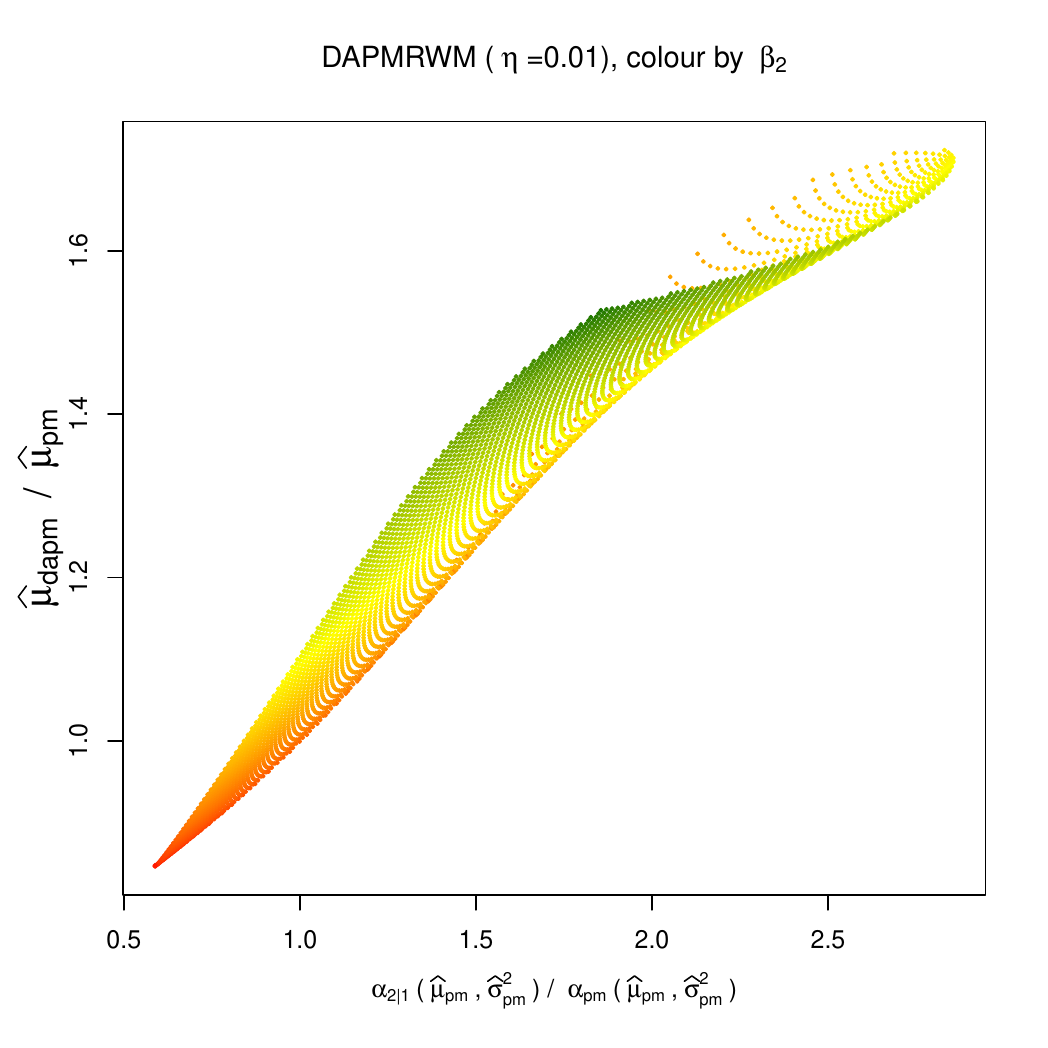}
}
\subfigure{
  \includegraphics[width=0.4 \textwidth,angle=0]{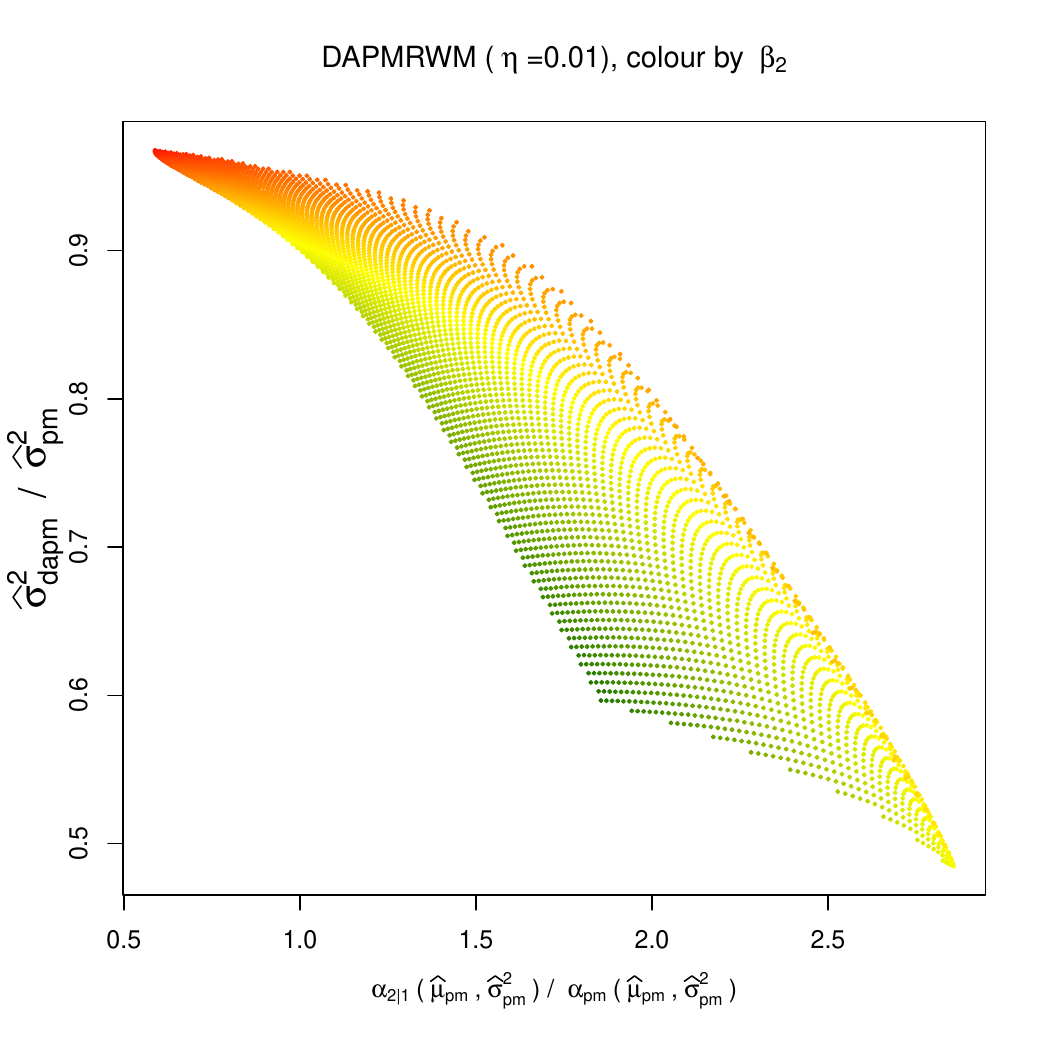}
}
\caption{
Scatter plots of (top) $\muhat_{\textrm{darwm}}/\muhat_{\textrm{rwm}}$, (bottom left) $\muhat_{\textrm{dapm}}/\muhat_{\textrm{pm}}$ and  (bottom right) $\sigmahat^2_{\textrm{dapm}}/\sigmahat^2_{\textrm{pm}}$ (right) all against $\alpha_{2|1}(\muhat_{\textrm{pm}},\sigmahat^2_{\textrm{pm}})$. Colour is according to $\beta_2$ except for the top right plot where colour is by theoretical efficiency; lowest values appear in dark green and highest values appear in red. All plots are for $\eta=0.01$.
\label{fig.envelopeplots}
}
\end{center}
\end{figure}

Firstly we examine the choice of scaling. The theoretical efficiency curve shows clearly that for the DARWM with $\eta=0.01$, moderate to large efficiencies are only obtained when $\alpha_{2|1}$ is large. This pattern is repeated, though not shown here, across different $\eta$ values and for PMRWM (where for $\eta=0.01$, the `yellow' region, corresponding to between 1/3 and 2/3 of the maximum value, is at an x axis value of $2.0$ rather than $3.0$). When $\alpha_{2|1}$ is large the envelope for $\widehat{\mu}$ is narrow. Thus for the more successful DA strategies there is little uncertainty in the recommended choice of scaling.

For large $\alpha_{2|1}$, the fine spray of points above the main line correspond to scenarios where $\beta_1\approx \beta_2$. Recall from Section \ref{sec.deter.approx} that $|\beta_1|\le \beta_2$, and notice that $\beta_1=\beta_2$ corresponds to the case where, in deriving the bound, the Cauchy-Schwarz inequality is exact and so the gradient of the random function, $S$ that is the error in the approximation, \emph{is proportional to} the gradient of $\log \pi$, with a positive coefficient of proportionality; the random function from which our approximation derives is, in fact, deterministic. This is counter to the generality of our set up of using a realisation from a \emph{random} function, and, moreover, corresponds to an approximation which matches the  mode(s) of the target and has steeper gradients everywhere else, which would not be sensible. We, therefore, recommend ignoring this fine spray of points (indeed we have already removed points with $|\beta_1|/\beta_2>0.9$) and using the more solid part of the envelope.

The top-left and bottom-left panels show that amongst the less efficienct DA strategies, the more efficient of these occur nearer the top of the envelope. With a chicken-and-egg assumption that one would not be using a DA approximation unless it was at least reasonably efficient, we would suggest choose a value towards the top of the envelope.

In contrast to the behaviour for the scaling parameter, for relatively large $\alpha_{2|1}$, the range of possible values for $\sigma^2$ is large, and all except the largest values correspond to the lower $\beta_2$ values. Interestingly, in the example in Section \ref{sect.sim} the curve for efficiency as a function of $\sigma^2$ has a  flat peak which covers roughly a doubling of the variance. Thus we conjecture that choosing any value within the envelope will lead to a close-to-optimal algorithm.

\section{DARWM: Gaussian target with logistic approximation}
\label{app.DARWMbetas}

We consider a scenario where the true target is a product of standard Gaussians
and the deterministic approximation is a product
of logistic densities with a mode at $\phi_1$ and inverse-scale parameter $\phi_2$,
\begin{align}
\pi(x) \propto \exp\curBK{-\frac{1}{2}\sum_{i=1}^dx_i^2}
\qquad \textrm{and} \qquad
\widehat{\pi}_a(x)\propto\prod_{i=1}^{d}\frac{e^{\phi_2 (x_i-\phi_1)}}{\left(1+e^{\phi_2 (x_i-\phi_1)}\right)^2}.
\label{eqn.simpsimstudytargets}
\end{align}
We consider fourteen scenarios: ten different combinations of values for $(\phi_1,\phi_2)$, three approximations where the values of $\phi_1$ or $\phi_2$ vary from component to component, and the `perfect approximation', $\pi_a=\pi$; see Table
\ref{table.simple.sim.stud} for further details.

\begin{table}
\begin{center}
\begin{tabular}{l|cccc|ll}
Algorithm&$\phi_1$&$\phi_2$&$\beta_1$&$\beta_2$&$\alpha_1$&$\alpha_{2|1}$\\
\hline
RWM&&&&&0.2616&\\
\hline
DA &0.0&0.6&0.834&0.834& 0.261&0.128 \\
DA &0.0&1.2&0.441&0.449&  0.069&0.533\\
DA &0.0&1.8&-0.042&0.262&0.041&0.738 \\
DA &0.0&2.3&-0.467&0.649& 0.034&0.595\\
DA &0.0&2.7&-0.810&1.025&0.032&0.492 \\
DA &0.5&1.2&0.466&0.552&0.370&0.547\\
DA &1.0&1.2&0.535&0.763&0.140&0.151  \\
DA &1.5&1.2&0.630&0.979&0.482&0.276\\
DA &0.6&1.8&0.056&0.681&0.0650&0.279 \\
DA &0.5&2.3&-0.351&0.941&0.049&0.289 \\
\hline
DA &0.0&1.5--2.0&&&0.248&0.772\\
DA &0.0&1.2--2.7&&&0.238&0.609\\
DA &0.0--1.0&1.2&&&0.377&0.517\\

\end{tabular}
\caption{Values of $\phi_1$ and $\phi_2$ used in \eqref{eqn.simpsimstudytargets}, and the corresponding values of $\beta_1$ and $\beta_2$ (where calculable), $\alpha_1(\lambdahat_{\textrm{rwm}})$ and $\alpha_{2|1}(\lambdahat_{\textrm{rwm}})$.
\label{table.simple.sim.stud}}
\end{center}
\end{table} 

Empirical effective sample sizes (ESSs) for each of the $d$ components are calculated using the \texttt{coda} package in \texttt{R} \cite{CODA}; the overall ESS is taken to be the average of the ESSs over the $d$ individual components. All algorithms were run for $10^6$ iterations.

We first obtained the optimal scaling, $\lambdahat_{\textrm{rwm}}$, for a RWM targeting $\pi$ by optimising the empirical ESS, and evaluated $\alpha_{\textrm{rwm}}(\lambdahat_{\textrm{rwm}})$ as well as the empirical ESS at this tuning. Then we ran the DA algorithm with this scaling to find $\alpha_{2|1}(\lambdahat_{\textrm{rwm}})$.
Next, we artificially induced three different values of
$\eta$: $0.1$, $0.01$, $0.001$ and evaluated the efficiency, (empirical ESS-100) /  CPU time) over a grid of possible scalings, $\lambda$, to find the optimal scaling. The regularisation penalty is needed because for very poorly mixing chains the empirical ESS tends to overestimate the true efficiency.

Figure
\ref{fig.mueffscatterwsim} reproduces Figure 1
for $\eta\in\{0.1,0.01,0.001\}$, but in three shades of grey, then plots $\lambdahat_{\textrm{da}}/\lambdahat_{\textrm{rwm}}$ ($d=10$ and $d=25$); a similar plot for the predicted and realised relative efficiency ($d=25$) against $\alpha_{2|1}(\lambdahat_{\textrm{da}})/\alpha_{\textrm{rwm}}(\lambdahat_{\textrm{rwm}})$ is also provided. At $d=10$ the theory sometimes slightly overestimates the increase in scaling that is required, although (not shown) the predicted range of gains in efficiency is accurate except when $\eta$ is small and $\alpha_{2|1}(\muhat_{\textrm{rwm}})/\alpha_{\textrm{rwm}}(\muhat_{\textrm{rwm}})$ is large, but by $d=25$ the theoretical prediction of the ratio is quite accurate, as is the predicted efficiency gain. Essentially, with a larger scaling and a smaller dimension the diffusion approximation is less accurate.

\begin{figure}
\begin{center}
\subfigure{
  \includegraphics[width = 0.3 \textwidth,angle=0]{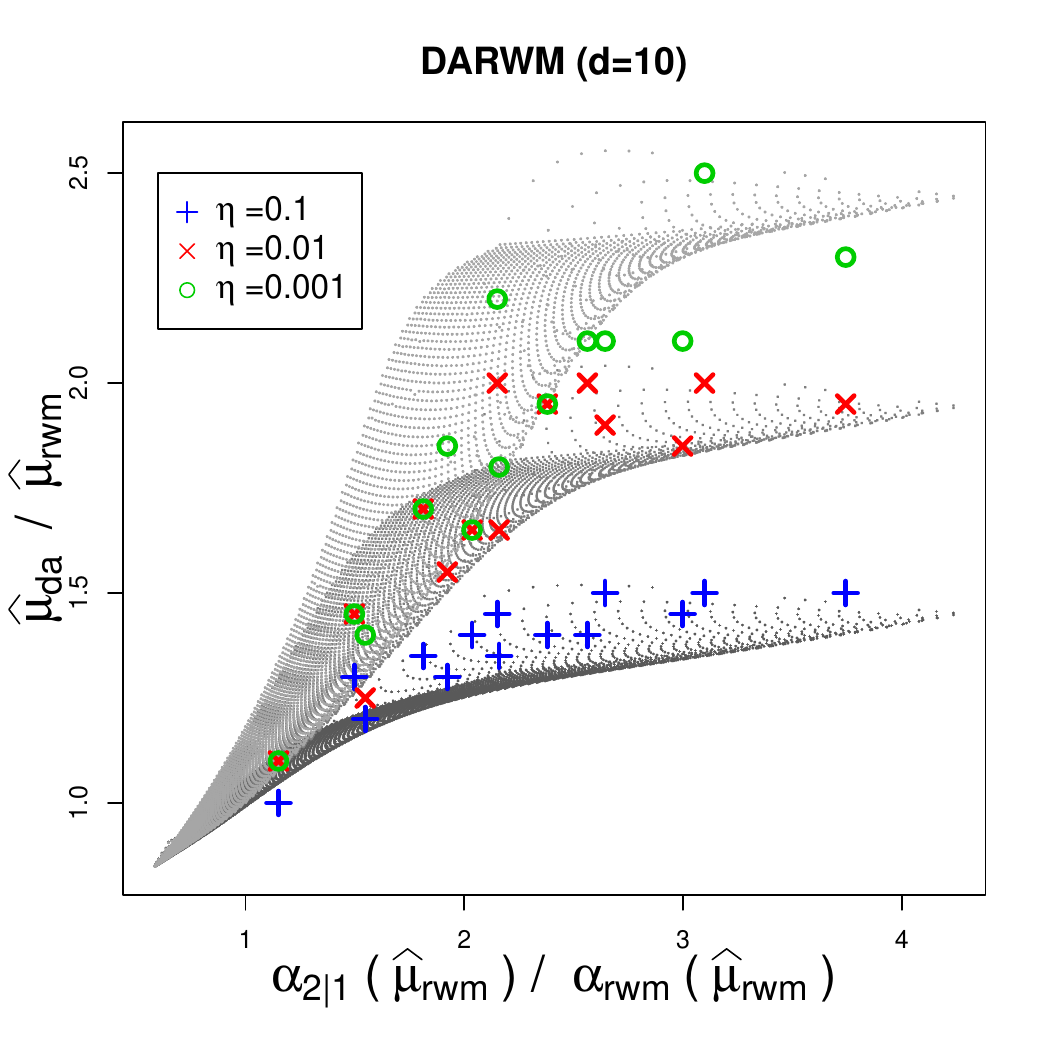}
}
\subfigure{
  \includegraphics[width = 0.3 \textwidth,angle=0]{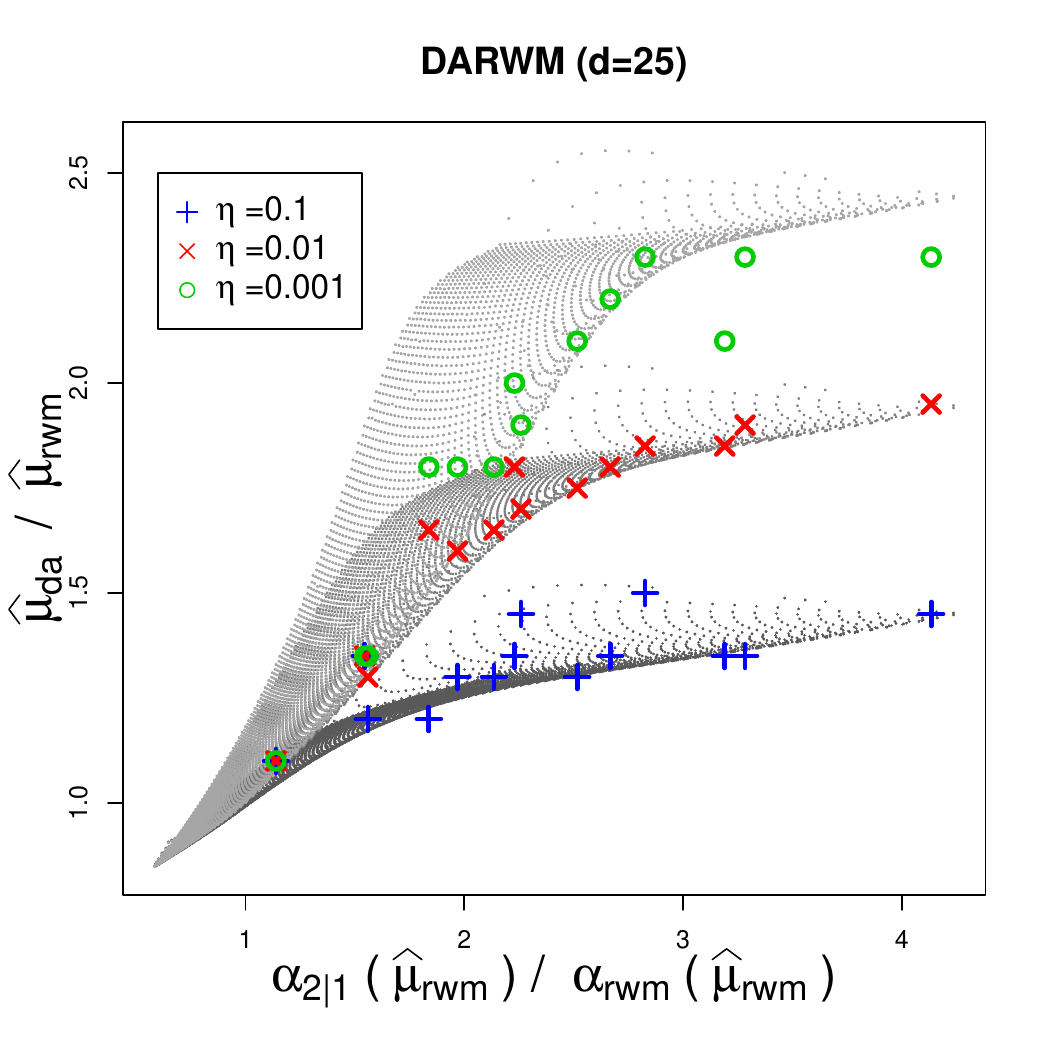}
}
\subfigure{
  \includegraphics[width = 0.3 \textwidth,angle=0]{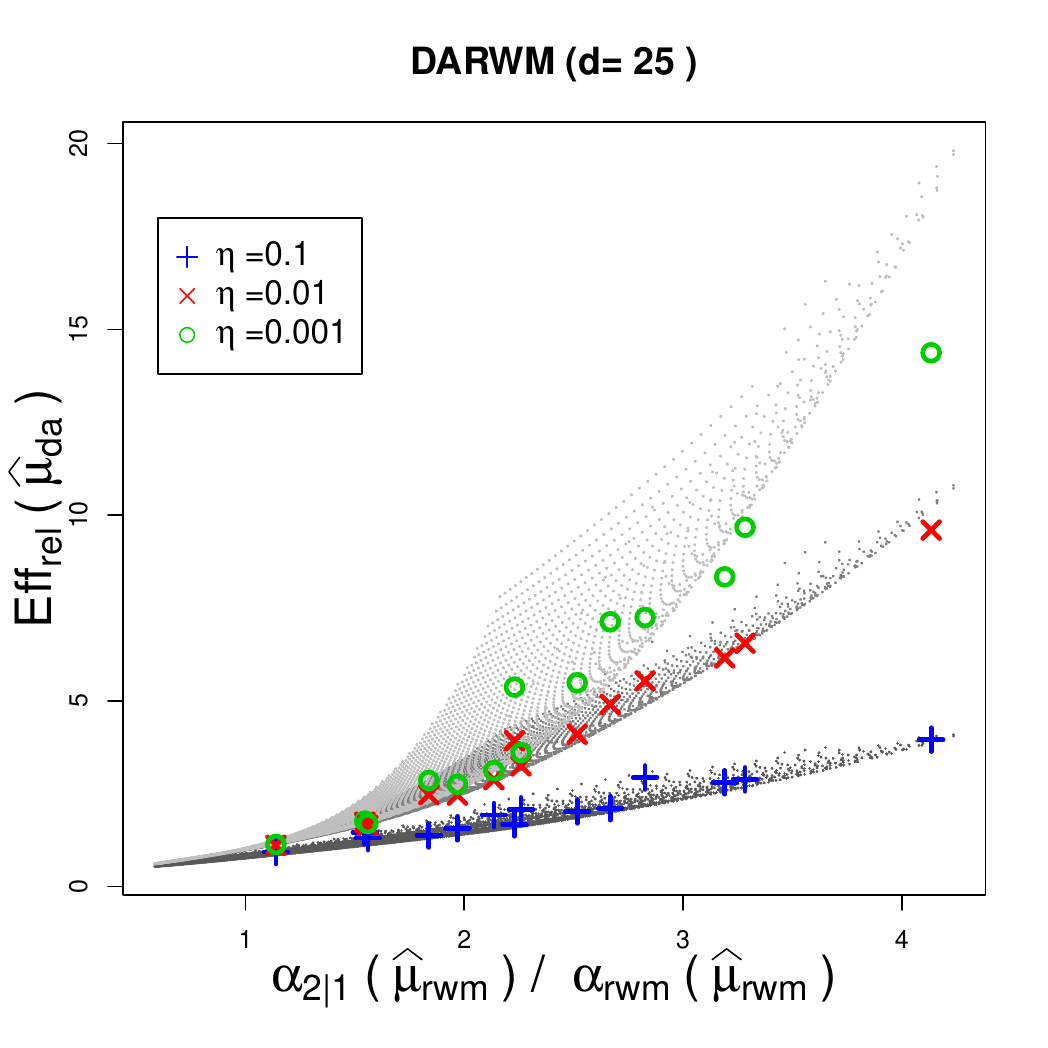}
}
\caption{
Scatter plots of $\muhat_{\textrm{da}}/\muhat_{\textrm{rwm}}$ ($d=10$, left and $d=25$, centre) and $\eff^{\textrm{rel}}_{\textrm{da}}(\muhat_{\textrm{da}})$ ($d=25$, right), vs $\alpha_{2|1}(\muhat_{\textrm{rwm}})$, partitioned by $\eta$.
\label{fig.mueffscatterwsim}
}
\end{center}
\end{figure}

\section{DAPsRWM: tuning advice and Lotka Volterra simulation study}
\label{sec.DAPsRWMtuneandsim}
\subsection{DAPsRWM tuning advice}

 Our theory predicts behaviour in terms of the theoretical scaling, $\mu$, and the variance $\sigma^2$, whereas the quantities the user wishes to tune are the actual scaling, $\lambda$, and the number of particles, $m$. Since $\lambda \propto \mu$ \eqref{eq.proposal.high.dim}, and $\sigma^2\propto 1/m$ (see the discussion following Assumptions \ref{ass.effort.vs.variance}), but with  unknown proportionality constants, we use our theory to predict $\muhat_{\textrm{da}}/\muhat_{\textrm{rwm}}=\lambdahat_{\textrm{da}}/\lambdahat_{\textrm{rwm}}$ and $\sigmahat^2_{\textrm{dapm}}/\sigmahat^2_{\textrm{pm}}=\widehat{m}_{\textrm{pm}}/\widehat{m}_{\textrm{dapm}}$.
Importantly, our tuning guidelines are based on quantities that can straightforwardly and robustly be estimated from a short MCMC trajectory.

As described in Section \ref{sec.dapmrwm} there are two possible tuning strategies for the DAPsMRWM. 
\begin{itemize}
\item Analogously to a strategy for the pseudo-marginal RWM (see
Section \ref{sect.lit.review}),  since the effect of altering the number of particles is approximately orthogonal to the effect of altering the scaling provided $\sigma^2\ge 1$, choose a number of particles that gives $\sigma^2 \ge 1$,  conditional on this tune the scaling to optimise efficiency and then, with this scaling, choose the number of particles to optimise efficiency.
\item Alternatively, given an approximately optimally tuned pseudo-marginal RWM algorithm, together with the parameters $\widehat{m}_{\textrm{pm}}$ and $\widehat{\lambda}_{\textrm{pm}}$, a single run of the DAPsMRWM with these parameters provides  $\alpha_{2|1}(\lambdahat_{\textrm{pm}},\sigmahat^2_{\textrm{pm}})$ and the value of $\eta > 0$. One may then obtain the ratios $\widehat{\lambda}_{\textrm{dapm}}/\widehat{\lambda}_{\textrm{pm}}$ and $\widehat{m}_{\textrm{pm}}/\widehat{m}_{\textrm{dapm}}$ from the tuning scatter plots in Figure \ref{fig.musigma2scatterpm.app}.
\end{itemize}

\subsection{Lotka-Volterra simulation study}
\label{sect.sim}
To illustrate the advice for the DAPsMRWM, and provide a check on its validity, we consider a Lotka-Volterra predator-prey model \cite{boys08}. The model describes the continuous time evolution of 
$\bmU_t=(U_{1,t},U_{2,t})$ where $U_{1,t}$ (prey) and $U_{2,t}$ (predator) are 
non-negative integer-values processes. Starting from an initial value, which 
is assumed known for simplicity, $\bmU_t$ evolves according to a Markov jump process 
(MJP) parameterised by rate constants $\bmc=(c_1,c_2,c_3)$; details of the state transitions are provided in Section \ref{sec.LNA.volterra}.
The process is easily simulated via the Gillespie algorithm \cite{Gillespie77} and the 
pseudo-marginal RWM scheme is straightforward to apply \cite{GolightlyWilkinson:2011}.
We assume that the MJP is observed with Gaussian error every time unit for $n$ time units, $t=1,\ldots ,n$: $Y_{1,t}\sim \Normal{u_{1,t},s_1^2}$ and 
$Y_{2,t}\sim \Normal{u_{2,t},s_2^2}$, independently.
As all of the parameters of interest must be strictly positive, we consider inference for 
\begin{equation*}
\bmx=\left(\log(c_{1}),\log(c_{2}),\log(c_{3}),\log(s_{1}),\log(s_{2})\right).
\end{equation*}
Parameter values and prior distributions are provided in Section \ref{sec.LNA.volterra}. The DAPsMRWM scheme requires a computationally cheap approximation of the MJP. We follow 
\cite{GolightlyHendersonSherlock:2013} by constructing a linear noise approximation  
(see e.g. \cite{kampen2001}),  detailed in Section \ref{sec.LNA.volterra}.

For a pseudo-marginal RWM scheme \cite{SherlockThieryRobertsRosenthal:2013} 
suggests that for a Gaussian target (where, for each principal
component, $I$ is known) proposals with a variance of $\bmV_{\textrm{Gauss}}=\left({2.56^2}/{d}\right)\times\textrm{Var}(\bmX)$ 
would be optimal. We propose Gaussian jumps with a variance
of
$\bmV_{\textrm{prop}}=\gamma^2\widehat{\bmV}_{\textrm{Gauss}}$, where 
$\textrm{Var}(\bmX)$, has been replaced with an approximation, $\widehat{\textrm{Var}}(\bmX)$, created from an initial run. If the target were in fact a high-dimensional Gaussian, and the variance approximation were exact, then $\gamma$ would correspond exactly to the theoretical scaling, $\mu/\muhat_{\textrm{pm}}$, and $\gammahat_{\textrm{pm}}$ would be $1.0$. 
We found that the pseudo-marginal RWM was optimised at $\gammahat_{\textrm{pm}}\approx 1.2$. \cite{SherlockThieryRobertsRosenthal:2013} suggests that the optimal number of particles should lead to a variance in $\log \pih$ of approximately $3.3$. We found an optimum 
of $m=180$, which occurred when the $\textrm{Var}[\log \pih(x_*)]$ 
(with $x_*$ an initial estimate of the componentwise posterior median) 
was approximately $2.9$. The mean acceptance probability at this 
optimal tuning was $\alpha_{\textrm{pm}}\approx 8.0\%$ 
and the empirical efficiency (minimum, over each parameter component, effective sample size per second) was $0.067$.

The DAPsMRWM with $\gamma=1.2$ and $m=180$ gave 
$\alpha_{2|1}\approx 20.7\%$, so that 
$\alpha_{2|1}/\alpha_{\textrm{pm}}\approx 2.6$; timing diagnostics gave 
$\eta=0.0014$. For this combination, the tuning scatter plots suggest 
increasing the scaling by a factor of around $2.0$, decreasing 
the variance by a factor of between $0.7$ and $0.8$, and that this 
should lead to an increase in efficiency of a factor of between $6$ 
and $7$. The tuning suggestions translate to 
$\gamma\approx 2.4$ and $m\approx 225-255$. 
Alternatively, Figure \ref{fig.eff.mu.sigma2} suggests 
that provided $\sigma^2>1$, $m$ and $\gamma$ may be tuned independently.

\begin{figure}
\begin{center}
\includegraphics[angle=270, width=1.0\textwidth]{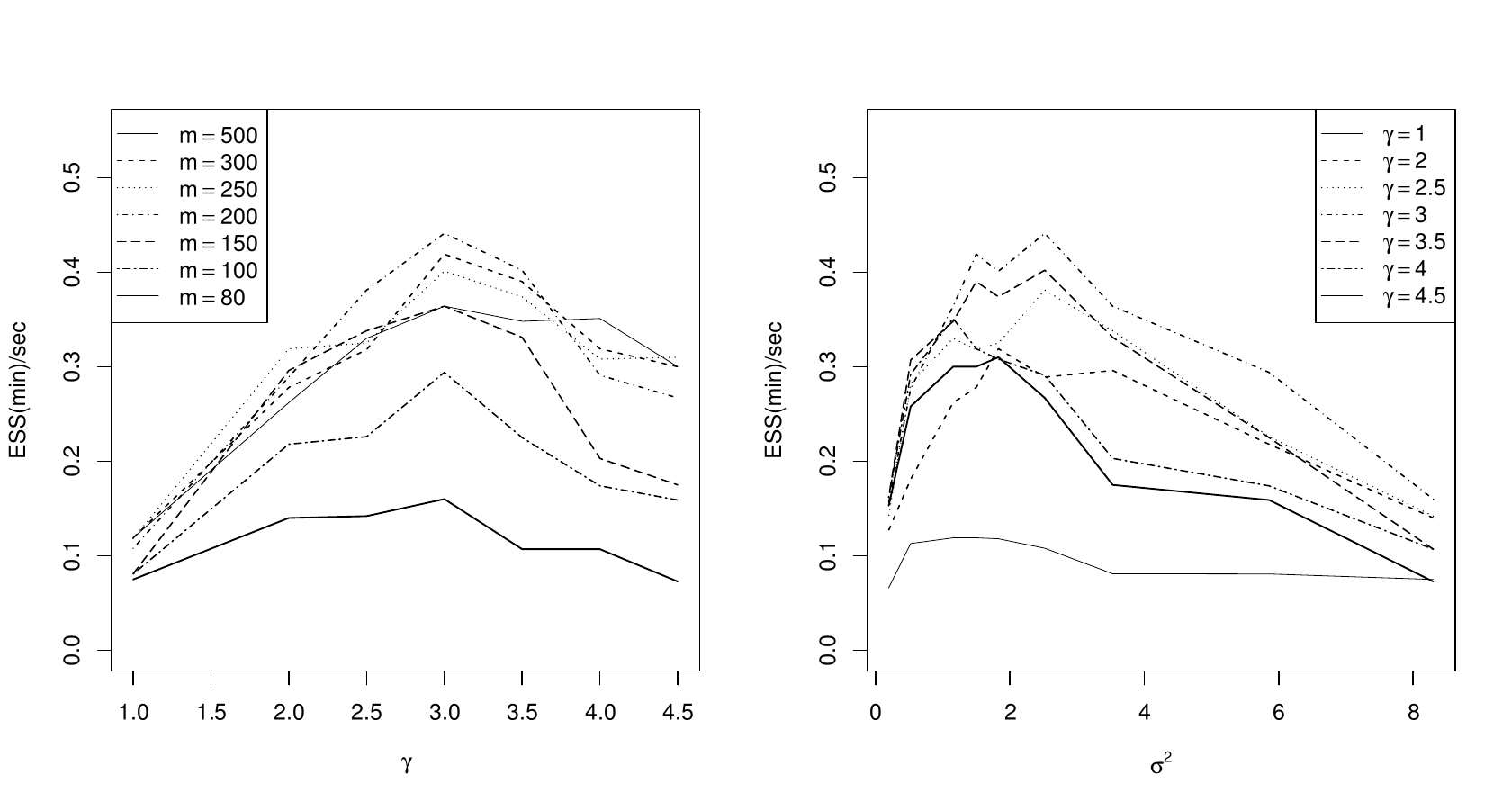}
\caption{Empirical efficiency measured as the effective sample size per CPU second. 
The left-hand panel gives the efficiency plotted against $\gamma$ for various numbers 
of particles. The right-hand panel gives the efficiency plotted against $\sigma$ (estimated 
at the posterior median), for various scalings.
\label{fig.ess}
}
\end{center}
\end{figure}

To confirm that the practical advice is reasonable and to test some of
the other predictions of our theory, the number of particles $m$ was varied between 
$80$ and $2000$ and, for each $m$, the scaling $\gamma$ was varied between $1$ and $4.5$. For each 
$(m, \gamma)$ pair, a long MCMC run (of at least $4\times 10^{5}$ iterations) was performed. Figure~\ref{fig.ess} 
shows empirical efficiency as a function of the scaling $\gamma$ (with a varying number of particles $m$) 
and as a function of $m$  (for various
$\gamma$)
and provides empirical evidence of 
the insensitivity of the optimal choice of scaling, $\gamma$, 
to the value of $\sigma^2$, for values of  $\sigma^2 \geq 0.89$;
furthermore, for
variances below $0.89$ the optimal scaling increases, as predicted by
our theory. Empirical efficiencies for each run, as well as empirical Stage-One and Conditional Stage-Two acceptance rates  are given in Table \ref{table.results}, and back up the heuristic from the figures, that 
$\gamma=2.5$ gives close to the optimal efficiency,
with $\gammahat\approx 3.1$, and $\widehat{m}\approx 220-250$
as predicted. The empirical efficiency gain from using the DAPsMRWM algorithm
compared to the pseudo-marginal RWM algorithm was $0.441/0.067\approx 6.6$, which is
in the centre of the range predicted by the theory.

\begin{table}
\begin{center}
\begin{tabular}{r|r|r|r|r|r|r|r|r|r|r}
& $m$    & $80$ &$100$ &$150$ &$200$ &$250$ &$300$ &$500$ &$800$&2000\\
$\gamma$& $\sigma^2$& $8.30$&$5.86$&$3.53$&$2.52$&$1.83$&$1.50$&$0.89$&$0.52$&$0.20$\\
\hline
\hline
1&mESS/s             &0.0750  &0.0808 &0.0810 &0.108  &0.118  &0.119  &0.119   &0.113 & 0.0661\\
&$\widehat{\alpha}_1$    &0.256   &0.255  &0.257  &0.255  &0.257  &0.254  &0.254   &0.255 & 0.258\\
&$\widehat{\alpha}_{2|1}$&0.0651  &0.0883 &0.170  &0.237  &0.289  &0.341  &0.447   &0.547 & 0.692\\
\hline
2&mESS/s             &0.140  &0.218  &0.296  &0.289  &0.319  &0.278  &0.262   &0.181 & 0.127\\
&$\widehat{\alpha}_1$    &0.0556 &0.0514 &0.0489 &0.0503 &0.0517 &0.0520 &0.0513  &0.0517& 0.0505\\
&$\widehat{\alpha}_{2|1}$&0.0619 &0.0895 &0.163  &0.213  &0.286  &0.313  &0.438   &0.522 & 0.674\\
\hline
2.5&mESS/s           &0.142  &0.226  &0.338  &0.381  &0.325  &0.318  &0.330   &0.282 & 0.142\\
&$\widehat{\alpha}_1$    &0.0244 &0.0237 &0.0234 &0.0259 &0.0264 &0.0234 &0.0241  &0.0230& 0.0250\\
&$\widehat{\alpha}_{2|1}$&0.0600 &0.0815 &0.159  &0.218  &0.252  &0.312  &0.434   &0.523 & 0.675 \\
\hline
3&mESS/s             &0.160  &0.294  &0.364  &0.441  &0.401  &0.419  &0.364  &0.277 &0.156 \\
&$\widehat{\alpha}_1$    &0.0143 &0.0123 &0.0119 &0.0114 &0.0131 &0.0120 &0.0114 &0.0124&0.0121 \\
&$\widehat{\alpha}_{2|1}$&0.0416 &0.101  &0.152  &0.233  &0.274  &0.320  &0.426  &0.516 &0.673 \\
\hline
3.5&mESS/s           &0.107  &0.225  &0.331   &0.402   &0.374   &0.390   &0.348   &0.307  &0.162  \\
&$\widehat{\alpha}_1$    &0.00629&0.00789&0.00763 &0.00684 &0.00669 &0.00663 &0.00725 &0.00634&0.00694  \\
&$\widehat{\alpha}_{2|1}$&0.0550 &0.0869 &0.170   &0.237   &0.273   &0.312   &0.424   &0.534  &0.673 \\
\hline
4&mESS/s             &0.107  &0.174  &0.176   &0.291   &0.308   &0.319   &0.351   &0.292  &0.162 \\
&$\widehat{\alpha}_1$    &0.00343&0.00318&0.00401 &0.00388 &0.00372 &0.00357 &0.00377 &0.00402&0.00418 \\
&$\widehat{\alpha}_{2|1}$&0.0680 &0.105  &0.151   &0.215   &0.287   &0.310   &0.407   &0.500  &0.681 \\
\hline
4.5&mESS/s           &0.0728 &0.159  &0.150   &0.267   &0.310   &0.300   &0.300   &0.258  &0.153  \\
&$\widehat{\alpha}_1$    &0.00220&0.00183&0.00207 &0.00247 &0.00230 &0.00256 &0.00224 &0.00249&0.00226 \\
&$\widehat{\alpha}_{2|1}$&0.0527 &0.111  &0.143   &0.213   &0.265   &0.280   &0.424   &0.491  &0.658 
\end{tabular}
\caption{Minimum effective sample size (mESS) per second, stage 1 acceptance probability $\widehat{\alpha}_{1}$ 
and stage 2 acceptance probability $\widehat{\alpha}_{2|1}$ as functions of the number of particles $m$ and scaling $\gamma$. The variance 
($\sigma^{2}$) of the estimated log-posterior at the median is also shown for each choice of $m$.
\label{table.results}}
\end{center}
\end{table}

Proposition
\ref{prop.acc.rates.dec} proves that, subject to assumptions, the Stage 
2 acceptance probability decreases as the variance in the
log-posterior ($\sigma^2$) increases and the Stage 1 acceptance
probability decreases as the scaling increases; Table \ref{table.results} shows that these
patterns are observed in our experiments. 

As with standard MCMC, the samples from tuning runs could be combined with the sample from the run at the optimal parameter values to decrease the variance of any estimator still further.

\subsection{Lotka-Volterra details}
\label{sec.LNA.volterra}
The Lotka-Volterra MJP is characterised by transitions over $(t,t+dt]$ of the form
\begin{eqnarray*}
\Prob{U_{1,t+dt}=u_{1,t}+1,U_{2,t+dt}=u_{2,t}|u_{1,t},u_{2,t}} &=& c_{1}u_{1,t}dt+o(dt),\\
\Prob{U_{1,t+dt}=u_{1,t}-1,U_{2,t+dt}=u_{2,t}+1|u_{1,t},u_{2,t}} &=& c_{2}u_{1,t}u_{2,t}dt+o(dt),\\
\Prob{U_{1,t+dt}=u_{1,t},U_{2,t+dt}=u_{2,t}-1|u_{1,t},u_{2,t}} &=& c_{3}u_{2,t}dt+o(dt).
\end{eqnarray*}
Data were simulated using an initial value $\bmu_0=(71,79)$ for $n=50$ time units 
with $\bmc=(1.0,0.005,0.6)$ and $s_1=s_2=8$. The parameters $\bmx=(\log c_1, \log c_2, \log c_3, \log s_1, \log s_2)$ were assumed 
to be independent \emph{a priori} with proper Uniform densities on the interval 
$[-8,8]$ ascribed to $X_{i}$, ($i=1,\ldots,5$).

Under the linear noise approximation (LNA) we have that $\bmU_{t} \dist \Normal{\bmz_t+\bmm_t\,,\,\bmV_t}$ where $\bmz_t$, $\bmm_t$ and $\bmV_t$ satisfy a coupled ODE system
\begin{equation}\label{LNAode}
\left\{\begin{array}{lll}
\dot{\bmz}_t&=&\bmS\,\bmh(\bmz_t,\bmc) \\
\dot{\bmm}_t&=&\bmF_t\bmm_t\\
\dot{\bmV}_t&=&\bmV_t\bmF_{t}^{T}+\bmS\textrm{diag}\left\{\bmh(\bmz_t,\bmc)\right\}\bmS^T+\bmF_t\bmV_t
\end{array}\right.
\end{equation}
For the Lotka-Volterra model, the rate vector $\bmh(\bmz_t,\bmc)$, stoichiometry matrix $\bmS$ and Jacobian matrix $\bmF_t$ 
are given by $\bmh(\bmz_t,\bmc)=(c_{1}z_{1,t},c_{2}z_{1,t}z_{2,t},c_{3}z_{2,t})$ as well as
\begin{equation*}
\bmS=\left(\begin{array}{ccc}1&-1&0\\0&1&-1\end{array}\right),\quad \textrm{and} \quad
\bmF_t=\left(\begin{array}{cc}c_{1}-c_{2}z_{2,t}&-c_{2}z_{1,t}\\c_{2}z_{2,t}&c_{2}z_{1,t}-c_{3}\end{array}\right).
\end{equation*}
We now describe an algorithm for evaluating the posterior (up to proportionality) 
under the LNA. For further details regarding the LNA and its use as an approximation to a MJP, we 
refer the reader to \cite{fearnhead12} and \cite{GolightlyHendersonSherlock:2013}. 
For simplicity of exposition we assume an observation regime of the form
$\bmY_{t}=\bmU_{t}+\bmepsilon_{t}$ with $\bmepsilon_{t}\sim \Normal{0,\bmSigma}$ where $\bmepsilon_{t}$ is a length-$d_x$ Gaussian random vector. Suppose that 
$\bmU_{1}$ is fixed at some value $\bmu_{1}$. The marginal likelihood (and hence the 
posterior up to proportionality) under the LNA, $\pi_a(\bmy_{1:n}|\bmx)$ can be obtained as follows.
\begin{enumerate}
\item Initialisation. Compute 
$
\pi_a(\bmy_{1}|\bmx)=\phi\left(\bmy_{1}\,;\, \bmu_{1}\,,\,\bmSigma\right)
$
where $\phi\left(\bmy_{1}\,;\, \bmu_{1}\,,\,\bmSigma\right)$ denotes the Gaussian density 
with mean vector $\bmu_{1}$ and variance matrix $\bmSigma$. 
Set $\bma_{1}=\bmu_{1}$ and $\bmC$ to be the $d_{x}\times d_{x}$ matrix of zeros.
 
\item For times $t=1,2,\ldots ,n-1$,
\begin{itemize}
\item[(a)] Prior at $t+1$. Initialise the LNA with $\bmz_{t}=\bma_{t}$, $\bmm_{t}=0$ and $\bmV_{t}=C_{t}$. 
Note that $\bmm_{s}=\bmzero$ for all $s>t$. Integrate the ODE system (\ref{LNAode}) 
forward to $t+1$ to obtain $\bmz_{t+1}$ and $\bmV_{t+1}$. Hence
$\bmX_{t+1}|\bmy_{1:t}\sim \Normal{\bmz_{t+1},\bmV_{t+1}}\,.$
\item[(b)] One-step forecast. Using the observation equation, we have that 
$
\bmY_{t+1}|\bmy_{1:t}\sim \Normal{\bmz_{t+1},\bmV_{t+1}+\bmSigma}
$. Compute
$
\pi_{a}(\bmy_{1:t+1}|\bmx)=\pi_{a}(\bmy_{1:t}|\bmx)\,\phi\left(\bmy_{t+1}\,;\, \bmz_{t+1}\,,\,\bmV_{t+1}+\bmSigma\right)
$.
\item[(c)] Posterior at $t+1$. Combining the distributions in (a) and (b) gives 
$\bmU_{t+1}|\bmy_{1:t+1}\sim \Normal{\bma_{t+1},\bmC_{t+1}}$ where
$\bma_{t+1} = \bmz_{t+1}+\bmV_{t+1} \left( \bmV_{t+1}+\bmSigma\right)^{-1}\left(\bmy_{t+1}-\bmz_{t+1}\right) $
and
$
\bmC_{t+1} = \bmV_{t+1}-\bmV_{t+1}  (\bmV_{t+1}+\bmSigma)^{-1}\bmV_{t+1}
$.
\end{itemize}
\end{enumerate}



\end{document}